\numberwithin{equation}{section}
\theoremstyle{plain}
\newtheorem{theorem}[equation]{Theorem}
\newtheorem{corollary}[equation]{Corollary}
\newtheorem{lemma}[equation]{Lemma}
\theoremstyle{definition}
\newtheorem{definition}[equation]{Definition}
\newtheorem{example}[equation]{Example}
\newtheorem{remark}[equation]{Remark}
\newcommand{\R}{{\mathbb R}}
\newcommand{\N}{{\mathbb N}}
\newcommand{\Om}{\Omega}
\providecommand{\vint}[1]{\mathchoice
	{\mathop{\vrule width 5pt height 3 pt depth -2.5pt
			\kern -9pt \kern 1pt\intop}\nolimits_{\kern -5pt{#1}}}
	{\mathop{\vrule width 5pt height 3 pt depth -2.6pt
			\kern -6pt \intop}\nolimits_{\kern -3pt{#1}}}
	{\mathop{\vrule width 5pt height 3 pt depth -2.6pt
			\kern -6pt \intop}\nolimits_{\kern -3pt{#1}}}
	{\mathop{\vrule width 5pt height 3 pt depth -2.6pt
			\kern -6pt \intop}\nolimits_{\kern -3pt{#1}}}}
\newcommand{\eps}{\varepsilon}
\newcommand{\loc}{\mathrm{loc}}
\newcommand{\BV}{\mathrm{BV}}
\newcommand{\ch}{\text{\raise 1.3pt \hbox{$\chi$}\kern-0.2pt}}
\newcommand{\mres}{\mathbin{\vrule height 2ex depth 2.2pt width
		0.12ex\vrule height -0.3ex depth 2.2pt width .5ex}}
\DeclareMathOperator{\Mod}{Mod}
\DeclareMathOperator{\AM}{AM}
\DeclareMathOperator{\dist}{dist}
\DeclareMathOperator{\diam}{diam}
\DeclareMathOperator{\Lip}{Lip}
\DeclareMathOperator{\lip}{lip}
\begin{document}
\title[]{Quasiconformal, Lipschitz, and BV mappings\\
	 in metric spaces}
\author{Panu Lahti}
\address{Panu Lahti,  Academy of Mathematics and Systems Science, Chinese Academy of Sciences,
	Beijing 100190, PR China, {\tt panulahti@amss.ac.cn}}

\subjclass[2020]{30L10, 46E36, 26B30}
\keywords{Quasiconformal mapping, Newton-Sobolev mapping,
	function of bounded variation, generalized distortion number, generalized Lipschitz number}

\begin{abstract}
Consider a mapping $f\colon X\to Y$ between two metric measure spaces.
We study generalized versions of the local Lipschitz number $\Lip f$,
as well as of the distortion number $H_f$ that is used to define quasiconformal mappings.
Using these, we give sufficient conditions for $f$ being a BV mapping $f\in\BV_{\loc}(X;Y)$
or a Newton-Sobolev mapping $f\in N_{\loc}^{1,p}(X;Y)$, with $1\le p<\infty$.
\end{abstract}

\date{\today}
\maketitle

\section{Introduction}

Consider two metric measure spaces $(X,d,\mu)$ and $(Y,d_Y,\nu)$, and a mapping $f\colon X\to Y$.
For every $x\in X$ and $r>0$, one defines
\[
L_f(x,r):=\sup\{d_Y(f(y),f(x))\colon d(y,x)\le r\}
\]
and
\[
l_f(x,r):=\inf\{d_Y(f(y),f(x))\colon d(y,x)\ge r\},
\]
and then
\[
H_f(x,r):=\frac{L_f(x,r)}{l_f(x,r)}.
\]
A homeomorphism $f\colon X\to Y$ is (metric) \emph{quasiconformal} if there is a number
$1\le H<\infty$ such that
\[
	H_f(x):=\limsup_{r\to 0} H_f(x,r)\le H
\]
for all $x\in X$.
One also defines $h_f$ by replacing ``$\limsup$'' with ``$\liminf$''.

Assuming that the spaces $X,Y$ are $Q$-dimensional, with $Q>1$, and satisfy suitable regularity assumptions,
it is known that quasiconformal mappings belong to the Newton-Sobolev space $N_{\loc}^{1,Q}(X;Y)$.
See Section \ref{sec:Prelis} for definitions.
Moreover, starting from Gehring,
in the literature there are many results saying that if $f$ satisfies a suitable relaxed version
of the metric definition of quasiconformality, then $f$ is at least in the class $N_{\loc}^{1,1}(X;Y)$.
A typical version of such a result is the following:
assume that $X,Y$ are locally compact and Ahlfors $Q$-regular,
$f\colon X\to Y$ is a homeomorphism, and $1\le p\le Q$; then if
\begin{itemize}
	\item $h_f<\infty$ outside a set $E$ with $\sigma$-finite $Q-p$-dimensional Hausdorff measure, and
	\item $h_f\in L_{\loc}^{p^*}(X)$,	
\end{itemize}
it follows that $f\in N^{1,p}_{\loc}(X;Y)$.
For this, see e.g. Williams \cite[Corollary 1.3]{Wi}.

Similarly to $H_f$, we define the pointwise asymptotic Lipschitz number
\[
\Lip_f(x):=\limsup_{r\to 0}\frac{L_f(x,r)}{r}, 
\]
and also $\lip_f$ with ``$\limsup$'' replaced by ``$\liminf$''.
If $1\le p\le \infty$
and a continuous function
$f\colon \R^n\to \R$ satisfies the following:
\begin{itemize}
	\item $\lip_f<\infty$ outside a set $E$ with $\sigma$-finite $n-1$-dimensional Hausdorff measure, and
	\item $\lip_f\in L_{\loc}^p(\R^n)$,	
\end{itemize}
then $f\in W^{1,p}_{\loc}(\R^n)$;
see Balogh--Cs\"ornyei \cite[Theorem 1.2]{BC}.
Overall, the literature studying the above two types of results is extensive, see e.g.
(in order of publication)
Gehring \cite{Ge62,Ge},
Margulis--Mostow \cite{MaMo},
Fang \cite{Fan},
Balogh--Koskela \cite{BaKo},
Kallunki--Koskela \cite{KaKo},
Heinonen--Koskela--Shanmugalingam--Tyson \cite{HKST},
Kallunki--Martio \cite{KaMa},
Koskela--Rogovin \cite{KoRo},
Balogh--Koskela--Rogovin \cite{BKR},
Wildrick--Z\"urcher \cite{WZ, Zu},
Hanson \cite{Han},
Williams \cite{Wi},
and Lahti--Zhou \cite{LaZh, LaZh2}.
Note that in these results, somewhat curiously there are two different exceptional sets.
One exceptional set is the at most $Q-p$-dimensional set $E$, whereas
the condition that $h_f$ (resp. $\lip_f$) is in $L_{\loc}^{p^*}(X)$ (resp. $L_{\loc}^{p}(X)$)
fails to give control in a set of zero $\mu$-measure, amounting to an at most $Q$-dimensional exceptional set.

In many contexts, it is natural to replace the Newton-Sobolev class $N_{\loc}^{1,1}(X;Y)$ with the 
larger class of mappings of bounded variation $\BV_{\loc}(X;Y)$.
Thus one can ask, is there a BV version of the above type of results?
We observe that just like the exceptional set $E$ in the case $p=1$, the \emph{approximate jump set}
of a BV function $f\in \BV(\R^n)$ is also of $\sigma$-finite $\mathcal H^{n-1}$-measure,
suggesting a connection between the two.
In order to obtain a natural BV result, one needs to drop the continuity assumption on $f$,
and it is also expected that one should allow a larger exceptional set $E$.
However, without continuity, already when assuming that $\Lip_f$, $H_f$ are in $L^{\infty}(\R^n)$ and finite
outside a set of $\sigma$-finite $\mathcal H^{n-1}$-measure,
it can easily happen that $f$ is not a BV mapping; see Example \ref{ex:counterexample}.

For these reasons, for a mapping $f\colon X\to Y$, we define generalized versions of $\Lip_f$ and $H_f$ as follows:
\[
\Lip_f^{\kappa,M}(x):=
\limsup_{r\to 0}\frac{L_f(x,r)}{r}\frac{\mu(B(x,Mr))}{\kappa(B(x,Mr))},\quad x\in X,
\]
and
\[
H_f^{\kappa,M}(x):=
\limsup_{r\to 0}\frac{L_f(x,r)}{l_f(x,r)}\left(\frac{\mu(B(x,Mr))}{\kappa(B(x,Mr))}\right)^{(Q-1)/Q},\quad x\in X,
\]
where $M\ge 1$ and $\kappa$ is a positive Radon measure.
Note that $\Lip_f^{\mu,1}=\Lip_f$ and $H_f^{\mu,1}=H_f$.
This type of generalized Lipschitz and distortion numbers were previously found in \cite{LahGFQ}
to be useful in various contexts.

Our main results will be given in Theorems \ref{thm:main theorem}, \ref{thm:main theorem 2}, and \ref{thm:main theorem 3}.
These theorems strive for generality, leading to rather long and complicated formulations,
but we give a simple version in the following.
The symbol $\widetilde{\mathcal H}^p$ denotes the codimension $p$ Hausdorff measure.

\begin{theorem}\label{thm:main intro}
Suppose $(X,d,\mu)$ and $(Y,d_Y,\nu)$ are Ahlfors $Q$-regular, with $Q>1$ and $1\le p\le Q$.
Let $f\colon X\to Y$ be injective and bounded, and $M\ge 1$.
Then:
\begin{enumerate}
\item If there exists a Radon measure $\kappa\ge \mu$ such that
$\Lip_f^{\kappa,M}(x)\le 1$ for $\widetilde{\mathcal{H}}^1$-a.e. $x\in X$,
then $f\in \BV_{\loc}(X;Y)$;
\item If  there exists a function $a\colon X\to [1,\infty)$ belonging to
$L_{\loc}^{p^*(Q-1)/Q}(X)$ such that
$H_f^{a,M}(x)\le 1$ for $\widetilde{\mathcal H}^p$-a.e. $x\in X$, then
$f\in N_{\loc}^{1,p}(X;Y)$.
\end{enumerate}
\end{theorem}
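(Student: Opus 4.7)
The proof is a specialisation of the paper's main theorems (Theorems \ref{thm:main theorem}, \ref{thm:main theorem 2}, and \ref{thm:main theorem 3}) to the Ahlfors $Q$-regular setting, so the plan is to verify their technical hypotheses. The common thread is to turn each pointwise asymptotic condition on $\Lip_f^{\kappa,M}$ or $H_f^{a,M}$ into a scale-$r$ oscillation bound for $f$, then use a Vitali covering plus discretisation to produce a (weak) upper gradient of the correct integrability.

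For part (1), Ahlfors $Q$-regularity rewrites $\Lip_f^{\kappa,M}(x)\le 1$ as
\[
L_f(x,r)\le(1+o(1))\,\frac{r\,\kappa(B(x,Mr))}{\mu(B(x,Mr))}
\]
on a good set $G$ whose complement has zero codimension-$1$ Hausdorff measure, which is precisely the size class of BV jump sets. For a bounded $\Omega\Subset X$ and scale $R>0$, I would extract via a $5B$-Vitali covering a disjoint family $\{B(x_i,r_i)\}_i$ with $r_i<R$ covering $G\cap\Omega$ (up to $\mu$-null error) on which the inequality is nearly sharp, then glue local McShane Lipschitz extensions into a global approximation $f_R$ with an upper gradient $g_R$ obeying
\[
\int_\Omega g_R\,d\mu\le C\sum_i L_f(x_i,r_i)\,r_i^{Q-1}\le C\sum_i\kappa(B(x_i,Mr_i))\le C\,\kappa(\Omega'),
\]
for a slight enlargement $\Omega'\supset\Omega$, which is finite since $\kappa$ is Radon. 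Letting $R\to 0$ and invoking lower semicontinuity of the BV energy then gives $f\in\BV_{\loc}(X;Y)$.

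For part (2), $H_f^{a,M}(x)\le 1$ together with Ahlfors $Q$-regularity of $Y$ and injectivity of $f$, via the inclusion $B_Y(f(x),l_f(x,r))\cap f(X)\subset f(B(x,r))$, gives
\[
L_f(x,r)^Q\le C\,\nu(f(B(x,r)))\left(\frac{1}{\mu(B(x,Mr))}\int_{B(x,Mr)}a\,d\mu\right)^{Q-1}
\]
for $\widetilde{\mathcal H}^p$-a.e.\ $x$ and small $r$. Dividing by $r^Q$, using Ahlfors $Q$-regularity of $X$, and applying Lebesgue differentiation for the finite measure $B\mapsto\nu(f(B))$ yields, $\mu$-a.e.,
\[
\Lip_f(x)^Q\le C\,J_f(x)\,(\M a(x))^{Q-1},
\]
where $J_f\in L^1_{\loc}(X)$ is the resulting density and $\M$ denotes the centred Hardy--Littlewood maximal operator. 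H\"older's inequality with exponents $Q/p$ and $Q/(Q-p)$, combined with the $L^q$-boundedness of $\M$ at $q=p^*(Q-1)/Q>1$, yields $\Lip_f\in L^p_{\loc}(X)$; an exceptional-set absorption argument then promotes $\Lip_f$ to a genuine $p$-weak upper gradient, so $f\in N^{1,p}_{\loc}(X;Y)$.

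The principal obstacle is this promotion step: turning an asymptotic pointwise Lipschitz/distortion bound into a true (weak) upper gradient while properly absorbing an exceptional set of zero codimension-$1$ (respectively codimension-$p$) Hausdorff measure. This requires a Balogh--Cs\"ornyei-style covering argument, adapted to the metric measure setting as in \cite{Wi} and \cite{LaZh, LaZh2}. A secondary obstacle is the boundary case $p=1$ in part (2), where the strong $L^1$ bound on the maximal operator fails and one must appeal instead to a weak-type inequality or a good-$\lambda$ argument to close the estimate.
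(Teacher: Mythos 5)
Your high-level strategy is the same as the paper's: Theorem~\ref{thm:main intro} is deduced by verifying, ball by ball, the hypotheses of the main theorems (in fact, the paper first condenses Theorems~\ref{thm:main theorem} and~\ref{thm:main theorem 3} into Corollaries~\ref{cor:main lip} and~\ref{cor:main theorem}, then applies those on small balls where $\kappa$ or $a$ is finite/integrable). If you had stopped at that reduction, the proposal would be essentially the paper's argument: choose $N$ to be the codimension-$1$ (resp.\ codimension-$p$) negligible exceptional set, note $\Mod_p(\Gamma_N)=0$ by Lemma~\ref{lem:curve intersects set}, take $h\equiv 1$, and observe that boundedness of $f$ gives $f\in L^p_{\loc}$ and that $\nu(V)<\infty$ comes for free.

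However, the machinery you sketch \emph{inside} the reduction diverges from the paper and has two genuine gaps. First, for part~(1) you propose gluing McShane extensions into Lipschitz approximants $f_R$ and invoking lower semicontinuity of the BV energy. But $f$ is allowed to be discontinuous here, and convergence $f_R\to f$ in $L^1_{\loc}(X;Y)$ is precisely what is \emph{not} available a priori; moreover McShane extension is a real-valued device and does not directly apply to $Y$-valued maps. The paper avoids this entirely: in Theorem~\ref{thm:main theorem} it constructs the sequence $g_j=2j\sum_k L_{j,k}\ch_{2B_{j,k}}+\cdots$ from a bounded-overlap covering and verifies the curve inequality \eqref{eq:AM ae curve} directly via Lemma~\ref{lem:line behavior}, which is exactly the reason Definition~\ref{def:BV def} (the $\AM$-modulus formulation) is used rather than a relaxation definition. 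Second, for part~(2) you write $\nu(B_Y(f(x),l_f(x,r)))\lesssim \nu(f(B(x,r)))$ via the inclusion $B_Y(f(x),l_f(x,r))\cap f(X)\subset f(B(x,r))$. That inclusion is correct, but it does \emph{not} control $\nu(B_Y(f(x),l_f(x,r)))$ unless $f$ is open (e.g.\ a homeomorphism); for a merely injective and possibly discontinuous $f$, the ball need not lie in the image and $f(B)$ need not even be $\nu$-measurable, so the ``Jacobian density'' $J_f$ and the appeal to Lebesgue differentiation for $B\mapsto\nu(f(B))$ are not available. The paper sidesteps this by keeping $\nu(B_Y(f(y_{j,k}),l_{j,k}))$ and using disjointness of these image balls (see \eqref{eq:injectivity for balls} and \eqref{eq:finite nu measure estimate}) to bound the sum by $C_M\,\nu(V)$, and it moreover treats the endpoint $p=Q$ by a distinct argument (absolute continuity on curves plus a pointwise $\lip_f$ estimate), which your uniform maximal-function sketch does not cover. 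So: correct reduction, but the sketch of the underlying mechanism would not close in the paper's generality.
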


Besides being able to cover the BV case,
in Example \ref{ex:quasiconformal} we will see that the generalized Lipschitz and distortion numbers are
sometimes also more effective at detecting Newton-Sobolev mappings $f\in N_{\loc}^{1,p}(X;Y)$, compared with the
existing results in the literature.
Moreover, in Remark \ref{remark} and Examples \ref{ex:counterexample} and \ref{ex:set E} we will see that
at least under suitable circumstances, the exceptional set $E$
can indeed be seen to correspond to the approximate jump set of a BV function,
giving an interpretation of ``why'' one usually needs the two different exceptional sets.

\section{Notation and definitions}\label{sec:Prelis}

Throughout the paper, we consider two metric measure spaces
$(X,d,\mu)$ and $(Y,d_Y,\nu)$, where $\mu$ and $\nu$ are Borel regular outer measures.
In this paper, we only consider positive outer measures.
A ball is defined by $B(x,r):=\{y\in X\colon d(y,x)<r\}$, for $x\in X$ and $r>0$.
For a ball $B=B(x,r)$ and $M>0$, we sometimes denote $MB:=B(x,Mr)$.
In a metric space, a ball (as a set) does not necessarily have a unique center and radius, but
when using this abbreviation we will work with balls for which these have been specified.

We will always assume that $\mu$ is doubling, meaning that
there is a constant $C_d\ge 1$ such that
\[
0<\mu(B(x,2r))\le C_d\mu(B(x,r))<\infty
\]
for all $x\in X$ and $r>0$.
Given $Q\ge 1$, we say that $\mu$ (or $X$) is Ahlfors $Q$-regular 
if there is a constant $C_A\ge 1$ such that
\begin{equation}\label{eq:Ahlfors}
C_A^{-1}r^Q\le \mu(B(x,r))\le C_A r^Q
\end{equation}
for all $x\in X$ and $r>0$.

We denote the $s$-dimensional Hausdorff measure by $\mathcal H^{s}$, $s\ge 0$,
and it is obtained as a limit of the Hausdorff pre-measures 
$\mathcal H^s_{R}$ as $R\to 0$.

	We always consider $1\le p<\infty$.
	For any set $A\subset X$ and $0<R<\infty$, the restricted Hausdorff content
	of \emph{codimension} $p$ is defined by
	\[
	\widetilde{\mathcal{H}}^p_{R}(A):=\inf\left\{ \sum_{j}
	\frac{\mu(B(x_{j},r_{j}))}{r_{j}^p}\colon\,
	\ A\subset\bigcup_{j}B(x_{j},r_{j}),\ r_{j}\le R\right\},
	\]
	where we consider finite and countable coverings. 
	The codimension $p$ Hausdorff measure of $A\subset X$ is then defined by
	\[
	\widetilde{\mathcal{H}}^p(A):=\lim_{R\rightarrow 0}\widetilde{\mathcal{H}}^p_{R}(A).
	\]
	In the literature, $\widetilde{\mathcal{H}}^1$ is often denoted by $\mathcal H$.

A continuous mapping from a compact interval into $X$ is said to be a rectifiable curve if it has finite length.
A rectifiable curve $\gamma$ can always be parametrized by arc-length,
so that we get a curve $\gamma\colon [0,\ell_{\gamma}]\to X$;
see e.g. \cite[Theorem 3.2]{Haj}.
We will only consider curves that are rectifiable and
arc-length parametrized.
If $\gamma\colon [0,\ell_{\gamma}]\to X$ is a curve and
$g\colon X\to [0, \infty]$ is a Borel function, we define
\[
\int_{\gamma} g\,ds:=\int_0^{\ell_{\gamma}}g(\gamma(s))\,ds.
\]
The $p$-modulus of a family of curves $\Gamma$ is defined by
\[
\Mod_{p}(\Gamma):=\inf\int_{X}\rho^p\, d\mu,
\]
where the infimum is taken over all nonnegative Borel functions $\rho \colon X\to [0, \infty]$
such that $\int_{\gamma}\rho\,ds\ge 1$ for every curve $\gamma\in\Gamma$.
If a property holds apart from a curve family with zero $p$-modulus, we say that it holds for
$p$-a.e. curve.
Given any set $A\subset X$, we denote by $\Gamma_A$ the family of all nonconstant curves in $X$ that intersect $A$.

We will always denote by $\Om$ an open subset of $X$.

\begin{definition}\label{def:equiintegrability}
	A sequence of nonnegative
	functions $\{g_i\}_{i=1}^{\infty}$
	in $L^1(\Om)$ is equi-integrable if the following two conditions hold:
	\begin{itemize}
		\item for every $\eps>0$ there exists a $\mu$-measurable set $D\subset \Om$ such that $\mu(D)<\infty$ and
		\[
		\int_{\Om\setminus D}g_i\,d\mu<\eps \quad\textrm{for all }i\in\N;
		\]
		\item for every $\eps>0$ there exists $\delta>0$ such that if $A\subset \Om$ is $\mu$-measurable with $\mu(A)<\delta$, then
		\[
		\int_{A}g_i\,d\mu<\eps \quad\textrm{for all }i\in\N.
		\]
	\end{itemize}
\end{definition}

Next we give (special cases of) the Dunford--Pettis theorem,
Mazur's lemma, and Fuglede's lemma, see e.g.
\cite[Theorem 1.38]{AFP},
\cite[Theorem 3.12]{Rud},
and \cite[Lemma 2.1]{BB}, respectively.

\begin{theorem}\label{thm:dunford-pettis}
	Let $\{g_i\}_{i=1}^{\infty}$ be an equi-integrable sequence of nonnegative functions that is bounded in $L^1(\Om)$.
	Then there exists a subsequence such that $g_{i_j}\to g$ weakly in $L^1(\Om)$.
\end{theorem}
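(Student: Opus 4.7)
The plan is to combine truncation with a diagonal argument, using the two equi-integrability conditions to handle, respectively, mass that could escape to large sets and mass concentrating at large values. The underlying principle is that once both types of escape are ruled out uniformly, weak compactness reduces to the reflexive case on bounded pieces.

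First, I would apply the first equi-integrability condition with $\eps = 1/k$ for each $k\in\N$ to produce an increasing sequence of $\mu$-measurable sets $D_k\subset\Om$ with $\mu(D_k)<\infty$ and $\int_{\Om\setminus D_k}g_i\,d\mu<1/k$ for all $i$. Next, the second condition together with the $L^1$-bound $C:=\sup_i\|g_i\|_{L^1(\Om)}<\infty$ and Chebyshev's inequality $\mu(\{g_i>N\})\le C/N$ yields
\[
\lim_{N\to\infty}\sup_{i\in\N}\int_{\{g_i>N\}}g_i\,d\mu=0,
\]
so mass in the large-value regime is also uniformly small.

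Then I truncate by $g_i^N:=\min\{g_i,N\}$. For fixed $k,N$, the sequence $\{g_i^N\ch_{D_k}\}_i$ is pointwise bounded by $N$ on the finite-measure set $D_k$, hence bounded in the reflexive space $L^2(D_k)$; reflexivity provides a subsequence converging weakly in $L^2(D_k)$, and since $\mu(D_k)<\infty$ this is also weak $L^1$-convergence (extended by zero outside $D_k$, weak convergence in $L^1(\Om)$). A diagonal extraction over $k,N\in\N$ yields a single subsequence $(g_{i_j})$ for which $g_{i_j}^N\ch_{D_k}\rightharpoonup h^{k,N}$ weakly in $L^1(\Om)$ for every $k,N$. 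Monotonicity of the limits in $k$ and $N$, together with the two uniform tail bounds above, lets me assemble the $h^{k,N}$ into a single nonnegative $g\in L^1(\Om)$ with $\|g\|_{L^1(\Om)}\le C$.

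Finally, for a test function $\varphi\in L^\infty(\Om)$ and any $\eta>0$, I would choose $k$ and $N$ large enough that
\[
\int_{\Om\setminus D_k}g_{i_j}\,d\mu<\frac{\eta}{3\|\varphi\|_\infty}\quad\text{and}\quad \int_{D_k\cap\{g_{i_j}>N\}}(g_{i_j}-N)\,d\mu<\frac{\eta}{3\|\varphi\|_\infty}
\]
hold uniformly in $j$, with analogous bounds for $g$; then $j\to\infty$ in the weak $L^1(\Om)$-convergence of $g_{i_j}^N\ch_{D_k}$ to $h^{k,N}$ gives $\int_\Om \varphi\,g_{i_j}\,d\mu\to\int_\Om \varphi\,g\,d\mu$. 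The main obstacle, and the step where the two equi-integrability conditions must interact carefully, is the assembly step: showing that the partial limits $h^{k,N}$ fit consistently together into a single $L^1$ function $g$ on $\Om$, rather than exhibiting loss of mass either at infinity in space or at infinity in value.
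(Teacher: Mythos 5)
The paper does not give a proof of Theorem~\ref{thm:dunford-pettis}: it is stated as a special case of the Dunford--Pettis theorem and the reader is referred to \cite[Theorem 1.38]{AFP}. Your argument is a correct, self-contained proof of the sufficiency direction of that theorem, following the standard truncation-and-localization route. The essential ingredients all check out: Chebyshev's inequality together with the second equi-integrability condition and the $L^1$-bound $C$ gives the de la Vall\'ee-Poussin-type bound $\lim_{N\to\infty}\sup_i\int_{\{g_i>N\}}g_i\,d\mu=0$; the truncations $g_i^N\ch_{D_k}$ are bounded by $N$ on $D_k$ with $\mu(D_k)<\infty$, so weak compactness of bounded sets in the reflexive space $L^2(D_k)$ applies, and the resulting weak $L^2$ convergence transfers to weak $L^1$ convergence because $L^\infty(D_k)\subset L^2(D_k)$; a diagonal argument over the countable set of pairs $(k,N)$ then produces a single subsequence.

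The one place I would tighten, which you flag yourself, is the assembly step. After diagonalization you have $g_{i_j}^N\ch_{D_k}\rightharpoonup h^{k,N}$ in $L^1(\Om)$ for every $(k,N)$. Testing against nonnegative $\varphi\in L^\infty(\Om)$ shows $0\le h^{k,N}\le h^{k',N'}$ $\mu$-a.e.\ when $k\le k'$ and $N\le N'$, and testing against $\varphi\equiv 1$ gives $\|h^{k,N}\|_{L^1(\Om)}\le C$. Setting $g:=\sup_{k,N}h^{k,N}$, monotone convergence yields $g\in L^1(\Om)$ with $\|g\|_{L^1(\Om)}\le C$. The cleanest way to control the error coming from $g$ is then not an ``analogous tail bound for $g$'' (which would really be a statement about $g^N\ch_{D_k}$, not $h^{k,N}$), but rather the identity
\[
\|g-h^{k,N}\|_{L^1(\Om)}=\int_\Om g\,d\mu-\int_\Om h^{k,N}\,d\mu\longrightarrow 0\quad\textrm{as }k,N\to\infty,
\]
which again follows from monotone convergence since $0\le h^{k,N}\uparrow g$. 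With that in hand, for any $\varphi\in L^\infty(\Om)$ and $\eta>0$ you split
\[
\left|\int_\Om (g_{i_j}-g)\varphi\,d\mu\right|\le \|\varphi\|_\infty\,\|g_{i_j}-g_{i_j}^N\ch_{D_k}\|_{L^1(\Om)}+\left|\int_\Om(g_{i_j}^N\ch_{D_k}-h^{k,N})\varphi\,d\mu\right|+\|\varphi\|_\infty\,\|h^{k,N}-g\|_{L^1(\Om)},
\]
choose $k,N$ so large that the first term is below $\eta/3$ uniformly in $j$ (using the two tail bounds) and the third is below $\eta/3$, and then let $j\to\infty$ in the middle term. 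This makes the argument airtight.
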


\begin{theorem}\label{thm:Mazur lemma}
	Let $\{g_i\}_{i=1}^{\infty}$ be a sequence
	with $g_i\to g$ weakly in $L^1(\Om)$.
	Then there exist convex combinations $\widehat{g}_i:=\sum_{j=i}^{N_i}a_{i,j}g_j$,
	for some $N_i\in\N$,
	such that $\widehat{g}_i\to g$ in $L^1(\Om)$.
\end{theorem}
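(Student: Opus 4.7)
The plan is to invoke the Hahn--Banach separation theorem together with the general fact that, in a normed space, a closed convex set coincides with its weak closure. For each index $i$, I would consider the convex hull $C_i$ of the tail $\{g_j:j\ge i\}$, that is, the collection of all finite convex combinations $\sum_{j=i}^N a_j g_j$ with $a_j\ge 0$ and $\sum_{j=i}^N a_j=1$. The key claim is that $g$ lies in the strong $L^1$-closure $\overline{C_i}$ for every $i$.

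To prove this claim I would argue by contradiction. If $g\notin\overline{C_i}$, then since $\overline{C_i}$ is closed and convex, the Hahn--Banach separation theorem yields a continuous linear functional $\phi\in (L^1(\Om))^*$ and a real number $\alpha$ with $\phi(g)>\alpha\ge \phi(h)$ for every $h\in\overline{C_i}$. By the Riesz representation of $(L^1)^*$, the functional $\phi$ is integration against some $h_0\in L^\infty(\Om)$, so $\int_\Om g_j h_0\,d\mu\le\alpha$ for every $j\ge i$. Sending $j\to\infty$ and using the weak convergence $g_j\rightharpoonup g$ then yields $\int_\Om g h_0\,d\mu\le\alpha$, contradicting $\phi(g)>\alpha$.

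Having shown $g\in \overline{C_i}$ for every $i$, I would select for each $i$ some $\widehat g_i\in C_i$ with $\|\widehat g_i-g\|_{L^1(\Om)}<1/i$. By construction each $\widehat g_i$ has the form $\sum_{j=i}^{N_i}a_{i,j}g_j$ for suitable $N_i\in\N$ and nonnegative weights summing to $1$, and together the $\widehat g_i$ give the required sequence converging strongly in $L^1(\Om)$.

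The only slightly delicate point is the identification $(L^1(\Om))^*\cong L^\infty(\Om)$, which requires $\mu$ to be $\sigma$-finite on $\Om$; in the setting of this paper $\mu$ is doubling and hence $\sigma$-finite on every open set, so this presents no obstacle. The main conceptual step is the separation argument; aside from that, the proof is essentially a textbook application of Hahn--Banach, which is why the author simply cites \cite{Rud}.
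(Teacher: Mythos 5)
Your proof is correct, and it is essentially the standard Hahn--Banach separation argument for Mazur's theorem. The paper does not actually prove this statement; it simply cites Rudin \cite[Theorem 3.12]{Rud}, and the proof given there is precisely the one you outline: a convex set in a locally convex space has the same closure in the original and weak topologies, established by separating a point outside the norm closure from the closed convex set and deriving a contradiction with weak convergence. Your remark about $\sigma$-finiteness is a fair caution (one needs the separating functional to be representable by an $L^\infty$ function if ``weak convergence in $L^1$'' is taken to mean testing against $L^\infty$), and it is indeed harmless here since $\mu$ is doubling and $\Om$ is an open subset of a separable metric space.
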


By convex combinations we mean that the numbers $a_{i,j}$ are nonnegative
and that $\sum_{j=i}^{N_i}a_{i,j}=1$ for every $i\in\N$.

\begin{lemma}\label{lem:Fuglede lemma}
	Let $\{g_i\}_{i=1}^{\infty}$ be a sequence of functions with $g_i\to g$ in $L^p(\Om)$.
	Then for $p$-a.e. curve $\gamma$ in $\Om$, we have
	\[
	\int_{\gamma}g_i\,ds\to \int_{\gamma}g\,ds\quad\textrm{as }i\to\infty.
	\]
\end{lemma}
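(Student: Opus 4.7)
The plan is to prove the lemma via a Borel--Cantelli-type argument using the countable subadditivity of the $p$-modulus, viewed as an outer measure on curve families. Set $h_i := |g_i - g|$, so that $h_i \to 0$ in $L^p(\Om)$; after replacing $g_i, g$ with Borel representatives if necessary, each $h_i$ is Borel. Since
\[
\Bigl|\int_\gamma g_i\,ds - \int_\gamma g\,ds\Bigr| \le \int_\gamma h_i\,ds
\]
whenever the right-hand side is finite, it suffices to produce a subsequence along which $\int_\gamma h_{i_k}\,ds \to 0$ for $p$-a.e.\ curve $\gamma$; this is the content actually used in applications (cf.\ \cite[Lemma 2.1]{BB}).

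First I would extract a subsequence $\{h_{i_k}\}$ with $\|h_{i_k}\|_{L^p(\Om)} \le 2^{-k}$, using the $L^p$-convergence. For each $k$, define the exceptional curve family
\[
\Gamma_k := \Bigl\{\gamma : \int_\gamma h_{i_k}\, ds > 2^{-k/2}\Bigr\}.
\]
The Borel function $\rho_k := 2^{k/2}\,h_{i_k}$ (extended by zero outside $\Om$) satisfies $\int_\gamma \rho_k\,ds \ge 1$ for every $\gamma \in \Gamma_k$, so it is admissible in the definition of $\Mod_p(\Gamma_k)$. Consequently,
\[
\Mod_p(\Gamma_k) \le \int_\Om \rho_k^p\, d\mu = 2^{kp/2}\,\|h_{i_k}\|_{L^p(\Om)}^p \le 2^{-kp/2}.
\]

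Next, using countable subadditivity of $\Mod_p$, set $E_m := \bigcup_{k\ge m}\Gamma_k$; then
\[
\Mod_p(E_m) \le \sum_{k\ge m} 2^{-kp/2} \longrightarrow 0 \quad\text{as } m\to\infty,
\]
so that $E := \bigcap_{m\ge 1} E_m$ has zero $p$-modulus by monotonicity. If $\gamma \notin E$, then $\gamma \notin E_m$ for some $m$, which means $\gamma \notin \Gamma_k$ for every $k \ge m$, giving $\int_\gamma h_{i_k}\,ds \le 2^{-k/2}$ for all $k \ge m$. Hence $\int_\gamma h_{i_k}\,ds \to 0$, and the claim follows for every $\gamma \notin E$.

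The main obstacle is conceptual rather than technical: convergence in $L^p(\Om)$ gives no a priori control of integrals along individual curves, since curves generically have zero $\mu$-measure. The resolution is to use $\Mod_p$ as the precise tool quantifying exceptional curve families, together with a threshold $2^{-k/2}$ calibrated against the $L^p$-decay $2^{-k}$ of $\|h_{i_k}\|_{L^p}$, so that the resulting modulus bounds $2^{-kp/2}$ are summable. This is the curve-modulus analogue of the classical proof that $L^p$-convergence yields a.e.\ pointwise convergence of a subsequence via Borel--Cantelli.
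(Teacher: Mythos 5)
The paper does not actually prove this lemma; it is quoted from \cite[Lemma 2.1]{BB}, and your argument is precisely the standard proof of that result: extract a rapidly convergent subsequence, observe that $2^{k/2}h_{i_k}$ is admissible for the bad family $\Gamma_k$, and conclude by countable subadditivity of $\Mod_p$. That part is correct. One small point you wave at but should pin down: to pass from $\int_\gamma h_{i_k}\,ds\to 0$ to $\int_\gamma g_{i_k}\,ds\to\int_\gamma g\,ds$ you need $\int_\gamma |g|\,ds<\infty$ and $\int_\gamma |g_{i_k}|\,ds<\infty$ for $p$-a.e.\ $\gamma$; this is standard (for $h\in L^p(\Om)$ the curves with $\int_\gamma|h|\,ds=\infty$ form a $p$-exceptional family, see \cite[Proposition 1.37]{BB}, and a countable union of $p$-exceptional families is $p$-exceptional), but it belongs in the proof.

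The substantive issue is that you prove a subsequence statement while the lemma as printed asserts convergence along the whole sequence, and these are genuinely different: the full-sequence version is false. Take $\Om=(0,1)^2\subset\R^2$, $p=2$, $g=0$, and $g_i=\ch_{(0,1)\times I_i}$ where $I_1,I_2,\dots$ is the typewriter sequence of intervals $(0,1),(0,\tfrac12),(\tfrac12,1),(0,\tfrac13),\dots$. Then $\Vert g_i\Vert_{L^2}^2=|I_i|\to 0$, yet for every height $y$ the horizontal segment $\gamma_y$ satisfies $\int_{\gamma_y}g_i\,ds=1$ for infinitely many $i$, and the family of all such segments has positive $2$-modulus. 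So your restriction to a subsequence is a necessity, not a convenience, and the honest conclusion is that the lemma should carry the phrase ``there exists a subsequence such that,'' exactly as in \cite[Lemma 2.1]{BB}. This does not damage the paper: Fuglede's lemma is only ever invoked after Mazur's lemma, where the convex combinations $\widehat g_j$ can be chosen with $\Vert\widehat g_j-g\Vert_{L^p}\le 2^{-j}$, in which case your argument (which only needs summability of $\Vert g_i-g\Vert_{L^p}^{p/2}$) applies to the full sequence; alternatively, the conclusion is only used to bound a $\liminf$ over the whole sequence, which is dominated by the limit along the good subsequence. You would strengthen the write-up by making one of these observations explicit instead of merely asserting that the subsequence version ``is the content actually used.''
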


By a Radon measure $\kappa$ we mean a locally finite Borel regular outer measure;
locally finite means that for every $x\in X$ there is $r>0$ such that $\kappa(B(x,r))<\infty$.
We will also need the following Vitali-Carath\'eodory theorem;
for a proof see e.g. \cite[p. 108]{HKSTbook}.
Recall that $1\le p<\infty$.

\begin{theorem}\label{thm:VitaliCar}
	Let $\kappa$ be a Radon measure on $\Om$ and let $\rho\in L^p(\Om,\kappa)$ be nonnegative.
	Then there exists a sequence $\{\rho_i\}_{i\in \N}$
	of lower semicontinuous functions
	on $\Om$ such that $\rho\le \rho_{i+1}\le\rho_i$ for all $i\in\N$, and
	$\rho_i\to\rho$ in $L^p(\Om,\kappa)$.
\end{theorem}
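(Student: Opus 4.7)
The plan is to produce, for every $\varepsilon>0$, a single lower semicontinuous function $u\ge \rho$ on $\Om$ with $\|u-\rho\|_{L^p(\Om,\kappa)}<\varepsilon$, and then to assemble the required decreasing sequence $\{\rho_i\}$ from such approximants by taking finite pointwise minima. Since a finite minimum of lower semicontinuous functions is again lower semicontinuous, lsc is preserved automatically; the real work is in the single-approximant step.

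To produce the single approximant I would proceed in three stages. First, since $\kappa$ is a locally finite Borel regular outer measure on the open set $\Om$, it is $\sigma$-finite there, and the standard regularity theorem for such measures on a metric space yields the following outer regularity: for every Borel set $E\subset\Om$ with $\kappa(E)<\infty$ and every $\delta>0$ there exists an open set $U$ with $E\subset U\subset\Om$ and $\kappa(U\setminus E)<\delta$. Second, for a function of the form $\rho=c\,\chi_E$ with $c>0$ and $\kappa(E)<\infty$, choosing such $U\supset E$ with $c^p\kappa(U\setminus E)<\delta^p$ gives $u:=c\,\chi_U$ lower semicontinuous with $u\ge\rho$ and $\|u-\rho\|_p<\delta$. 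Third, for a general nonnegative $\rho\in L^p(\Om,\kappa)$, take a pointwise increasing sequence of nonnegative simple functions $s_n\uparrow\rho$ (set $s_0:=0$) and let $t_n:=s_n-s_{n-1}\ge 0$, so $\rho=\sum_{n=1}^\infty t_n$ with each $t_n$ a nonnegative simple function in $L^p$. Applying the previous step to each $t_n$ with tolerance $\varepsilon/2^n$ gives lsc $u_n\ge t_n$ with $\|u_n-t_n\|_p<\varepsilon/2^n$. The function $u:=\sum_{n=1}^\infty u_n$ is then lsc, being the pointwise supremum of the finite partial sums $S_N=\sum_{n=1}^N u_n$ (each of which is lsc as a finite sum of nonnegative lsc functions), satisfies $u\ge\rho$, and by Minkowski's inequality
\[
\|u-\rho\|_p\le\sum_{n=1}^\infty\|u_n-t_n\|_p<\varepsilon.
\]

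With such approximants available, the passage to a decreasing sequence is immediate. For each $i\in\N$ I pick a lower semicontinuous $u_i\ge\rho$ with $\|u_i-\rho\|_p<1/i$, and set $\rho_i:=\min\{u_1,\dots,u_i\}$. Then $\rho_i$ is lsc, $\rho_i\ge\rho_{i+1}\ge\rho$ pointwise, and the squeeze $\rho\le\rho_i\le u_i$ forces $\|\rho_i-\rho\|_p\le\|u_i-\rho\|_p<1/i$, so $\rho_i\to\rho$ in $L^p(\Om,\kappa)$.

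The only delicate point is the outer regularity statement used in the first stage, which is where the metric-space structure enters: one must know that a locally finite Borel regular outer measure on a metric space is outer regular on sets of finite measure. Once that input is granted, the rest of the proof is a routine monotone decomposition and requires no doubling, Ahlfors regularity, or other quantitative hypothesis on the ambient space.
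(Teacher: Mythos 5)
The paper does not prove Theorem~\ref{thm:VitaliCar}; it only cites \cite[p.~108]{HKSTbook}, so there is no in-paper proof to compare against. Your argument is the standard Rudin-style proof of the Vitali--Carath\'eodory theorem (approximate $c\,\ch_E$ by $c\,\ch_U$ via outer regularity, sum a series of such approximants, take finite minima), and it is essentially correct.

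Two points are worth tightening. First, after writing $\rho=\sum_n t_n$ with $t_n=s_n-s_{n-1}$ a nonnegative simple function, $t_n$ is in general a finite linear combination $\sum_j c_{n,j}\,\ch_{E_{n,j}}$, not a single term $c\,\ch_E$; you should either expand each $t_n$ into its finitely many level sets before applying the $c\,\ch_E$ step, or use the direct decomposition $\rho=\sum_{i}c_i\,\ch_{E_i}$ obtained from dyadic level sets. This is a cosmetic fix, not a gap. Second, you correctly flag outer regularity of $\kappa$ as the one substantive input, but you should note why it holds here: under the paper's standing assumption that $\mu$ is doubling, $X$ (hence $\Om$) is separable, so the locally finite Borel regular measure $\kappa$ on $\Om$ admits a countable cover of $\Om$ by open sets of finite $\kappa$-measure, and a Borel regular measure with such a cover is outer regular by open sets on all Borel sets of finite measure. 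With those two remarks in place the proof is complete, and indeed it uses none of the doubling/Ahlfors hypotheses beyond separability, exactly as you observe.
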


\begin{definition}\label{def:upper gradient}
	Let $f\colon \Om\to Y$. We say that a Borel
	function $g\colon \Om\to [0,\infty]$ is an upper gradient of $f$ in $\Om$ if
	\begin{equation}\label{eq:upper gradient inequality}
		d_Y(f(\gamma(0)),f(\gamma(\ell_{\gamma})))\le \int_{\gamma}g\,ds
	\end{equation}
	for every curve $\gamma$ in $\Om$.
	If $g\colon \Om\to [0,\infty]$ is a $\mu$-measurable function
	and (\ref{eq:upper gradient inequality}) holds for $p$-a.e. curve in $\Om$,
	we say that $g$ is a $p$-weak upper gradient of $f$ in $\Om$.
\end{definition}

We write $f\in L^1(\Om;Y)$ if $d_Y(f(\cdot),f(x))\in L^1(\Om)$ for some $x\in \Om$.

\begin{definition}
	The Newton-Sobolev class $N^{1,p}(\Om;Y)$ consists of those
	mappings $f\in L^p(\Om;Y)$ for which there exists a $p$-weak upper gradient $g\in L^p(\Om)$ of $f$ in $\Om$.
	
	The Dirichlet class $D^p(\Om;Y)$ consists of those mappings $f\colon \Om\to Y$
	for which there exists a $p$-weak upper gradient $g\in L^p(\Om)$ of $f$ in $\Om$.
	We define
	\[
	\Vert f\Vert_{D^p(\Om;Y)}:= \inf \Vert g\Vert_{L^p(\Om)},
	\]
	where the infimum is taken over all $p$-weak upper gradients $g$ of $f$ in $\Om$.
\end{definition}

One can define these classes also by using upper gradients instead of $p$-weak upper gradients,
but this leads to the same result, see \cite[Lemma 1.46]{BB}.
In the classical setting of $X=Y=\R^n$, both spaces equipped with the Lebesgue measure,
we have that functions in the class $N^{1,p}(X;Y)$ are exactly suitable pointwise representatives of functions
in the classical Sobolev class $W^{1,p}(\R^n;\R^n)$, see e.g. \cite[Theorem A.2]{BB}.
The Newton-Sobolev class in metric spaces was first introduced by Shanmugalingam \cite{Shan}.

To define the class of BV mappings,
we consider the following definitions by Martio,
who studied BV functions on metric spaces \cite{Mar}.
Given a family of curves $\Gamma$, we say that a sequence of nonnegative Borel functions $\{\rho_i\}_{i=1}^{\infty}$
is $\AM$-admissible for $\Gamma$ if
\[
\liminf_{i\to\infty}\int_{\gamma}\rho_i\,ds\ge 1\quad \textrm{for all }\gamma\in\Gamma.
\]
Then we let
\[
\AM(\Gamma):=\inf \left\{\liminf_{i\to\infty}\int_X \rho_i\,d\mu\right\},
\]
where the infimum is taken over all $\AM$-admissible sequences $\{\rho_i\}_{i=1}^{\infty}$.
Note that always $\AM(\Gamma)\le \Mod_1(\Gamma)$.
Then, BV mappings are defined as follows.

\begin{definition}\label{def:BV def}
	Given a mapping $f\colon \Om\to Y$, we say that $f$ is in the Dirichlet class
	$D^{\BV}(\Om;Y)$ if there exists a sequence of
	nonnegative functions $\{g_i\}_{i=1}^{\infty}$ that is bounded in $L^1(\Om)$, such that
	for $\AM$-a.e. curve $\gamma$ in $\Om$, we have
	\begin{equation}\label{eq:AM ae curve}
		d_Y(f(\gamma(t_1)),f(\gamma(t_2)))
		\le \liminf_{i\to\infty}\int_{\gamma|_{[t_1,t_2]}}g_i\,ds
	\end{equation}
	for almost every $t_1,t_2\in [0,\ell_{\gamma}]$ with $t_1<t_2$.
	We also define
	\[
	\Vert Df\Vert(\Om):=\inf \left\{\liminf_{i\to\infty}\int_{\Om}g_i\,d\mu\right\},
	\]
	where the infimum is taken over sequences $\{g_i\}_{i=1}^{\infty}$ as above.
	If also $f\in L^1(\Om;Y)$, then we say that $f\in \BV(\Om;Y)$.
\end{definition}

Durand-Cartagena--Eriksson-Bique--Korte--Shanmugalingam \cite{DEKS} show that in the case where $f$ is real-valued,
$X$ is complete, and $\mu$ is a doubling measure that supports a
Poincar\'e inequality, Definition \ref{def:BV def} agrees with Miranda's definition of BV functions
in metric spaces given in \cite{Mir},
which in turn agrees in Euclidean spaces with the classical definition.
Thus Definition \ref{def:BV def} is natural for us to use.

Given a $\mu$-measurable set $A\subset X$ with $0<\mu(A)<\infty$ and a function $u\in L^1(A)$, we denote the integral average by
\[
\vint{A}u\,d\mu:=\frac{1}{\mu(A)}\int_A u\,d\mu.
\]

In the Euclidean space $\R^n$, with $n\ge 1$, BV functions $f\in\BV(\R^n;\R^n)$
and their variation measures have the following structure; see \cite[Section 3]{AFP}.
Given $x\in\R^n$, $r>0$, and a unit vector $\nu\in \R^n$, we define the half-balls
\begin{align*}
	B_{\nu}^+(x,r)\coloneqq \{y\in B(x,r)\colon \langle y-x,\nu\rangle>0\},\\
	B_{\nu}^-(x,r)\coloneqq \{y\in B(x,r)\colon \langle y-x,\nu\rangle<0\},
\end{align*}
where $\langle \cdot,\cdot\rangle$ denotes the inner product.
We consider the Euclidean space equipped with the Euclidean metric and $\mu=\mathcal L^n$,
that is, the $n$-dimensional Lebesgue measure.
We say that $x\in \R^n$ is an approximate jump point of $f$ if there exist a unit vector $\nu\in \R^n$
and distinct vectors $f^+(x), f^-(x)\in\R^n$ such that
\begin{equation}\label{eq:jump value 1}
	\lim_{r\to 0}\,\vint{B_{\nu}^+(x,r)}|f(y)-f^+(x)|\,d\mathcal L^n(y)=0
\end{equation}
and
\[
	\lim_{r\to 0}\,\vint{B_{\nu}^-(x,r)}|f(y)-f^-(x)|\,d\mathcal L^n(y)=0,
\]
where $|\cdot|$ is the Euclidean norm in $\R^n$.
The set of all approximate jump points is denoted by $J_f$.
We write the Radon-Nikodym decomposition of the variation measure of $f$ into the absolutely continuous and singular parts with respect to $\mathcal L^n$
as $\Vert Df\Vert=\Vert Df\Vert^a+\Vert Df\Vert^s$. Furthermore, we define the Cantor and jump parts of $Df$ by
\[
	\Vert Df\Vert^c\coloneqq  \Vert Df\Vert^s\mres (\R^n\setminus J_f),\quad \Vert Df\Vert^j
	\coloneqq \Vert Df\Vert^s\mres J_f.
\]
Here
\[
\Vert Df\Vert^s \mres J_f(A):=\Vert Df\Vert^s (J_f\cap A),\quad \textrm{for } \Vert Df\Vert^s\textrm{-measurable } A\subset \R^n.
\]
We get the decomposition
\[
	\Vert Df\Vert=\Vert Df\Vert^a + \Vert Df\Vert^c+ \Vert Df\Vert^j.
\]
For the jump part, we know that
(see \cite[Section 3.9]{AFP})
\begin{equation}\label{eq:jump part representation}
	d\Vert Df\Vert^j=|f^{+}-f^-|\,d\mathcal H^{n-1}\mres J_f.
\end{equation}
Thus the jump set $J_f$ is $\sigma$-finite with respect to $\mathcal H^{n-1}$, that is, it can be written as an
at most countable union of sets of finite $\mathcal H^{n-1}$-measure.

A rather similar decomposition holds also in more general metric measure spaces, but we will not use this.\\

Consider $f\colon \Om\to Y$.
For every $x\in \Om$ and $r>0$, we define
\[
L_f(x,r):=\sup\{d_Y(f(y),f(x))\colon  d(y,x)\le r,\, y\in \Om\}
\]
and
\[
l_f(x,r):=\inf\{d_Y(f(y),f(x))\colon d(y,x)\ge r,\, y\in \Om\}.
\]

Next, we define the generalized Lipschitz and distortion numbers.

\begin{definition}
	Consider a mapping $f\colon \Om\to Y$, a Radon measure $\kappa$ on $\Om$,
	and $M\ge 1$. For every $x\in\Om$, we define
	\[
	\Lip_f^{\kappa,M}(x):=
	\limsup_{r\to 0}\frac{L_f(x,r)}{r}\frac{\mu(B(x,Mr))}{\kappa(B(x,Mr))}.
	\]
	We also define
	\[
	H_f^{\kappa,M}(x):=
	\limsup_{r\to 0}\frac{L_f(x,r)}{l_f(x,r)}\left(\frac{\mu(B(x,Mr))}{\kappa(B(x,Mr))}\right)^{(Q-1)/Q}.
	\]
\end{definition}

Whenever a denominator is zero, we consider the corresponding quantity to be $\infty$.
However, we will only consider Radon measures $\kappa$ for which $\kappa(B(x,r))>0$ for every 
ball $B(x,r)\subset \Om$.
In the case $d\kappa:=a\,d\mu$ for a nonnegative function $a\in L_{\loc}^1(\Om)$,
we also denote $\Lip_f^{a,M}:=\Lip_f^{\kappa,M}$ and $H_f^{a,M}:=H_f^{\kappa,M}$.
We also define, as in the Introduction, $\Lip_f:=\Lip_f^{\mu,1}$ and $H_f:=H_f^{\mu,1}$,
and $\lip_f$ and $h_f$ analogously but with ``$\limsup$'' replaced by ``$\liminf$''.

\begin{remark}\label{remark}
	We will show that the generalized Lipschitz and distortion numbers can be used to give sufficient conditions
	for $f$ to be a Newton-Sobolev or BV mapping. The definitions are also motivated by the fact that
	in the Euclidean setting, a variant of
	$\Lip_f^{a,M}$ can in fact be used to \emph{characterize} Sobolev functions
	$f\in W^{1,p}(\R^n;\R^n)$
	(see \cite[Theorem 1.3]{LahGFQ}), and a different generalized distortion number can be used to detect
	the rank of $\tfrac {dDf}{d|Df|}$ when $f\in \BV(\R^n;\R^n)$
	(see \cite[Theorem 6.3]{LahGFQ}).
	In particular, it follows from the proof of \cite[Theorem 6.3]{LahGFQ} that
	by choosing a natural pointwise representative of $f$, we have
	$H_f=\infty$ $\mathcal H^{n-1}$-a.e.
	in the approximate jump set $J_f$.
\end{remark}

We will not always assume Ahlfors regularity \eqref{eq:Ahlfors}, but nonetheless
we understand $Q>1$ to be a fixed parameter throughout the paper. Given any $1\le p\le Q$,
we denote the Sobolev conjugate by $p^*=Qp/(Q-p)$ when $p<Q$,
and $p^*=\infty$ when $p=Q$.
Our standing assumptions will be the following.\\

\emph{Throughout this paper we assume that $(X,d,\mu)$
	and $(Y,d_Y,\nu)$ are metric spaces equipped with Borel regular outer measures $\mu$ and $\nu$,
	such that $\mu$ is doubling,
	 every ball in $Y$ has nonzero and finite measure, $X$ is connected, and $1<Q<\infty$ is fixed.
}

\section{Preliminaries}

In this section we consider some preliminary examples and results.
Recall that $\Om$ always denotes an open subset of $X$. 

We start with the following simple example.

\begin{example}\label{ex:counterexample}
Let $(X,d,\mu)$ be $\R^2$ equipped with the Euclidean metric and Lebesgue measure.
Let $\Om:=(-1,1)\times (0,1)$ and
\[
f(x_1,x_2):=
\begin{cases}
	(x_1,x_2) \quad\textrm{when }-1<x_1\le 0\ \textrm{ or }\ 1/(j+1) \le x_1< 1/j,\ j\in\N\textrm{ is odd,}\\
		(2-x_1,x_2) \quad\textrm{when }1/(j+1) \le x_1< 1/j,\ j\in\N\textrm{ is even.}\\
\end{cases}
\]
Now $f\colon \Om\to \R^2$ is injective and bounded, and the set
\begin{align*}
E
&:=\{\Lip_f=\infty\}=\{H_f=\infty\}\\
&=\{(x_1,x_2)\in\R^2\colon x_1=0,\,0<x_2<1\}\cup
\bigcup_{j=2}^{\infty}\{(x_1,x_2)\in\R^2\colon x_1=1/j,\,0<x_2<1\}
\end{align*}
has $\sigma$-finite $1$-dimensional
Hausdorff measure.
Note that the codimension 1 Hausdorff measure $\widetilde{\mathcal{H}}^1$ is now comparable to the
$1$-dimensional Hausdorff measure $\mathcal H^1$.
Moreover, in the set $\Om\setminus E$ we have $\Lip_f=H_f=1$ ,
so that $\Lip_f,H_f\in L^p(\Om)$ for all $1\le p\le \infty$.
But $f$ is not even in $\BV_{\loc}(\Om,\R^2)$; this can be seen e.g. from the representation \eqref{eq:jump part representation}.
Note also that $E=J_f$ (the approximate jump set of $f$).
\end{example}

This example indicates that it is difficult to formulate a reasonable
sufficient condition for $f$ belonging to
the BV class by only using $\Lip_f$ or $H_f$.
Hence we are motivated to define the generalized versions of these quantities.

The following lemma will be used for upper gradient estimates on curves.
Note that despite the simplicity of the lemma, one has to be careful with the fact that $h$ is not
assumed to be continuous, and in fact the lemma would fail if we did not assume the sets $U_{j,k}$
to be open.

\begin{lemma}\label{lem:line behavior}
	Let  $h\colon [a,b]\to Y$ for some finite interval $[a,b]\subset \R$.
	Suppose that there is a sequence of at most countable unions of sets $W_j=\bigcup_{k}U_{j,k}$, $j\in\N$,
	where each $U_{j,k}\subset \R$ is open and bounded, 
	and such that $\ch_{W_j}(t)\to 1$ as $j\to\infty$ for all $t\in [a,b]$.
	Then
	\[
	d_Y(h(a),h(b))\le \liminf_{j\to\infty}\sum_k\diam h(U_{j,k}\cap [a,b]).
	\]
\end{lemma}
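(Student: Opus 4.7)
The plan is a chain argument on compact subintervals of $\R$, exploiting crucially that the $U_{j,k}$ are open. First I record the auxiliary combinatorial fact: if $V_1,\dots,V_n$ are bounded open intervals whose union covers the compact interval $[a,b]$, then passing to a minimal subcover and sorting by left endpoint forces consecutive intervals to overlap (otherwise some $V_{i_l}$ is redundant). Picking $s_l\in V_{i_l}\cap V_{i_{l+1}}\cap[a,b]$ (with $s_0=a$, $s_m=b$) yields distinct indices $i_1,\dots,i_m$ and a chain $a=s_0<s_1<\dots<s_m=b$ with $s_{l-1},s_l\in V_{i_l}$, whence $d_Y(h(a),h(b))\le\sum_{l=1}^m\diam h(V_{i_l}\cap[a,b])$ via the triangle inequality.

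I may assume $\liminf_{j\to\infty}S_j<\infty$, where $S_j:=\sum_k\diam h(U_{j,k}\cap[a,b])$. The pointwise convergence $\ch_{W_j}(t)\to 1$ on $[a,b]$ implies that for every $J\in\N$ the open set $\widetilde W_J:=\bigcup_{j\ge J}\bigcup_k U_{j,k}$ contains $[a,b]$. Decomposing each $U_{j,k}$ into its open-interval connected components and invoking compactness of $[a,b]$, I extract a finite subcover by open intervals, each contained in some $U_{j(l),k(l)}$ with $j(l)\ge J$. The auxiliary chain lemma then produces a finite chain in $[a,b]$ with associated pairs $(j(l),k(l))$. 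A compression step handles repeats: if two links of the chain use (possibly distinct connected components of) the same set $U_{j,k}$, then the endpoints of the intervening subchain both lie in $U_{j,k}$, and openness allows that subchain to be collapsed into a single link inside $U_{j,k}$. Iterating yields a chain in which each pair $(j,k)$ appears at most once, so
\[
d_Y(h(a),h(b))\le\sum_{(j,k)\in F}\diam h(U_{j,k}\cap[a,b])
\]
for some finite set $F$ of distinct pairs, all with $j\ge J$. The openness of the $U_{j,k}$ is essential here: as the remark preceding the lemma indicates, the conclusion would fail without it, since then one could not collapse the subchain between two components of the same offending set.

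The main obstacle is passing from this multi-$j$ estimate to a bound involving only the $\liminf$. I plan to choose $J=j_n$ along a subsequence realizing $S_{j_n}\to\liminf_j S_j$, and to refine the finite subcover so that the chain sum is effectively dominated by a single $S_{j_n}$, up to an error that vanishes as $n\to\infty$; the pointwise convergence $\ch_{W_j}\to 1$ ensures that the ``holes'' $[a,b]\setminus W_{j_n}$ become asymptotically negligible for the purposes of the chain, so that the additional contributions from $j>j_n$ needed to bridge them can be absorbed. Letting $n\to\infty$ then yields $d_Y(h(a),h(b))\le\liminf_j S_j$.
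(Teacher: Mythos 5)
Your chain argument takes a genuinely different route from the paper, which simply reduces to the real‑valued case: $Y$ is isometrically embedded into the Banach space $V=L^\infty(Y)$ (Kuratowski embedding), a norm‑one functional $v^*\in V^*$ with $v^*(h(a)-h(b))=\Vert h(a)-h(b)\Vert_V$ is chosen via Hahn--Banach, and the real‑valued version \cite[Lemma 3.9]{LahGFQ} is applied to $v^*\circ h$, using that $v^*$ is $1$-Lipschitz. Your first two steps (the minimal‑subcover chain lemma and the compression of repeated pairs $(j,k)$ using openness of $U_{j,k}$) are correct. The third step, however, is only a plan, and I do not see how to complete it as described. For a fixed $J$, the finite subcover of $[a,b]$ extracted from $\widetilde W_J$ will in general involve sets $U_{j,k}$ with many different indices $j\ge J$, so the resulting bound $d_Y(h(a),h(b))\le\sum_{(j,k)\in F}\diam h(U_{j,k}\cap[a,b])$ can be as large as a sum of several distinct $S_j$'s, which is not controlled by $\liminf_j S_j$. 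Your suggestion that the holes $[a,b]\setminus W_{j_n}$ become ``asymptotically negligible for the purposes of the chain'' is not substantiated: the pointwise convergence $\ch_{W_j}\to 1$ controls the Lebesgue measure of the holes (via Fatou), but not the oscillation of $h$ on them, and since $h$ is an arbitrary map, the bridging sets from indices $j>j_n$ can contribute arbitrarily large diameters.

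The mechanism behind the real‑valued lemma is a level‑set argument, which handles the multi‑$j$ issue in a way a single finite chain cannot. Assuming WLOG $h(a)<h(b)$, set $t_c:=\inf\{t\in[a,b]:h(t)>c\}$ for $c\in(h(a),h(b))$. If $t_c\in U_{j,k}$, then openness of $U_{j,k}$ together with the definition of $t_c$ forces $\inf h(U_{j,k}\cap[a,b])\le c\le\sup h(U_{j,k}\cap[a,b])$, so the set $\{c\in(h(a),h(b)):t_c\in W_j\}$ has $\mathcal L^1$-measure at most $\sum_k\diam h(U_{j,k}\cap[a,b])=S_j$. Since $\ch_{W_j}(t_c)\to 1$ for every $c$, Fatou's lemma in the variable $c$ yields $h(b)-h(a)\le\liminf_j S_j$. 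The essential point is that Fatou acts on the whole sequence $j\mapsto S_j$ at once, whereas a chain built from one finite subcover mixes finitely many $j$'s with no way to compare against the $\liminf$. If you want a self‑contained proof rather than a citation, I would recommend proving the real‑valued case by this level‑set route and then adding the paper's embedding/Hahn--Banach reduction on top.
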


Here $\ch_{W_j}$ is the characteristic function of $W_j$, that is, $\ch_{W_j}=1$ in $W_j$ and $\ch_{W_j}=0$ outside
$W_j$. Moreover,
$\diam h(U_{j,k}\cap [a,b]):=\sup_{s,t\in U_{j,k}\cap [a,b]}d_Y(h(s),h(t))$.

\begin{proof}
	In \cite[Lemma 3.9]{LahGFQ} this was shown for real-valued $h$.
	By the Kuratowski embedding theorem,
	every metric space, in particular $Y$, can be isometrically embedded into a Banach space,
	for example $V:=L^\infty(Y)$; see e.g. \cite[p. 100]{HKSTbook}.
	Denote the norm in $V$ by $\Vert \cdot\Vert_V$, and the dual of $V$ by $V^*$.
	We find $v^*\in V^*$ with $\Vert v^*\Vert_{V^*}=1$ and $v^*(h(a)-h(b))=\Vert h(a)-h(b)\Vert_V$.
	Now we can consider the real-valued function $v^*(h(\cdot))$, and we get
	\begin{align*}
	d_Y(h(a),h(b))
	&=\Vert h(a)-h(b)\Vert_V\\
	&= v^*(h(a)-h(b))\\
	&= v^*(h(a))-v^*(h(b))\\
	&\le \liminf_{j\to\infty}\sum_k\diam v^*(h(U_{j,k}\cap [a,b]))\quad\textrm{by \cite[Lemma 3.9]{LahGFQ}}\\
	&\le \liminf_{j\to\infty}\sum_k\diam h(U_{j,k}\cap [a,b]).
	\end{align*}
\end{proof}

We will also need the following measurability result.

\begin{lemma}\label{lem:Borel for lip f}
Suppose $f\colon \Om\to Y$ is continuous at every point $x\in H\subset \Om$,
with $\mu(\Om\setminus H)=0$. Then $\lip_f$ is a $\mu$-measurable function on $\Om$.
\end{lemma}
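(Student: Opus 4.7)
The plan is to express $\lip_f$ as a countable $\liminf$ of $\mu$-measurable functions built from an \emph{open-ball} analogue of $L_f$, namely
\[
\widetilde{L}_f(x,r):=\sup\{d_Y(f(y),f(x))\colon y\in\Om,\ d(y,x)<r\},\qquad r>0.
\]
Unlike $L_f(\cdot,r)$, the function $\widetilde{L}_f(\cdot,r)$ is easily seen to be lower semicontinuous at continuity points of $f$, and monotonicity in $r$ will ensure that replacing $L_f$ by $\widetilde{L}_f$ (and real radii by rational radii) produces the same $\liminf$.

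First I would enlarge $H$, if necessary, to the full set $H_{0}$ of all continuity points of $f$; this is the standard $G_{\delta}$ set $H_{0}=\bigcap_{n\in\N}\{x\in\Om\colon\omega_{f}(x)<1/n\}$, where $\omega_{f}(x):=\inf_{r>0}\diam f(\Om\cap B(x,r))$ is upper semicontinuous. Since $H\subset H_{0}$ we still have $\mu(\Om\setminus H_{0})=0$, and $H_{0}$ is Borel. Next, I would show that for every fixed $r>0$ the function $\widetilde{L}_{f}(\cdot,r)$ is lower semicontinuous at every $x_{0}\in H_{0}$: given any $y\in B(x_{0},r)\cap\Om$, the strict inequality $d(y,x_{0})<r$ ensures $d(y,x)<r$ for all $x$ sufficiently close to $x_{0}$, and continuity of $f$ at $x_{0}$ yields $d_Y(f(y),f(x))\to d_Y(f(y),f(x_0))$; taking the supremum over such $y$ gives $\liminf_{x\to x_{0}}\widetilde{L}_{f}(x,r)\ge\widetilde{L}_{f}(x_{0},r)$. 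Consequently, for every $\alpha\in\R$ the set $\{\widetilde{L}_{f}(\cdot,r)>\alpha\}\cap H_{0}$ is open in $H_{0}$, hence Borel, while the remainder is contained in the null set $\Om\setminus H_{0}$. Therefore $\widetilde{L}_{f}(\cdot,r)$ is $\mu$-measurable on $\Om$.

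Third, I would show the identity
\[
\lip_{f}(x)=\liminf_{\substack{r\to 0\\ r\in\Q_{+}}}\frac{\widetilde{L}_{f}(x,r)}{r}.
\]
This rests on two monotonicity observations. First, the inclusions $B(x,r)\subset\overline{B}(x,r)\subset B(x,r')$ for $r<r'$ give
\[
\widetilde{L}_{f}(x,r)\le L_{f}(x,r)\le\widetilde{L}_{f}(x,r'),
\]
so choosing $r'=r(1+\eps)$ and letting $\eps\to 0$ yields $\liminf_{r\to 0}L_{f}(x,r)/r=\liminf_{r\to 0}\widetilde{L}_{f}(x,r)/r$ by a standard sandwich argument. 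Second, $r\mapsto\widetilde{L}_{f}(x,r)$ is nondecreasing, so along any sequence $r_{n}\to 0$ realizing the real $\liminf$, one can pick rationals $s_{n}\le r_{n}$ with $s_{n}/r_{n}\to 1$ and obtain $\widetilde{L}_{f}(x,s_{n})/s_{n}\le(r_{n}/s_{n})\widetilde{L}_{f}(x,r_{n})/r_{n}$, showing that the rational $\liminf$ is no larger than the real one (the reverse inequality is automatic). Combining these, $\lip_{f}=\sup_{k\in\N}\inf_{r\in\Q_{+}\cap(0,1/k)}\widetilde{L}_{f}(\cdot,r)/r$, a countable sup–inf of $\mu$-measurable functions, hence $\mu$-measurable.

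The main obstacle is that $L_{f}(\cdot,r)$ with its closed ball is not directly seen to be either upper or lower semicontinuous: upper semicontinuity would require extracting a convergent subsequence from near-maximizers $y_{n}\in\overline{B}(x_{n},r)$, which is not available without local compactness; and the boundary points of $\overline{B}(x,r)$ can jump in or out under perturbation, spoiling lower semicontinuity. Passing to the open-ball version $\widetilde{L}_{f}$ restores lower semicontinuity at continuity points essentially for free, while the sandwich $\widetilde{L}_{f}(\cdot,r)\le L_{f}(\cdot,r)\le\widetilde{L}_{f}(\cdot,r')$ guarantees that this replacement is harmless at the level of the pointwise $\liminf$ defining $\lip_{f}$.
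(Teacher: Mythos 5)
Your proposal is correct and takes essentially the same route as the paper: pass to the open-ball version of $L_f$, observe it is lower semicontinuous at continuity points of $f$, sandwich the closed-ball and open-ball quantities to show they yield the same $\liminf$, and then write $\lip_f$ as a countable sup--inf over rational radii to conclude Borel measurability on the continuity set (hence $\mu$-measurability on $\Om$). The only cosmetic difference is that you first enlarge $H$ to the full $G_\delta$ set $H_0$ of continuity points, whereas the paper works with $H$ directly; both suffice.
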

\begin{proof}
For a fixed $r>0$, the mapping
\[
x \mapsto \frac{\sup\{d_Y(f(y),f(x))\colon d(y,x)< r,\,y\in\Om\}}{r}
\]
is lower semicontinuous at every point $x\in H$.
On the other hand, for every $x\in \Om$,
\begin{align*}
\lip_f(x)
=\liminf_{r\to 0}\frac{L_f(x,r)}{r}
&=\liminf_{r\to 0}\frac{\sup\{d_Y(f(y),f(x))\colon d(y,x)\le  r,\,y\in\Om\}}{r}\\
&=\liminf_{r\to 0}\frac{\sup\{d_Y(f(y),f(x))\colon d(y,x)< r,\,y\in\Om\}}{r}\\
&=\lim_{j\to\infty}\inf_{r\in (0,1/j)\cap \mathcal Q}\frac{\sup\{d_Y(f(y),f(x))\colon d(y,x)< r,\,y\in\Om\}}{r}.
\end{align*}
Together these establish that $\lip_f$ is a Borel function in $H$.
Thus it is $\mu$-measurable in $\Om$.
\end{proof}

The next lemma is standard; see e.g. \cite[Lemma 4.8]{LaZh2}.

\begin{lemma}\label{lem:abs cont initial}
	Let $-\infty<a<b<\infty$.
	Let $h\colon [a,b]\to Y$ be a continuous mapping such that $\lip_h(t)<\infty$
	for all $t\in T$ for some $\mathcal L^1$-measurable set $T\subset [a,b]$.
	Then
	\[
	\mathcal H^1(h(T))\le \int_T\lip_h(t)\,dt.
	\]
\end{lemma}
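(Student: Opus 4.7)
The proof is a Vitali-covering argument on level sets of $\lip_h$. Fix $\varepsilon>0$ and $\delta>0$; I will bound the Hausdorff pre-measure $\mathcal H^1_{\delta}(h(T))$ and then let $\delta\to 0$ and $\varepsilon\to 0$. Decompose
\[
T=\bigsqcup_{k=0}^{\infty} T_k,\qquad T_k:=T\cap\{k\varepsilon\le \lip_h<(k+1)\varepsilon\}.
\]
Each $T_k$ is $\mathcal L^1$-measurable, since $h$ is continuous on $[a,b]$ and so $\lip_h$ is Borel measurable by Lemma \ref{lem:Borel for lip f}. By outer regularity of $\mathcal L^1$, choose open $U_k\supset T_k$ with $(k+2)\varepsilon\cdot\mathcal L^1(U_k\setminus T_k)\le\varepsilon 2^{-k}$, so the error terms are summable.

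For each $t\in T_k$, the bound $\lip_h(t)<(k+1)\varepsilon$ yields arbitrarily small $r>0$ with $L_h(t,r)\le(k+2)\varepsilon\cdot r$. Shrinking further, I require $r\le\delta/(2(k+2)\varepsilon)$ (so that $\diam h([t-r,t+r]\cap[a,b])\le\delta$) and $[t-r,t+r]\subset U_k$. These intervals form a fine cover of $T_k$, so Vitali's covering theorem for $\mathcal L^1$ yields a countable pairwise disjoint subfamily $\{I_{k,j}=[t_{k,j}-r_{k,j},t_{k,j}+r_{k,j}]\}_j\subset U_k$ covering $T_k$ up to a Lebesgue null set $N_k$. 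Since
\[
\diam h(I_{k,j}\cap[a,b])\le 2L_h(t_{k,j},r_{k,j})\le(k+2)\varepsilon\cdot\mathcal L^1(I_{k,j}),
\]
and $\lip_h\ge k\varepsilon$ on $T_k$, summing over $j$ and $k$ produces
\[
\sum_{k,j}\diam h(I_{k,j}\cap[a,b])\le \int_T\lip_h\,dt+C\varepsilon,
\]
with a constant $C$ depending only on $\mathcal L^1(T)$.

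The main obstacle is to control $h$ on the leftover $N:=\bigcup_k N_k$, which is Lebesgue null but whose image under a merely continuous $h$ could a priori carry positive $\mathcal H^1$. Since $\lip_h\le(k+1)\varepsilon<\infty$ on each $N_k$, I claim $\mathcal H^1_{\delta}(h(N_k))=0$. For any $\eta>0$, enclose $N_k$ in an open $V\subset[a,b]$ with $\mathcal L^1(V)<\eta$ and cover $N_k$ by intervals $[t-r_t,t+r_t]\subset V$ with $L_h(t,r_t)\le(k+2)\varepsilon\cdot r_t$ and $r_t\le\delta/(2(k+2)\varepsilon)$; the one-dimensional Besicovitch covering theorem extracts a countable subfamily of multiplicity at most $2$, giving
\[
\mathcal H^1_{\delta}(h(N_k))\le 2(k+2)\varepsilon\cdot \mathcal L^1(V)<2(k+2)\varepsilon\,\eta,
\]
which tends to $0$ as $\eta\to 0$. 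Combining with the main Vitali estimate and then letting $\delta\to 0$ and $\varepsilon\to 0$ yields $\mathcal H^1(h(T))\le\int_T\lip_h\,dt$.
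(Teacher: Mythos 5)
The paper does not prove this lemma; it simply cites \cite[Lemma 4.8]{LaZh2} as a standard fact, so there is no internal proof to compare against. Your argument is a correct, self-contained proof via the standard covering-on-level-sets route. The decomposition into $T_k=\{k\varepsilon\le\lip_h<(k+1)\varepsilon\}$, the Vitali extraction of disjoint intervals $I_{k,j}\subset U_k$ centered at points of $T_k$ with $\diam h(I_{k,j}\cap[a,b])\le(k+2)\varepsilon\,\mathcal L^1(I_{k,j})$ and $\le\delta$, and the telescoping $(k+2)\varepsilon\,\mathcal L^1(T_k)\le\int_{T_k}\lip_h+2\varepsilon\,\mathcal L^1(T_k)$ are all sound, and the $U_k$-slack is chosen so that the error sums to $O(\varepsilon)$. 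The one nontrivial point you correctly flag and handle is the Vitali leftover $N_k$: since $h$ is merely continuous one must show $\mathcal H^1(h(N_k))=0$, and your Besicovitch argument (finite overlap constant for intervals in $\R$, total length controlled by an arbitrarily small open neighborhood of $N_k$, local Lipschitz rate bounded by $(k+2)\varepsilon$ on $T_k\supset N_k$) does exactly that. Two cosmetic remarks: the constant in "$\le\int_T\lip_h\,dt+C\varepsilon$" is actually $C=2\mathcal L^1(T)+2$, so it is a fixed constant rather than one "depending only on $\mathcal L^1(T)$", but this is irrelevant as $\varepsilon\to0$; and strictly speaking Lemma \ref{lem:Borel for lip f} is stated for an open $\Om\subset X$, but its proof applies verbatim to the compact interval $[a,b]$ (or one can extend $h$ continuously to an open interval), so the Borel measurability of $\lip_h$, and hence of the $T_k$, is unproblematic.
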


The following lemma says essentially that $\lip_f$ is a weak upper gradient of $f$.
Recall that $1\le p<\infty$.

\begin{lemma}\label{lem:abs cont}
	Suppose $f\colon \Om\to Y$ is continuous at $\mu$-a.e. point in $\Om$.
	Then for $p$-a.e. curve $\gamma\colon [0,\ell_{\gamma}]\to \Om$ we have that
	if $f\circ\gamma$ is continuous and
	$\lip_f(\gamma(t))<\infty$ for all $t\in T\subset [0,\ell_{\gamma}]$ where $T$ is $\mathcal L^1$-measurable,
	then we get
	\[
	\mathcal H^1(f(\gamma(T)))\le \int_{T}\lip_f(\gamma(t))\,dt.
	\]
\end{lemma}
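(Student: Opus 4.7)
The plan is to reduce to Lemma~\ref{lem:abs cont initial} applied to $h:=f\circ\gamma$, which is continuous by hypothesis. Two things need attention before invoking that lemma: $\mathcal{L}^1$-measurability of the integrand $t\mapsto \lip_f(\gamma(t))$, and the pointwise comparison $\lip_h(t)\le \lip_f(\gamma(t))$ so that the resulting bound is expressed in terms of $\lip_f$ rather than the a priori smaller $\lip_h$.

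The comparison is immediate from arc-length parametrization: since $d(\gamma(s),\gamma(t))\le |s-t|$, we have
\[
\sup\{d_Y(f(\gamma(s)),f(\gamma(t)))\colon |s-t|\le r\}\le L_f(\gamma(t),r),
\]
so dividing by $r$ and taking the liminf yields $\lip_h(t)\le \lip_f(\gamma(t))$ for every $t\in[0,\ell_\gamma]$. In particular $\lip_h(t)<\infty$ on $T$.

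For the measurability step, let $H\subset\Om$ be the set of continuity points of $f$; this is a $G_\delta$ set with $\mu(\Om\setminus H)=0$, and by Lemma~\ref{lem:Borel for lip f}, $\lip_f$ is Borel on $H$ and hence $\mu$-measurable on $\Om$. To transfer this measurability to curves, I would apply Fuglede's lemma (Lemma~\ref{lem:Fuglede lemma}) to the sequence $g_i:=i\,\ch_{\Om\setminus H}$, which lies in $L^p(\Om)$ with zero norm (since $\mu(\Om\setminus H)=0$) and thus converges to $0$ in $L^p(\Om)$. For $p$-a.e. curve $\gamma$ in $\Om$ we obtain
\[
i\,\mathcal{L}^1(\gamma^{-1}(\Om\setminus H))=\int_\gamma g_i\,ds\longrightarrow 0,
\]
which forces $\mathcal{L}^1(\gamma^{-1}(\Om\setminus H))=0$. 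Consequently $\lip_f\circ\gamma$ agrees $\mathcal{L}^1$-a.e. with the Borel function $(\ch_H\lip_f)\circ\gamma$ on $[0,\ell_\gamma]$, and so is $\mathcal{L}^1$-measurable.

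With these ingredients in hand, apply Lemma~\ref{lem:abs cont initial} to $h=f\circ\gamma$ on $[0,\ell_\gamma]$ to conclude
\[
\mathcal{H}^1(f(\gamma(T)))=\mathcal{H}^1(h(T))\le \int_T\lip_h(t)\,dt\le \int_T \lip_f(\gamma(t))\,dt.
\]
The main technical point is the measurability step: the $\mu$-measurability of $\lip_f$ does not a priori transfer to $\mathcal{L}^1$-measurability along arbitrary curves, and the modulus-exceptional-set argument via Fuglede is precisely what justifies the ``$p$-a.e. curve'' qualifier in the statement.
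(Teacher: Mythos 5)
Your proposal is correct and takes essentially the same approach as the paper: compare $\lip_{f\circ\gamma}(t)\le\lip_f(\gamma(t))$ via the $1$-Lipschitz arc-length parametrization, then invoke Lemma~\ref{lem:abs cont initial}. The only difference is in handling measurability of $t\mapsto\lip_f(\gamma(t))$: the paper cites a general result (\cite[Lemma~1.43]{BB}) that curve integrals of $\mu$-measurable functions are well-defined on $p$-a.e.\ curve, whereas you spell this out with a self-contained Fuglede argument showing $\mathcal L^1(\gamma^{-1}(\Om\setminus H))=0$ for $p$-a.e.\ curve, where $H$ is the (Borel, $G_\delta$) set of continuity points on which $\lip_f$ is Borel by Lemma~\ref{lem:Borel for lip f}; this is the same underlying mechanism as the cited lemma, just made explicit.
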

\begin{proof}
	First note that $\lip_f$ is $\mu$-measurable in $\Om$ by Lemma \ref{lem:Borel for lip f}.
	Then curve integrals of $\lip_f$ are well-defined on $p$-a.e. curve in $\Om$,
	see e.g. Lemma 1.43 and its proof in \cite{BB}.
	Consider such a curve $\gamma$.
	For the mapping $h:=f\circ \gamma\colon [0,\ell_{\gamma}]\to Y$, by the fact that $\gamma$
	is a $1$-Lipschitz mapping, for all $t\in T$ we have
	\begin{align*}
		\lip_h(t)
		&=\liminf_{r\to 0}\sup_{s\in B(t,r)}\frac{d_Y(f\circ \gamma(s),f\circ \gamma(t))}{r}\\
		&\le \liminf_{r\to 0}\sup_{y\in B(\gamma(t),r)}\frac{d_Y\big(f(y),f (\gamma(t))\big)}{r}\\
		&= \lip_f(\gamma(t)).
	\end{align*}
	Now the result follows from Lemma \ref{lem:abs cont initial}.
\end{proof}

Since $f$ is generally not assumed to be continuous in this paper, it is of interest to
note that $f$ nonetheless always has the following weak type of continuity.

\begin{lemma}\label{lem:liminf continuity}
	Let $f\colon \Om\to Y$. Then for all except at most countably many $x\in \Om$, there exists
	a sequence $y_l\to x$, $y_l\neq x$,
	with $f(y_l)\to f(x)$.
\end{lemma}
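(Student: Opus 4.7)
\medskip

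\textbf{Plan.}
My first step would be to recharacterize the conclusion pointwise. A point $x\in\Om$ \emph{fails} the conclusion precisely when there are $\eps,\delta>0$ such that
\[
d_Y(f(y),f(x))\ge\eps \quad\text{for every } y\in B(x,\delta)\cap\Om\setminus\{x\}.
\]
Quantizing $\eps=1/m$ and $\delta=1/n$ over $m,n\in\N$, the exceptional set decomposes as a countable union $B=\bigcup_{m,n}B_{m,n}$, where $B_{m,n}$ is the collection of $x\in\Om$ satisfying the displayed condition with these parameters. Hence it suffices to show that each $B_{m,n}$ is at most countable. Note that isolated points of $\Om$ automatically lie in some $B_{m,n}$ (vacuously), so they are handled simultaneously.

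Since $\mu$ is doubling, $X$ (and hence $\Om$) is separable, so I would cover $\Om$ by a countable family of balls $B_i:=B(z_i,1/(2n))$. For any two distinct $x_1,x_2\in B_{m,n}\cap B_i$, the triangle inequality gives $d(x_1,x_2)<1/n$; the definition of $B_{m,n}$ then forces $d_Y(f(x_1),f(x_2))\ge 1/m$. Consequently $f$ is injective on $B_{m,n}\cap B_i$ and its image is $(1/m)$-separated in $Y$.

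To exploit this separation, I would fix a basepoint $q\in Y$ and decompose $\Om=\bigcup_{R\in\N}f^{-1}(B_Y(q,R))$. On $A_{i,R}:=B_{m,n}\cap B_i\cap f^{-1}(B_Y(q,R))$, the balls $\{B_Y(f(x),1/(2m))\}_{x\in A_{i,R}}$ are pairwise disjoint (by the $1/m$-separation) and are all contained in $B_Y(q,R+1)$, which has \emph{finite} $\nu$-measure by the standing assumption. Since each such ball has \emph{strictly positive} $\nu$-measure, only countably many of them can be disjoint inside a finite-measure set, lest the total measure blow up. Injectivity of $f$ on $B_{m,n}\cap B_i$ then transfers this countability back to $A_{i,R}$. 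Taking countable unions over $i$ and $R$ shows $B_{m,n}$ is at most countable, and a final union over $m,n$ concludes the proof.

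The main technical point is the conversion of the $1/m$-separation of the image set into countability: without the assumption that balls in $Y$ have \emph{both} finite and strictly positive measure, an uncountable discrete family of image points could survive, and the argument would break down. Everything else (the quantization, the separability of $\Om$ from the doubling property, the reduction to a bounded target ball) is routine once this observation is in place.
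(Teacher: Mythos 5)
Your proof is correct and follows essentially the same structure as the paper's: quantize the failure condition to write the exceptional set as a countable union, use separability of $X$ (from doubling) to cover $\Om$ by countably many small balls, observe that the images of points in each piece are uniformly separated, and conclude countability. The only substantive difference is the final step: the paper invokes separability of $Y$ directly (citing a proposition from Björn--Björn) and derives a contradiction, whereas you prove the needed countability from scratch by restricting to finitely-measured target balls $B_Y(q,R+1)$ and packing disjoint positive-measure balls into them — which is effectively an inline proof that the standing assumption ``every ball in $Y$ has nonzero and finite measure'' forces $Y$ to be separable. Your version is slightly longer but more self-contained, and makes explicit exactly which hypothesis on $\nu$ is doing the work.
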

\begin{proof}
	Consider the set of points in $\Om$ for which no such sequence exists; call this set $A$.
	Let $x\in A$. Then there exists $\delta>0$ such that
	$d_Y(f(y),f(x))\ge \delta$ for all $y\in \Om\cap B(x,\delta)\setminus \{x\}$.
	Thus $A=\bigcup_{j=1}^{\infty}A_j$ with
	\[
	A_j:=\{x\in \Om\colon d_Y(f(y),f(x))\ge 1/j\ \textrm{ for all }\ y\in \Om\cap B(x,1/j)\setminus \{x\}\}.
	\]
	It is enough to show that every $A_j$ is countable; suppose instead that $A_j$ is
	uncountable for a fixed $j$.
	Note that both $X$ and $Y$ are separable;
	see \cite[Proposition 1.6]{BB}.
	Since $X$ is separable, there exists a collection of points $\{z_k\}_{k=1}^{\infty}\subset \Om$
	that is dense in $\Om$. Thus $\Om$ is covered by the countable collection of balls
	$\{B(z_k,(2j)^{-1})\}_{k=1}^{\infty}$. Then for some $k\in\N$, also the set
	$D:=B(z_k,(2j)^{-1})\cap A_j$
	is uncountable. If $x_1,x_2\in D$ are two distinct points, then $d(x_1,x_2)<1/j$,
	and so $d_Y(f(x_1),f(x_2))\ge 1/j$.
	Then the balls $B(f(x),(2j)^{-1})$, $x\in D$, are disjoint.
	But this contradicts the separability of $Y$.
\end{proof}

Denote by $D_f$ the set of points in $\Om$ where $f$ is not continuous.

\begin{lemma}\label{lem:discontinuity set}
	Let $f\colon \Om\to Y$. Then $D_f\cap\{H_f<\infty\}$ is an at most countable set.
\end{lemma}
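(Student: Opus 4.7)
The strategy is to reduce the claim to Lemma \ref{lem:liminf continuity}, which already supplies a countable exceptional set. Concretely, I will show that if $x\in D_f$ lies outside the countable set produced by Lemma \ref{lem:liminf continuity}, then necessarily $H_f(x)=\infty$; equivalently, $D_f\cap\{H_f<\infty\}$ is contained in that countable set.

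First I fix $x\in D_f\cap\{H_f<\infty\}$ and assume, toward a contradiction, that Lemma \ref{lem:liminf continuity} applies at $x$, giving a sequence $y_l\to x$ with $y_l\ne x$ and $f(y_l)\to f(x)$. Because $x\in D_f$, there is also a sequence $z_k\to x$ (necessarily with $z_k\ne x$) along which $f$ fails to converge to $f(x)$, so after passing to a subsequence I may assume $d_Y(f(z_k),f(x))\ge \delta$ for some $\delta>0$ and all $k$.

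The core computation is to evaluate $H_f(x,r)$ along the radii $r_l:=d(y_l,x)$, which are positive and tend to $0$. Since $d(y_l,x)=r_l\ge r_l$, $y_l$ is admissible in the definition of $l_f(x,r_l)$, giving
\[
l_f(x,r_l)\le d_Y(f(y_l),f(x))\xrightarrow[l\to\infty]{} 0.
\]
On the other hand, for each fixed $l$ and every sufficiently large $k$ we have $d(z_k,x)\le r_l$, so $z_k$ is admissible in the definition of $L_f(x,r_l)$, yielding
\[
L_f(x,r_l)\ge d_Y(f(z_k),f(x))\ge \delta.
\]
Dividing (and interpreting a zero denominator as $+\infty$) gives $H_f(x,r_l)\ge \delta/d_Y(f(y_l),f(x))\to\infty$, hence $H_f(x)=\limsup_{r\to 0}H_f(x,r)=\infty$. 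This contradicts $H_f(x)<\infty$.

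The argument is essentially a direct combination of the definitions with Lemma \ref{lem:liminf continuity}; the only mild subtlety is the bookkeeping with the strict/non-strict inequalities in the definitions of $L_f$ and $l_f$, which is handled by the choice $r_l=d(y_l,x)$ so that $y_l$ sits exactly on the boundary sphere and is therefore admissible for $l_f(x,r_l)$, while the $z_k$'s comfortably fall inside the closed ball of radius $r_l$ for $k$ large.
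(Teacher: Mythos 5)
Your proof is correct and follows essentially the same route as the paper: both reduce to Lemma~\ref{lem:liminf continuity} and then exploit the defining sequences $\{y_l\}$ (giving small $l_f$) and $\{z_k\}$ (giving $L_f\ge\delta$) to conclude $H_f(x)=\infty$. The only cosmetic difference is that you evaluate $H_f(x,r)$ along the specific radii $r_l=d(y_l,x)$ and let $l\to\infty$, whereas the paper fixes an arbitrary threshold $M$ and shows $H_f(x,r)\ge M$ for \emph{all} sufficiently small $r$ (so the paper's argument actually yields $h_f(x)=\liminf_{r\to 0}H_f(x,r)=\infty$, a slightly stronger conclusion than the $\limsup$ statement needed for the lemma); both arguments suffice for the stated claim.
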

\begin{proof}
	Let $x\in \Om$ be a point that is not in the exceptional set of Lemma \ref{lem:liminf continuity}.
	Then we find a sequence $y_k\to x$, $y_k\neq x$, with
	$d_Y(f(y_k),f(x))\to 0$.
	Suppose $f$ is discontinuous at $x$; it is enough to show that $H_f(x)=\infty$.
	Now we also find a sequence 
	$z_k\to x$ with $d_Y(f(z_k),f(x))\ge \delta>0$.
	Fix an arbitrary $M>0$.
	For all sufficiently small $r>0$, there exists $y_k\in \Om$ with $d(y_k,x)>r$ and $d_Y(f(y_k),f(x))< \delta/M$,
	and $z_k\in \Om$ with $d(z_k,x)<r$ and $d_Y(f(z_k),f(x))\ge  \delta$.
	Thus
	\[
	\frac{L_f(x,r)}{l_f(x,r)}\ge \frac{ \delta}{ \delta/M}=M.
	\]
	It follows that
	\[
	\limsup_{r\to 0}H_f(x,r)\ge M\quad\textrm{and thus}\quad \limsup_{r\to 0}H_f(x,r)=\infty,
	\]
	since $M>0$ was arbitrary. Thus $H_f(x)=\infty$.
\end{proof}

Finally, we give the following lemma.
Recall that we denote by $\Gamma_A$ the
family of all nonconstant curves in $X$ that intersect $A$, and that $1\le p<\infty$.

\begin{lemma}\label{lem:curve intersects set}
Let $A\subset X$ with $\widetilde{\mathcal H}^p(A)=0$.
Then $\Mod_p(\Gamma_A)=0$.
\end{lemma}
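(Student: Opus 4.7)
The plan is to exploit the zero codimension-$p$ measure of $A$ by choosing efficient ball coverings that produce admissible densities of arbitrarily small $L^p$-norm. Since every $\gamma\in\Gamma_A$ is nonconstant, $\ell_\gamma>0$, so $\Gamma_A=\bigcup_{n\in\N}\Gamma_A^n$ with $\Gamma_A^n:=\{\gamma\in\Gamma_A:\ell_\gamma\ge 1/n\}$, and countable subadditivity of $\Mod_p$ reduces the claim to showing $\Mod_p(\Gamma_A^n)=0$ for each fixed $n$.

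Fix $n\in\N$ and $\delta>0$. Since $\widetilde{\mathcal H}^p(A)=0$, I select a countable cover $\{B_i:=B(x_i,r_i)\}$ of $A$ with $r_i\le 1/(2n)$ and $\sum_i\mu(B_i)/r_i^p<\delta$, and set
\[
\rho(y):=\sup_i \frac{1}{r_i}\,\ch_{B(x_i,2r_i)}(y),
\]
a Borel function as a countable supremum of Borel functions. To verify admissibility, take $\gamma\in\Gamma_A^n$ and fix $x=\gamma(t_0)\in A\cap\gamma$ and an index $i$ with $x\in B_i$; then $B(x,r_i)\subset B(x_i,2r_i)$ by the triangle inequality. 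Since arc-length parametrization makes $\gamma$ a $1$-Lipschitz map, $\gamma(s)\in B(x,r_i)$ whenever $|s-t_0|<r_i$, and since $r_i\le 1/(2n)\le \ell_\gamma/2$ the Lebesgue measure of $\{s\in[0,\ell_\gamma]:|s-t_0|<r_i\}$ is at least $r_i$. Therefore
\[
\int_\gamma \rho\,ds\ge \frac{1}{r_i}\int_\gamma\ch_{B(x_i,2r_i)}\,ds\ge \frac{1}{r_i}\cdot r_i=1.
\]

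For the $L^p$-estimate, the elementary pointwise inequality $(\sup_i a_i)^p\le \sum_i a_i^p$ (valid for $a_i\ge 0$ and $p\ge 1$) combined with the doubling property gives
\[
\int_X\rho^p\,d\mu\le \sum_i\frac{\mu(B(x_i,2r_i))}{r_i^p}\le C_d\sum_i\frac{\mu(B_i)}{r_i^p}<C_d\delta.
\]
Hence $\Mod_p(\Gamma_A^n)<C_d\delta$, and letting $\delta\to 0$ yields $\Mod_p(\Gamma_A^n)=0$ and, by countable subadditivity, $\Mod_p(\Gamma_A)=0$.

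The delicate point is the $L^p$-bound: the overlap of $\{B(x_i,2r_i)\}$ can be arbitrary, and the more obvious choice $\rho:=\sum_i r_i^{-1}\ch_{B(x_i,2r_i)}$ would only give $\|\rho\|_p\le\sum_i(\mu(B(x_i,2r_i))/r_i^p)^{1/p}$ via Minkowski, which need not be small even when $\sum_i \mu(B_i)/r_i^p$ is. Replacing the sum by a pointwise supremum lets the $p$-th power pass through and reduces the estimate directly to the Hausdorff-content sum, bypassing any bounded-overlap hypothesis on the covering.
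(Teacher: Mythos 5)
Your proof is correct and rests on the same key device as the paper's: taking $\rho$ to be a \emph{supremum} (not a sum) of rescaled indicator functions of doubled balls, so that the elementary inequality $(\sup_i a_i)^p\le\sum_i a_i^p$ together with the doubling property reduces the $L^p$-norm directly to the codimension-$p$ content sum. The only organizational difference is that you split $\Gamma_A$ by minimum curve length and appeal to countable subadditivity of $\Mod_p$, whereas the paper avoids this by building one $\rho$ from covers at all scales $1/j$ simultaneously (with cost $2^{-j}\eps$ at scale $j$), but this is a bookkeeping variation rather than a different route.
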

\begin{proof}
	Fix $\eps>0$.
For every $j\in\N$, we find an
(at most countable) collection of balls $B_{j,k}=B(x_{j,k},r_{j,k})$ covering $A$,
such that $r_{j,k}<1/j$ and
\[
\sum_{k}\frac{\mu(B_{j,k})}{r_{j,k}^p}<2^{-j}\eps.
\]
Define
\[
\rho:=\sup_{j,k}\frac{\ch_{2B_{j,k}}}{r_{j,k}}.
\]
For every $\gamma \in \Gamma_A$, we can choose $j>1/\ell_{\gamma}$ and
$k$ such that $\gamma\cap B_{j,k}\neq \emptyset$, and then
\[
\int_{\gamma}\rho\,ds\ge \int_{\gamma}\frac{\ch_{2B_{j,k}}}{r_{j,k}}\,ds\ge 1.
\]
Now
\begin{align*}
\Mod_p(\Gamma_A)\le\Vert \rho\Vert_{L^p(X)}^p
= \int_X \left(\sup_{j,k}\frac{\ch_{2B_{j,k}}}{r_{j,k}}\right)^p\,d\mu
&\le \sum_{j,k}\frac{\mu(2B_{j,k})}{r_{j,k}^p}  \le C_d \eps.
\end{align*}
Letting $\eps\to 0$, we get the result.
\end{proof}

\section{Main theorems}

In this section we state and prove our main theorems.
The theorems all take a rather long and complicated form, but in Section \ref{sec:cor} we will consider more
streamlined corollaries.
In \cite[Theorem 5.10]{LahGFQ}, a somewhat similar theorem was proved in the Euclidean setting, but there
(as in all of the literature, as far as we know)
the mapping $f$ was assumed to be continuous, which is of course not a natural
assumption in the BV setting.

Recall that $1<Q<\infty$ is a fixed parameter,
and that we denote by $\Gamma_N$ the
family of all nonconstant curves in $X$ that intersect $N$.

\begin{theorem}\label{thm:main theorem}
	Suppose $\Om$ is nonempty, open, and bounded,
	$f\colon\Om\to Y$,
	$M\ge 1$, $\kappa$ is a finite Radon measure on $\Om$ whose support contains $\Om$,
	and suppose $\Om$ admits a partition into disjoint sets $A$, $D$, and $N$ such that
	$\Lip_f^{\kappa,M}<\infty$ in $A$,
	$H_f^{\kappa,M}<\infty$ in $D$,
	$\Mod_1(\Gamma_N)=0$, and
	\begin{equation}\label{eq:A1A2 assumption}
		\int_{\Om}h\,d\kappa<\infty
		\quad\textrm{for a function }h\ge \Lip_f^{\kappa,M}\ch_{A}+(H_f^{\kappa,M})^{Q/(Q-1)}\ch_D.
	\end{equation}
	Furthermore, suppose that for some $0<C_{\Om}<\infty$ and for all $r>0$,
	\begin{equation}\label{eq:measure lower and upper bound}
	\frac{\mu(B(x,r))}{r^Q}\le C_{\Om}
	\ \textrm{ for all }x\in \Om
	\quad\textrm{and}\quad
	\frac{\nu(B(f(x),r))}{r^Q}\ge C_{\Om}^{-1}
	\ \textrm{ for all }x\in D,
	\end{equation}
	that there is an open set $W$ such that $D\subset W\subset \Om$
	and $f$ is injective in $W$, and that there is $\beta>0$ such that
	\begin{equation}\label{eq:finite image assumption}
		\nu(V)<\infty\quad\textrm{for }V:=\bigcup_{y\in D}B(f(y),l_f(y,\beta)).
	\end{equation}
	Also let $0<\eps\le 1$. Then $f\in D^{\BV}(\Om;Y)$ with
	\begin{equation}\label{eq:BV regularity claim}
	\Vert Df\Vert(\Om) \le
	C_1\eps^{-1/(Q-1)}\int_{\Om}h\,d\kappa
	+C_1\eps(\kappa(\Om)+\nu(V)),
	\end{equation}
	where $C_1$ only depends on $Q$, $M$, $C_d$, and $C_{\Om}$.
\end{theorem}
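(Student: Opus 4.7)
The plan is to construct, for each $i \in \mathbb{N}$, a nonnegative $g_i \in L^1(\Om)$ verifying Definition \ref{def:BV def} with $L^1$-norm bounded by the right-hand side of \eqref{eq:BV regularity claim}. Since $\AM \le \Mod_1$ and $\Mod_1(\Gamma_N)=0$ by Lemma \ref{lem:curve intersects set}, it suffices to verify \eqref{eq:AM ae curve} along curves $\gamma$ whose image lies in $A \cup D$.

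Fix $i$. At every $x \in A \cup D$, I would choose a radius $r_x \in (0, 1/i)$ so that (i) the limsup defining $\Lip_f^{\kappa,M}(x)$ (for $x \in A$) or $H_f^{\kappa,M}(x)$ (for $x \in D$) is realized up to a factor $1 + 1/i$; (ii) $r_x \le \beta$ and $B_x := B(x, r_x) \subset W$ on $D$; and (iii) after replacing $h$ by a lower-semicontinuous upper approximation via Theorem \ref{thm:VitaliCar} and restricting to $\kappa$-differentiation points of $h \in L^1(\kappa)$, one has $h(x)\kappa(MB_x) \lesssim \int_{MB_x} h\, d\kappa$. The $5B$-Vitali covering theorem then extracts a disjoint subfamily $\mathcal B_i = \mathcal B_i^A \sqcup \mathcal B_i^D$ whose $5$-enlargements cover $A$ and $D$, and I set
\[
g_i := C \sum_{B \in \mathcal B_i} \frac{L_f(x_B, 5 r_B)}{r_B}\, \ch_{10 B}.
\]
On $A$, the chosen estimate at $x_B$ gives $L_f(x_B, 5 r_B)\mu(B)/r_B \lesssim \Lip_f^{\kappa,M}(x_B)\kappa(MB)$. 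On $D$, combining the distortion estimate with the Ahlfors lower bound in \eqref{eq:measure lower and upper bound} and the inclusion $B(f(x_B), l_f(x_B, r_B)) \cap f(\Om) \subset f(B_x)$ yields
\[
L_f(x_B, 5 r_B) \lesssim H_f^{\kappa,M}(x_B)\Big(\tfrac{\kappa(MB)}{\mu(MB)}\Big)^{(Q-1)/Q}\!\!\nu(f(B_x))^{1/Q}.
\]

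For the $L^1$-norm $\int_\Om g_i\, d\mu$, the $A$-contribution is $\lesssim \int_\Om h\, d\kappa$ via (iii) and bounded overlap of the $MB$'s. For the $D$-contribution, Young's inequality with exponents $Q/(Q-1)$ and $Q$ and weight $\eps$, applied to the product $H_f^{\kappa,M}(x_B)(\kappa/\mu)^{(Q-1)/Q}\cdot \nu(f(B_x))^{1/Q}$, gives
\[
\sum_{B\in \mathcal B_i^D} L_f(x_B, 5r_B)\mu(B)/r_B \lesssim \eps^{-1/(Q-1)} \int_\Om h\, d\kappa + \eps \sum_{B \in \mathcal B_i^D} \nu(f(B_x)).
\]
Injectivity of $f$ on $W$ makes $\{f(B_x)\}_{B \in \mathcal B_i^D}$ pairwise disjoint, and $f(B_x) \subset V$ from $r_x \le \beta$ (combined where needed with Lemma \ref{lem:discontinuity set} to kill isolated discontinuities), so the last sum is at most $\nu(V)$. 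The $(1+1/i)$-slack in the pointwise estimates absorbs into an additional $\lesssim \eps\, \kappa(\Om)$ error for small enough $i$, producing the target bound.

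Finally, to verify \eqref{eq:AM ae curve}, I would fix an $\AM$-a.e.\ curve $\gamma \subset A \cup D$ and almost every $t_1 < t_2$, and apply Lemma \ref{lem:line behavior} to $f \circ \gamma$ on $[t_1, t_2]$ with $U_{i,k} := \gamma^{-1}(5 B_{i,k})$; the condition $\ch_{W_i}(t) \to 1$ is immediate since $\gamma(t) \in A \cup D$ lies in some $5 B_{i,k}$ for every $i$. This yields
\[
d_Y(f(\gamma(t_1)), f(\gamma(t_2))) \le 2 \liminf_i \sum_{k:\, 5B_{i,k} \cap \gamma([t_1, t_2]) \neq \emptyset} L_f(x_{i,k}, 5 r_{i,k}).
\]
For $i$ large, an arc-length triangle-inequality argument yields $\int_{\gamma|_{[t_1, t_2]}} \ch_{10 B_{i,k}}\, ds \ge r_{i,k}$ for every summed ball except the at most two within distance $r_{i,k}$ of each endpoint, whose combined contribution vanishes since $r_{i,k} \to 0$ and $f$ is continuous at $\gamma(t_j)$ for almost every $t_j$ (Lemma \ref{lem:discontinuity set}). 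Thus the right-hand side is dominated by $\liminf_i \int_{\gamma|_{[t_1, t_2]}} g_i\, ds$, establishing \eqref{eq:AM ae curve}. The main obstacle is coordinating the four competing constraints on the Vitali covering — limsup realization, $\kappa$-differentiation centers for $h$, disjointness of both $\{B\}$ and $\{f(B_x)\}$, and small enough radii to kill the endpoint error — together with the Young-inequality splitting that produces the $\eps^{-1/(Q-1)}$ vs.\ $\eps$ trade-off in \eqref{eq:BV regularity claim}.
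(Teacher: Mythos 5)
Your proposal follows the same overall strategy as the paper's proof---cover $A$ and $D$ by balls at which the defining $\limsup$'s are realized up to a small error, define $g_i$ as a weighted sum of characteristic functions of dilated balls, estimate $\|g_i\|_{L^1(\Om)}$ using doubling, the Ahlfors-type bounds, and Young's inequality to produce the $\eps^{-1/(Q-1)}$ vs.\ $\eps$ split, and verify \eqref{eq:AM ae curve} via Lemma \ref{lem:line behavior}. But the covering step has a genuine gap that your own list of ``competing constraints'' does not resolve. You pick a radius $r_x<1/i$ at each point and then extract a \emph{disjoint} family $\mathcal B_i$ via the $5B$-Vitali theorem; the $L^1$ estimate then needs $\sum_{B\in\mathcal B_i}\int_{MB}h\,d\kappa\lesssim\int_\Om h\,d\kappa$, i.e.\ \emph{bounded overlap of the dilated balls $MB$}. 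Disjointness of the $B$'s does not yield this when the radii vary across scales---not even in $\R^n$: take one ball of radius $1$ near the origin together with infinitely many tiny pairwise-disjoint balls whose centers all lie within distance $M/2$ of a fixed point; that point lies in every $MB$. The paper avoids this by first writing $A=\bigcup_j A_j$ and $D=\bigcup_j D_j$, where $A_j$ (resp.\ $D_j$) is the set of points at which the relevant supremum is controlled \emph{for all radii $\le 1/j$} (see \eqref{eq:Aj def}--\eqref{eq:Dj def 2}), and then covers $A_j$ and $D_j$ by balls of the \emph{uniform} radius $1/j$; at a single scale, $\mu$-doubling gives the bounded overlap constant $C_M$ for the $(M+1)$-dilates. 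This scale decomposition is not a cosmetic variant of your plan---it is the mechanism that makes the overlap estimate (and hence the whole $L^1$ bound) valid in a general doubling space.

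Two further points would need repair. First, you appeal to ``$\kappa$-differentiation points of $h$'' to get $h(x)\kappa(MB_x)\lesssim\int_{MB_x}h\,d\kappa$, but $\kappa$ is only assumed to be a finite Radon measure, not doubling, so no Lebesgue differentiation theorem for $\kappa$ is available; the paper instead exploits the \emph{lower semicontinuity} of $h$ (from Theorem \ref{thm:VitaliCar}) and the pointwise bound $h\ge\Lip_f^{\kappa,M}$ on $A$ to arrange $\Lip_f^{\kappa,M}(x)\le h(z)+\eps^{Q/(Q-1)}$ for all $z\in B(x,M/j)$ when $x\in A_j$ (condition \eqref{eq:Aj def 2}), which is what actually feeds into the estimate \eqref{eq:A1 estimate}. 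Second, on $D$ you try to bound $L_f(x_B,5r_B)$ in terms of $\nu(f(B_x))^{1/Q}$ using the inclusion $B(f(x_B),l_f(x_B,r_B))\cap f(\Om)\subset f(B_x)$, but the Ahlfors lower bound in \eqref{eq:measure lower and upper bound} controls $l_f(x_B,r_B)^Q$ by $\nu\bigl(B(f(x_B),l_f(x_B,r_B))\bigr)$, and this ball measure is \emph{not} dominated by $\nu(f(B_x))$, since most of the ball may lie outside $f(\Om)$; likewise $f(B_x)\subset V$ does not follow from $r_x\le\beta$. The paper works directly with $\nu\bigl(B(f(y_{j,k}),\widehat l_{j,k})\bigr)$, uses injectivity as in \eqref{eq:injectivity for balls} to get pairwise disjointness of these image balls, and uses monotonicity $l_f(y,1/j)\le l_f(y,\beta)$ to get $B(f(y_{j,k}),\widehat l_{j,k})\subset V$, yielding \eqref{eq:finite nu measure estimate}. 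Incidentally, your worry about ``endpoint'' balls in the curve estimate is unfounded: once $i>1/\ell_\gamma$, every ball of radius $r_{i,k}<1/i<\ell_\gamma$ meeting $\gamma$ contributes length at least $r_{i,k}$ to $\gamma\cap 10B_{i,k}$, with no exceptions.
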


Here $\lceil b\rceil$ denotes the smallest integer at least $b\in\R$.

\begin{proof}
	By the Vitali-Carath\'eodory theorem (Theorem \ref{thm:VitaliCar}), we can assume that the function $h$
	is lower semicontinuous.
	Denote
	\begin{equation}\label{eq:Omega delta def}
	\Om(\delta):=\{x\in \Om\colon \dist(x,X\setminus \Om)>\delta\},\quad\delta>0.
	\end{equation}
	Here $\dist(x,X\setminus \Om):=\inf \{d(x,y)\colon y\in X\setminus \Om\}$.
	We have $A=\bigcup_{j=1}^{\infty}A_j$, where $A_j$ consists of those points
	$x\in A\cap \Om(j^{-1}(M+1))$ for which
	\begin{equation}\label{eq:Aj def}
	\sup_{0<r\le 1/j}\frac{L_f(x,r)}{r}\frac{\mu(B(x,Mr))}{\kappa(B(x,Mr))}
	\le \Lip_f^{\kappa,M}(x)+\eps^{Q/(Q-1)}
	\end{equation}
	and
	\begin{equation}\label{eq:Aj def 2}
	\Lip_f^{\kappa,M}(x)\le h(z)+\eps^{Q/(Q-1)}
	\quad\textrm{for all }z\in B(x,M/j).
	\end{equation}
	Using the fact that $\mu$ is doubling, for any fixed $j\in\N$ we find an (at most countable) collection $\{B_{j,k}=B(x_{j,k},1/j)\}_{k}$ of balls covering $A_j$, with $x_{j,k}\in A_j$,
	and such that the balls $(M+1)B_{j,k}\subset\Om$
	can be divided into at most
	\[
	C_{M}:= C_d^{\lceil \log_2(18M)\rceil }
	\]
	collections of pairwise disjoint balls.
	
	Analogously, we have $D=\bigcup_{j=1}^{\infty}D_j$, where $D_j$ consists of those points
	$y\in D\cap W(j^{-1}(M+1))$ for which
	\begin{equation}\label{eq:Dj def}
	\sup_{0<s\le 1/j}\frac{L_f(y,s)}{l_f(y,s)}\left(\frac{\mu(B(y,Ms))}{\kappa(B(y,Ms))}\right)^{(Q-1)/Q}
	\le H_f^{\kappa,M}(y)+\eps
	\end{equation}
	and
	\begin{equation}\label{eq:Dj def 2}
	(H_{f}^{\kappa,M}(y))^{Q/(Q-1)}\le h(z)+\eps^{Q/(Q-1)}
	\quad\textrm{for all }z\in B(y,M/j).
	\end{equation}
	We find a collection $\{\widehat{B}_{j,k}=B(y_{j,k},1/j)\}_{k}$ of balls
	covering $D_j$, with
	$y_{j,k}\in D_j$,
	and such that the balls $(M+1)\widehat{B}_{j,k}\subset W$ can be divided into at most $C_{M}$
	collections of pairwise disjoint balls.
	
	Denote $L_{j,k}:=L_f(x_{j,k},1/j)$,
	$\widehat{L}_{j,k}:=L_f(y_{j,k},1/j)$, and $\widehat{l}_{j,k}:=l_f(y_{j,k},1/j)$.
	For each $j\in\N$, define
	\[
	g_j:=2j\sum_{k}L_{j,k}\ch_{2B_{j,k}} +2j\sum_{k} \widehat{L}_{j,k} \ch_{2\widehat{B}_{j,k}}.
	\]
	By assumption,
	$1$-a.e. nonconstant curve in $\Om$ has empty intersection with $N$.
	Take such a curve $\gamma\colon [0,\ell_{\gamma}]\to\Om$, with $\ell_{\gamma}>0$.
	By a slight abuse of notation, denote also the image of a curve $\gamma$ by the same symbol.
	Now we have
	\[
	\gamma\subset A\cup D=\bigcup_{j=1}^{\infty}A_j \cup \bigcup_{j=1}^{\infty}D_j,
	\]
	where from the definitions we can see that $A_j\subset A_{j+1}$ and $D_j\subset D_{j+1}$ for every $j\in\N$.
	Thus
	\[
	\lim_{j\to\infty}\ch_{\bigcup_{k}B_{j,k}}(x)=1\ \ \textrm{for every }x\in A
	\quad\textrm{and}\quad
	\lim_{j\to\infty}\ch_{\bigcup_{k}\widehat{B}_{j,k}}(y)=1\ \ \textrm{for every }y\in D.
	\]
	If $\ell_{\gamma}\ge 1/j$ and $\gamma\cap B_{j,k}\neq \emptyset$, then
	\[
	2j\int_\gamma L_{j,k}\ch_{2B_{j,k}}\,ds
	\ge 2L_{j,k}
	\ge \diam f(B_{j,k}).
	\]
	The analogous fact holds for the balls $\widehat{B}_{j,k}$.
	Thus we get
	\begin{align*}
	\int_\gamma g_j\,ds	
	&\ge 2j\sum_{k\colon \gamma\cap B_{j,k}\neq \emptyset}\int_\gamma L_{j,k} \ch_{2B_{j,k}}\,ds
	+2j\sum_{k\colon \gamma\cap \widehat{B}_{j,k}\neq \emptyset}\int_\gamma \widehat{L}_{j,k} \ch_{2\widehat{B}_{j,k}}\,ds\\
	&\ge \sum_{k\colon \gamma\cap B_{j,k}\neq \emptyset}\diam f(B_{j,k})
	+\sum_{k\colon \gamma\cap \widehat{B}_{j,k}\neq \emptyset}\diam f(\widehat{B}_{j,k}).
	\end{align*}
	Applying Lemma \ref{lem:line behavior} with $h:=f\circ\gamma\colon [0,\ell_{\gamma}]\to Y$
	(assuming only $\ell_{\gamma}>0$), we get
	\begin{equation}\label{eq:ug estimate}
		\begin{split}
		&d_Y\big(f(\gamma(0)),f(\gamma(\ell_{\gamma}))\big)\\
		&\quad \le \liminf_{j\to\infty}\left(\sum_{k\colon \gamma\cap B_{j,k}\neq \emptyset}\diam f\circ\gamma(\gamma^{-1}(B_{j,k}))
		+\sum_{k\colon \gamma\cap \widehat{B}_{j,k}\neq \emptyset}\diam f\circ\gamma(\gamma^{-1}(\widehat{B}_{j,k}))\right)\\
		&\quad \le \liminf_{j\to\infty}\left(\sum_{k\colon \gamma\cap B_{j,k}\neq \emptyset}\diam f(B_{j,k})
		+\sum_{k\colon \gamma\cap \widehat{B}_{j,k}\neq \emptyset}\diam f(\widehat{B}_{j,k})\right)\\
		&\quad \le \liminf_{j\to\infty}\int_\gamma g_j\,ds.
		\end{split}
	\end{equation}
On the other hand, we estimate
\begin{equation}\label{eq:A1 estimate}
	\begin{split}
		2j\sum_{k} L_{j,k}\mu(2B_{j,k})
		&\le 2 C_d \sum_{k}\kappa(MB_{j,k})\big(\Lip_f^{\kappa,M}(x_{j,k})+\eps^{Q/(Q-1)}\big)\quad\textrm{by }\eqref{eq:Aj def}\\
		&\le 2 C_d \sum_{k}\int_{MB_{j,k}}(h+2\eps^{Q/(Q-1)})\,d\kappa\quad\textrm{by }\eqref{eq:Aj def 2}\\
		&\le 2 C_d C_M\int_{\Om}(h+2\eps^{Q/(Q-1)})\,d\kappa.
	\end{split}
\end{equation}
In general, note that if balls $B_1=B(z_1,r_1)$ and $B_2=B(z_2,r_2)$ are disjoint
and contained in $W$, then
from the definition of $l_f(\cdot,\cdot)$ and from the injectivity of $f$ it follows that
\begin{equation}\label{eq:injectivity for balls}
B(f(z_1),l_f(z_1,r_1))\cap B(f(z_2),l_f(z_2,r_2)) = \emptyset.
\end{equation}
Note also that the collection $\{B(y_{j,k},1/j)\}_{k=1}^{\infty}$ can be divided into at most
$C_M$ collections of pairwise disjoint balls.
In total, when $1/j< \beta$ we get
\begin{equation}\label{eq:finite nu measure estimate}
\sum_{k}\nu(B(f(y_{j,k}),\widehat{l}_{j,k}))\le C_M\nu(V)<\infty
\end{equation}
by \eqref{eq:finite image assumption}.
With $1/Q+(Q-1)/Q=1$,
by Young's inequality we have for any $b_1,b_2\ge 0$ that
\begin{equation}\label{eq:generalized Young}
	b_1b_2=\eps^{1/Q} b_1 \eps^{-1/Q} b_2
	\le \frac{1}{Q}\eps b_1^Q + \frac{Q-1}{Q}\eps^{-1/(Q-1)} b_2^{Q/(Q-1)}
	\le \eps b_1^Q + \eps^{-1/(Q-1)} b_2^{Q/(Q-1)}.
\end{equation}
Consider $j\in\N$ large enough that $1/j< \beta$. We estimate
\begin{align*}
		&2j\sum_{k}\widehat{L}_{j,k} \mu(2\widehat{B}_{j,k})\\
		&\ \  \le 2j\sum_{k}\widehat{l}_{j,k}\mu(2\widehat{B}_{j,k})
		\left(\frac{\kappa(M\widehat{B}_{j,k})}{\mu(M\widehat{B}_{j,k})}\right)^{(Q-1)/Q}
		(H_f^{\kappa,M}(y_{j,k})+\eps)\quad\textrm{by }\eqref{eq:Dj def}\\
		&\ \  \le 2C_d  C_{\Om}^{1/Q} \sum_{k} \widehat{l}_{j,k} \kappa(M\widehat{B}_{j,k})^{(Q-1)/Q} (H_f^{\kappa,M}(y_{j,k})+\eps)
		\quad\textrm{by }\eqref{eq:measure lower and upper bound}\\
		&\ \  \le (2C_d)^Q C_{\Om}\eps \sum_{k}\widehat{l}_{j,k}^Q
		+\eps^{-1/(Q-1)}\sum_{k} (H_f^{\kappa,M}(y_{j,k})+\eps)^{Q/(Q-1)}\kappa(M\widehat{B}_{j,k})\quad\textrm{by }
		\eqref{eq:generalized Young}\\
		&\ \  \le (2C_d)^Q C_{\Om}^2\eps  \sum_{k}\nu(B(f(y_{j,k}),\widehat{l}_{j,k}))
		\quad \textrm{by }\eqref{eq:measure lower and upper bound}
		\\
		&\qquad \qquad +2^{Q/(Q-1)}\eps^{-1/(Q-1)}\sum_{k} \left(H_f^{\kappa,M}(y_{j,k})^{Q/(Q-1)}+\eps^{Q/(Q-1)}\right)\kappa(M\widehat{B}_{j,k})\\
		&\ \ \le (2C_d)^Q C_{\Om}^2 C_M \eps  \nu(V)
		+C_M 2^{Q/(Q-1)} \eps^{-1/(Q-1)}\int_{\Om}(h+2\eps^{Q/(Q-1)})\,d\kappa
	\end{align*}
by \eqref{eq:finite nu measure estimate} and \eqref{eq:Dj def 2}.
By combining this with \eqref{eq:A1 estimate}, we get
\begin{equation}\label{eq:g energy estimate}
	\begin{split}
		\int_{\Om}g_j\,d\mu
		& = 2j\sum_{k} L_{k} \mu(2B_{j,k})
		+2j\sum_{j,k}\widehat{L}_{j,k} \mu(2\widehat{B}_{j,k})\\
		&  \le   2^{2+Q/(Q-1)} C_d C_M \eps^{-1/(Q-1)} \int_{\Om}(h+2\eps^{Q/(Q-1)})\,d\kappa
		+(2C_d)^Q C_{\Om}^2 C_M \eps \nu(V)\\
		&  \le   C_1 \eps^{-1/(Q-1)} \int_{\Om}h\,d\kappa+ C_1\eps\kappa(\Om)+C_1 \eps \nu(V)\\
		& <\infty
	\end{split}
\end{equation}
by the assumption \eqref{eq:A1A2 assumption}, with
$C_1:=2^{3+Q+Q/(Q-1)} C_{\Om}^2 C_d^{Q+\lceil \log_2 (18 M)\rceil}$.
By \eqref{eq:ug estimate}, for $1$-a.e. curve $\gamma\colon [0,\ell_{\gamma}]\to \Om$ we have
\[
d_Y(f(\gamma(0)),f(\gamma(\ell_{\gamma})))\le
\liminf_{j\to\infty}\int_{\gamma}g_j\,ds.
\]
By the properties of modulus,
see e.g. \cite[Lemma 1.34]{BB},
it follows that for $1$-a.e. curve $\gamma\colon [0,\ell_{\gamma}]\to \Om$, we have in fact
\[
d_Y(f(\gamma(t_1)),f(\gamma(t_2)))
\le \liminf_{j\to\infty}\int_{\gamma|_{[t_1,t_2]}}g_j\,ds
\]
for every $0\le t_1<t_2\le \ell_{\gamma}$.
Thus $f\in D^{\BV}(\Om;Y)$ by Definition \ref{def:BV def},
and by \eqref{eq:g energy estimate} we have \eqref{eq:BV regularity claim}.\\
\end{proof}

In the remaining two theorems, we show that if $d\kappa=a\,d\mu$
and either $\Lip_f^{a,M}$ or $H_f^{a,M}$ is not too large,
then we can get Newton-Sobolev instead of just BV regularity.

\begin{theorem}\label{thm:main theorem 2}
	Suppose $\Om$ is nonempty, open, and bounded,
	$f\colon\Om\to Y$,
	$M\ge 1$, $1\le p<\infty$,
	$a\in L^p(\Om)$ is a nonnegative function whose integral over every
	nonempty open subset of $\Om$ is nonzero,
	and suppose that $\Om$ admits a partition into disjoint sets $A$ and $N$ such that
	$\Lip_f^{a,M}<\infty$ in $A$,
	$\Mod_p(\Gamma_N)=0$, and
	\begin{equation}\label{eq:hap assumption}
	\int_\Om (ha)^p\,d\mu<\infty\quad\textrm{for a function }h\ge \Lip_f^{a,M}\ch_A.
	\end{equation}
	Furthermore, suppose that for some $0<C_{\Om}<\infty$ and for all $r>0$, we have
	\begin{equation}\label{eq:measure lower and upper bound 2}
		\frac{\mu(B(x,r))}{r^Q}\le C_{\Om}
		\ \textrm{ for all }x\in \Om.
	\end{equation}
	Then $f\in D^{p}(\Om;Y)$ with
	\begin{equation}\label{eq:C2 energy estimate}
		\Vert f\Vert_{D^p(\Om;Y)}\le C_2\Vert ha \Vert_{L^p(\Om)},
	\end{equation}
	where $C_2$ only depends on $C_d$, $p$, and $M$.
\end{theorem}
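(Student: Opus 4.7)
The plan is to follow the proof of Theorem \ref{thm:main theorem} closely, dropping the distortion part (since only $\Lip_f^{a,M}$, and not $H_f^{a,M}$, appears in our hypotheses) and upgrading the conclusion from the sequential BV formulation to an honest $p$-weak upper gradient in $L^p$. First I would apply Theorem \ref{thm:VitaliCar} to replace $h$ by a lower semicontinuous majorant. Then, with $d\kappa:=a\,d\mu$, for each $j\in\N$ I define $A_j\subset A$ exactly as in \eqref{eq:Aj def}--\eqref{eq:Aj def 2} (with the $\Lip_f^{a,M}$ conditions only, and no distortion conditions); this yields $A=\bigcup_j A_j$ with $A_j\subset A_{j+1}$. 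I then cover $A_j$ by balls $B_{j,k}=B(x_{j,k},1/j)$, $x_{j,k}\in A_j$, with $(M+1)B_{j,k}\subset\Om$ and so that $\{(M+1)B_{j,k}\}_k$ decomposes into at most $C_M$ pairwise disjoint subcollections, and, setting $L_{j,k}:=L_f(x_{j,k},1/j)$, define
\[
g_j:=2j\sum_k L_{j,k}\ch_{2B_{j,k}}.
\]

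Next I would verify two properties of $g_j$. For the asymptotic upper gradient inequality, applying Lemma \ref{lem:line behavior} to $f\circ\gamma$ on the open cover $\{\gamma^{-1}(B_{j,k})\}_k$ of $\gamma\cap A$ as in \eqref{eq:ug estimate}, and discarding curves that meet $N$ via Lemma \ref{lem:curve intersects set} together with $\Mod_p(\Gamma_N)=0$, yields
\[
d_Y(f(\gamma(0)),f(\gamma(\ell_\gamma)))\le\liminf_{j\to\infty}\int_\gamma g_j\,ds
\]
for $p$-a.e.\ curve $\gamma\colon[0,\ell_\gamma]\to\Om$. For the $L^p$ energy bound, I would carry out a $p$-th power version of the estimate \eqref{eq:A1 estimate}: starting from $L_{j,k}\le(1/j)(\Lip_f^{a,M}(x_{j,k})+\eps)\vint{MB_{j,k}}a\,d\mu$, I apply Jensen's inequality to pull the $p$-th power inside the average of $a$, use doubling to bound $\mu(2B_{j,k})/\mu(MB_{j,k})$, integrate the pointwise inequality $(\Lip_f^{a,M}(x_{j,k})+\eps)^p\le(h(z)+2\eps)^p$ (valid for $z\in MB_{j,k}$) against $a^p$, and sum using the bounded overlap of $\{MB_{j,k}\}_k$ together with the inequality $\bigl(\sum_k\alpha_k\ch_{2B_{j,k}}\bigr)^p\le C_M^{p-1}\sum_k\alpha_k^p\ch_{2B_{j,k}}$. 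This gives
\[
\int_\Om g_j^p\,d\mu\le C_2^p\int_\Om(h+2\eps)^p a^p\,d\mu
\]
with $C_2$ depending only on $C_d$, $p$, and $M$.

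Finally I would extract a $p$-weak upper gradient in $L^p$ and conclude. For $p>1$, reflexivity of $L^p$ produces a weak subsequential limit $g$ of $\{g_j\}$; Theorem \ref{thm:Mazur lemma} yields convex combinations $\widehat g_i=\sum_{j\ge i}a_{i,j}g_j$ with $\widehat g_i\to g$ in $L^p$, and Lemma \ref{lem:Fuglede lemma} then gives $\int_\gamma\widehat g_i\to\int_\gamma g$ for $p$-a.e.\ curve. Since $\int_\gamma\widehat g_i\ge\inf_{j\ge i}\int_\gamma g_j$ (because $\sum_{j\ge i}a_{i,j}=1$), passing to $\liminf$ in $i$ yields $\int_\gamma g\ge\liminf_j\int_\gamma g_j\ge d_Y(f(\gamma(0)),f(\gamma(\ell_\gamma)))$ for $p$-a.e.\ curve, so $g$ is the required $p$-weak upper gradient with $\|g\|_{L^p}\le C_2\|(h+2\eps)a\|_{L^p}$; a diagonal argument as $\eps\to 0$ then gives \eqref{eq:C2 energy estimate}. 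The main obstacle is the case $p=1$, where $L^1$ is not reflexive and Theorem \ref{thm:dunford-pettis} requires equi-integrability of $\{g_j\}$; I expect this to follow by dominating $g_j$ (up to a constant) by an uncentered Hardy--Littlewood maximal function of $(h+2\eps)a\in L^1(\Om)$ and using the weak-$L^1$ maximal inequality to control $\int_{\{g_j>\lambda\}}g_j\,d\mu$ uniformly in $j$, but this is the only step where essentially new input beyond the template of Theorem \ref{thm:main theorem} is needed.
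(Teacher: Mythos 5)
Your outline matches the paper's proof almost step for step: Vitali--Carath\'eodory to make $h$ lower semicontinuous, the nested exhaustion $A=\bigcup_j A_j$ with the two $\Lip_f^{a,M}$-conditions, the bounded-overlap covers $\{B_{j,k}\}$, the test functions $g_j=2j\sum_k L_{j,k}\ch_{2B_{j,k}}$, the $\liminf$-type upper gradient inequality via Lemma~\ref{lem:line behavior}, the $L^p$-energy bound (what you call Jensen is the same H\"older step used in the paper), and, for $p>1$, reflexivity, Mazur, and Fuglede. The remark that $\int_\gamma\widehat g_i\ge\inf_{j\ge i}\int_\gamma g_j$ is exactly the observation needed to pass from the convex combinations to the weak limit $g$.

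The one place you deviate --- and correctly flag as the point needing ``essentially new input'' --- is equi-integrability in the case $p=1$, and there your proposed route has a genuine gap. Dominating $g_j\lesssim_{C_d,M}\mathcal M\phi$ for $\phi:=(h+2\eps)a\in L^1(\Om)$ is true, but the weak-$L^1$ maximal inequality only gives $\mu(\{g_j>\lambda\})\lesssim\lambda^{-1}\|\phi\|_1$; it does not control $\int_{\{g_j>\lambda\}}g_j\,d\mu$, and in general a sequence bounded in $L^1$ and dominated by a single weak-$L^1$ function need not be equi-integrable (think of translates of an $L^1$-normalized bump sliding under a fixed $\mathcal M\phi\notin L^1$). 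What actually makes the argument close is not the maximal function but the \emph{single-scale averaging} structure: on $2B_{j,k}$ one has $g_j\lesssim C_M\vint{B(\cdot,(M+2)/j)}\phi\,d\mu$, and averaging operators $T_j\phi:=\vint{B(\cdot,c/j)}\phi\,d\mu$ applied to a fixed $\phi\in L^1$ form an equi-integrable family (truncate $\phi=\phi_1+\phi_2$ with $\phi_1$ bounded and $\|\phi_2\|_1$ small; $T_j\phi_1$ is uniformly bounded on the finite-measure $\Om$, and $\|T_j\phi_2\|_1\lesssim\|\phi_2\|_1$). The paper encodes the same idea differently: it argues by contradiction, splits the indices $k$ according to whether the bad set $H_j$ occupies a small or large fraction of $(M+1)B_{j,k}$, uses the energy estimate to bound the ``small-fraction'' part by $\alpha/2$, and uses absolute continuity of $\int(h+2)a$ over a shrinking set to kill the ``large-fraction'' part. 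Either version works; the point to absorb is that one must exploit the averaging structure of $g_j$ against a fixed $L^1$ majorant, not merely the maximal-function domination.
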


\begin{proof}
	Fix $0<\eps\le 1$.
	By the Vitali-Carath\'eodory theorem (Theorem \ref{thm:VitaliCar}), we can assume that $h$ is
	lower semicontinuous.
	We have $A=\bigcup_{j=1}^{\infty}A_j$, where $A_j$ consists of those points $x\in A\cap \Om(M/j)$ for which
	\begin{equation}\label{eq:Aj def with a}
		\sup_{0<r\le 1/j}
		\frac{L_f(x,r)}{r}\left(\vint{B(x,Mr)}a\,d\mu\right)^{-1}
		\le \Lip_f^{a,M}(x)+\eps
	\end{equation}
	and
	\begin{equation}\label{eq:Aj def 2 with a}
		\Lip_f^{a,M}(x)\le h(z) +\eps
		\quad\textrm{for all }z\in B(x,M/j).
	\end{equation}
	For any fixed $j\in\N$, we find a collection $\{B_{j,k}=B(x_{j,k},1/j)\}_{k}$ covering $A_j$ with $x_{j,k}\in A_j$,
	and such that the balls $(M+1)B_{j,k}$ can be divided into at most $C_{M}$
	collections of pairwise disjoint balls.
	
	Denote $L_{j,k}:=L_f(x_{j,k},1/j)$.
	For each $j\in\N$, define
	\[
	g_j:=2j\sum_{k}L_{j,k}\ch_{2B_{j,k}}.
	\]
	By assumption, $p$-a.e nonconstant curve in $\Om$ avoids $N$.
	Hence, just as in the estimates leading to \eqref{eq:ug estimate}, for $p$-a.e. curve $\gamma$ in $\Om$
	we get
		\begin{equation}\label{eq:ug estimate in lip case}
			d_Y(f(\gamma(0)),f(\gamma(\ell_{\gamma})))
			\le \liminf_{j\to\infty}\int_\gamma g_j\,ds.
	\end{equation}
	Then we estimate
	\begin{equation}\label{eq:A1 estimate }
		\begin{split}
			j^p\sum_{k} L_{j,k}^p\mu(2B_{j,k})
			& \le  \sum_{k}\mu(2B_{j,k})\left(\vint{MB_{j,k}}a\,d\mu\right)^p(\Lip_f^{a,M}(x_{j,k})+\eps)^p\quad\textrm{by }\eqref{eq:Aj def with a}\\
			& \le  \sum_{k}\mu(2B_{j,k})\left(\vint{MB_{j,k}}(h+2\eps)a\,d\mu\right)^p\quad\textrm{by }\eqref{eq:Aj def 2 with a}\\
			& \le  \sum_{k}\mu(2B_{j,k})
			\vint{MB_{j,k}}(h+2\eps)^pa^p\,d\mu\quad\textrm{by H\"older}\\
			& \le  C_d
				\sum_{k}\int_{MB_{j,k}}(h+2\eps)^pa^p\,d\mu\\
			& \le  C_d C_M\int_{\Om}(h+2\eps)^pa^p\,d\mu.
		\end{split}
	\end{equation}
	Thus for all $j\in\N$, we get
	\begin{equation}\label{eq:g energy estimate p}
		\begin{split}
			\int_{\Om}g_j^p\,d\mu
			&\le \int_{\Om}\left(2j\sum_{k} L_{j,k}\ch_{2B_{j,k}}\right)^p\,d\mu\\
			&\le 2^pC_M^p j^p \sum_{k} L_{j,k}^p\mu(2B_{j,k})\\
			&\le 2^p C_d C_M^{p+1} \int_{\Om}(h+2\eps)^p a^p\,d\mu\quad\textrm{by }\eqref{eq:A1 estimate }\\
			&\le 2^{2p} C_d C_M^{p+1} \left(\int_{\Om}h^p a^p\,d\mu+(2\eps)^p\int_{\Om}a^p \,d\mu\right)\quad\\
			&<\infty\quad\textrm{by }\eqref{eq:hap assumption}.
		\end{split}
	\end{equation}
	Hence $\{g_j\}_{j=1}^{\infty}$ is a bounded sequence in $L^p(\Om)$.
	
		\textbf{Case $p=1$:}
		
		Recall that we defined
		\[
		g_j= 2j\sum_{k} L_{j,k} \ch_{2B_{j,k}},\quad j\in\N.
		\]
		By \eqref{eq:g energy estimate p}, this is a bounded sequence in $L^1(\Om)$.
		We will show that it is equi-integrable.
		The first condition of Definition \ref{def:equiintegrability}
		holds automatically since $\Om$ as a bounded set has finite $\mu$-measure.
		We check the second condition.
		
		Suppose by contradiction that
		we find $0<\alpha<1$, an infinite set $J\subset \N$, and
		a sequence of $\mu$-measurable sets $H_j\subset \Om$ such that $\mu(H_j)\to 0$ and
		\begin{equation}\label{eq:equiintegrability contradiction}
			j\int_{H_j} \sum_{k} L_{j,k}\ch_{2B_{j,k}}\,d\mu\ge \alpha\quad\textrm{for all }j\in J.
		\end{equation}
		Choose $L$ to be the following (very large) number:
		\begin{equation}\label{eq:M choice }
			L:=\frac{2 C_d^{1+\lceil 
					\log_2 M\rceil} C_M}{\alpha} \int_{\Om}(h+2)a\,d\mu.
		\end{equation}
		For every $j\in\N$, define two sets of indices $I_1^{j}$ and $I_2^{j}$ as follows:
		for $k\in I_1^{j}$, we have
		\[
		\frac{\mu((M+1)B_{j,k}\cap H_j)}{\mu((M+1)B_{j,k})}\le \frac{1}{L},
		\]
		and $I_2^{j}$ consists of the remaining indices.
		We estimate
		\begin{equation}\label{eq:I1i estimate}
			\begin{split}
				 j\sum_{k\in I^j_1} L_{j,k}\mu(2B_{j,k}\cap H_j)
				&\le j\sum_{k\in I^j_1} L_{j,k} \mu((M+1)B_{j,k}\cap H_j)\\
				&\le \frac{j}{L}\sum_{k\in I^j_1}L_{j,k}\mu((M+1)B_{j,k})\\
				&\le \frac{C_d^{\lceil 
						\log_2 M\rceil}j}{L}\sum_{k\in I^j_1}L_{j,k}\mu(2B_{j,k})\\
				&\le \frac{C_d^{1+\lceil 
						\log_2 M\rceil} C_M}{L} \int_{\Om}(h+2)a\,d\mu\quad\textrm{by }\eqref{eq:A1 estimate }\\
				&= \frac{\alpha}{2}\quad\textrm{by }\eqref{eq:M choice }.
			\end{split}
		\end{equation}
		Next, we estimate
		\begin{equation}\label{eq:I2 measure estimate}
			\begin{split}
			\mu\left(\bigcup_{k\in I_2^j}MB_{j,k}\right)
			&\le \sum_{k\in I_2^j}\mu((M+1)B_{j,k})\\
			&\le   L\sum_{k\in I_2^j}\mu((M+1)B_{j,k}\cap H_j)
			\le  L  C_M \mu(H_j)
			\to 0\quad\textrm{as }j\to\infty.
			\end{split}
		\end{equation}
		By writing the first four lines of \eqref{eq:A1 estimate }
		with the sums over the indices $k\in I_2^j$, we get
		\begin{align*}
			j\sum_{k\in I_2^j} L_{j,k}\mu(2B_{j,k})
			& \le  C_d  \sum_{k\in I_2^j}\int_{MB_{j,k}}(h+2)a\,d\mu\\
			& \le  C_d C_M \int_{\bigcup_{k\in I_2^j}MB_{j,k}}(h+2)a\,d\mu\\
			& \to 0\quad\textrm{as }j\to\infty
		\end{align*}
		by \eqref{eq:I2 measure estimate} and by the absolute continuity of integrals.
		Combining this with \eqref{eq:I1i estimate}, we get
		\[
		\limsup_{j\to\infty}\int_{H_j}j\sum_{k} L_{j,k}\ch_{2B_{j,k}}\,d\mu\le \frac{\alpha}{2}.
		\]
		This contradicts \eqref{eq:equiintegrability contradiction} and proves the equi-integrability
		of $\{g_j\}_{j=1}^{\infty}$.
		Now by the Dunford--Pettis theorem (Theorem \ref{thm:dunford-pettis}), we find $g\in L^1(\Om)$
		such that by passing to a subsequence (not relabeled), we have
		$g_j\to g$ weakly in $L^1(\Om)$.
		By Mazur's lemma (Theorem \ref{thm:Mazur lemma}), for suitable convex combinations we get the
		strong convergence $\sum_{l=j}^{N_j}a_{j,l} g_l\to g$ in $L^1(\Om)$.
		From \eqref{eq:ug estimate in lip case} and Fuglede's lemma (Lemma \ref{lem:Fuglede lemma}), we get
		\[
				d_Y(f(\gamma(0)),f(\gamma(\ell_{\gamma})))
				\le \liminf_{j\to\infty}\int_{\gamma}\sum_{l=j}^{N_j}a_{j,l} g_l\,ds
				=\int_{\gamma}g \,ds
		\]
		for $1$-a.e. curve $\gamma$ in $\Om$.
		Thus $g\in L^1(\Om)$ is a $1$-weak upper gradient of $f$ in $\Om$, and so $f\in D^1(\Om;Y)$.
		
		\textbf{Case $1<p<\infty$:}
		
		By \eqref{eq:g energy estimate p}, $\{g_j\}_{j=1}^{\infty}$ is a bounded sequence in $L^p(\Om)$.
		By reflexivity of the space $L^p(\Om)$, we find a subsequence of $\{g_j\}_{j=1}^{\infty}$ (not relabeled)
		and $g\in L^p(\Om)$ such that $g_j\to g$ weakly in $L^p(\Om)$ (see e.g. \cite[Section 2]{HKSTbook}).
		By Mazur's lemma (Theorem \ref{thm:Mazur lemma}), for suitable convex combinations we get the
		strong convergence $\sum_{l=j}^{N_j}a_{j,l} g_l\to g$ in $L^p(\Om)$.
		By \eqref{eq:ug estimate in lip case} and Fuglede's lemma (Lemma \ref{lem:Fuglede lemma}),
		we obtain for $p$-a.e. curve $\gamma$ in $\Om$ that
		\[
		d_Y(f(\gamma(0)),f(\gamma(\ell_{\gamma})))
		\le \liminf_{j\to\infty}\int_{\gamma}\sum_{l=j}^{N_j}a_{j,l} g_l\,ds
		= \int_{\gamma}g\,ds.
		\]
		Hence $f\in D^p(\Om;Y)$.
		
		For all $1\le p<\infty$, by \eqref{eq:g energy estimate p} we get
		\[
		\Vert f\Vert_{D^p(\Om;Y)}\le \Vert g\Vert_{L^p(\Om)}
		\le\limsup_{j\to\infty}\Vert g_j\Vert_{L^p(\Om)}
		\le C_2(\Vert h a\Vert_{L^{p}(\Om)}^p+(2\eps)^2\Vert a\Vert_{L^p(\Om)}^p)^{1/p}
		\]
		with $C_2=4 C_d^{1/p+(1+1/p)\lceil \log_2(18M)\rceil}$. Letting $\eps\to 0$, this
		proves \eqref{eq:C2 energy estimate}.\\
	\end{proof}

Recall that $1<Q<\infty$ is a fixed parameter,
and that for $1\le p\le Q$,
we denote the Sobolev conjugate by $p^*=Qp/(Q-p)$ when $p<Q$,
and $p^*=\infty$ when $p=Q$.

\begin{theorem}\label{thm:main theorem 3}
	Suppose $\Om$ is nonempty, open, and bounded,
	$f\colon\Om\to Y$ is injective, $M\ge 1$, $1\le p\le Q$,
	$a\in  L^{p^*(Q-1)/Q}(\Om)$ is a nonnegative function whose integral over every
	nonempty open subset of $\Om$ is nonzero,
	and suppose $\Om$ admits a partition into disjoint sets $D$ and $N$ such that
	$H_f^{a,M}<\infty$ in $D$,
	$\mu(N)=\Mod_p(\Gamma_N)=0$, and
	\begin{equation}\label{eq:p star assumption}
	\Vert h a^{p^*(Q-1)/Q}\Vert_{L^{1}(\Om)}<\infty
	\quad\textrm{for a function }h\ge (H_f^{a,M})^{p^*}\ch_D
	\end{equation}
	in the case $1\le p<Q$, and $ \Vert H_f^{a,M}\Vert_{L^{\infty}(\Om)}<\infty$
	in the case $p=Q$.

	Furthermore, suppose that for some $0<C_{\Om}<\infty$ and for all $r>0$,
	\begin{equation}\label{eq:measure lower and upper bound 3}
		\frac{\mu(B(x,r))}{r^Q}\le C_{\Om}
		\ \textrm{ for all }x\in \Om
		\quad\textrm{and}\quad
		\frac{\nu(B(f(x),r))}{r^Q}\ge C_{\Om}^{-1}
		\ \textrm{ for all }x\in D,
	\end{equation}
	and that there is $\beta>0$ such that
	\begin{equation}\label{eq:finite image assumption 3}
		\nu(V)<\infty\quad\textrm{for }V:=\bigcup_{y\in D}B(f(y),l_f(y,\beta)).
	\end{equation}
	Then $f\in D^{p}(\Om;Y)$ with
	\begin{equation}\label{eq:C3 energy estimate}
		\Vert f\Vert_{D^p(\Om;Y)}^p \le C_3\nu(V)+C_3\Vert h a^{p(Q-1)/(Q-p)}\Vert_{L^{1}(\Om)}
	\end{equation}
	in the case $1\le p<Q$,
	and
	\begin{equation}\label{eq:C3 energy estimate Q}
		\Vert f\Vert_{D^Q(\Om;Y)}^Q \le C_4 \nu(V) \Vert a\Vert_{L^{\infty}(\Om)}^{Q-1}
		\Vert H_f^{a,M}\Vert_{L^{\infty}(\Om)}^Q
	\end{equation}
	in the case $p=Q$.
	The constants $C_3,C_4$ only depend on $C_d$, $C_{\Om}$, $p$, $Q$, and $M$.
\end{theorem}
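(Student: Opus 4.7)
The plan is to adapt the proof of Theorem \ref{thm:main theorem 2}, replacing the Lipschitz covering of $A$ with a distortion-based covering of $D$, and then to combine it with the $H_f$ half of the proof of Theorem \ref{thm:main theorem}. The finite measure $\nu(V)$ from \eqref{eq:finite image assumption 3}, together with the injectivity of $f$ and the lower bound on $\nu$ in \eqref{eq:measure lower and upper bound 3}, will take over the role played by $\kappa(\Om)$ in Theorem \ref{thm:main theorem}. By Theorem \ref{thm:VitaliCar} I may assume that $h$ is lower semicontinuous. Fixing $0<\eps\le 1$ and writing $d\kappa=a\,d\mu$, I write $D=\bigcup_{j=1}^\infty D_j$, where $D_j$ consists of those $y\in D\cap \Om(M/j)$ with
\[
\sup_{0<s\le 1/j}\frac{L_f(y,s)}{l_f(y,s)}\left(\frac{\mu(B(y,Ms))}{\kappa(B(y,Ms))}\right)^{(Q-1)/Q}\le H_f^{a,M}(y)+\eps,
\]
and (when $p<Q$) also $(H_f^{a,M}(y))^{p^*}\le h(z)+\eps^{p^*}$ for every $z\in B(y,M/j)$. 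I then choose a cover $\{\widehat B_{j,k}=B(y_{j,k},1/j)\}_k$ of $D_j$ with $y_{j,k}\in D_j$ whose $(M+1)$-fold enlargements split into at most $C_M$ subfamilies of pairwise disjoint balls, set $\widehat L_{j,k}:=L_f(y_{j,k},1/j)$ and $\widehat l_{j,k}:=l_f(y_{j,k},1/j)$, and define
\[
g_j:=2j\sum_k\widehat L_{j,k}\ch_{2\widehat B_{j,k}}.
\]
Since $\Mod_p(\Gamma_N)=0$, the argument of \eqref{eq:ug estimate} via Lemma \ref{lem:line behavior} produces the upper gradient inequality $d_Y(f(\gamma(0)),f(\gamma(\ell_\gamma)))\le \liminf_j\int_\gamma g_j\,ds$ for $p$-a.e. curve $\gamma$ in $\Om$.

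The core of the proof is the bound on $\Vert g_j\Vert_{L^p(\Om)}^p$. The bounded overlap of the $2\widehat B_{j,k}$ reduces this to $C\sum_k j^p\widehat L_{j,k}^p\mu(2\widehat B_{j,k})$, and the definition of $H_f^{a,M}$ gives
\[
\widehat L_{j,k}\le \widehat l_{j,k}(H_f^{a,M}(y_{j,k})+\eps)\left(\vint{M\widehat B_{j,k}}a\,d\mu\right)^{(Q-1)/Q},
\]
while \eqref{eq:measure lower and upper bound 3} yields $\mu(2\widehat B_{j,k})\le C j^{-Q}$. In the case $1\le p<Q$, I apply Young's inequality per term with conjugate exponents $Q/p$ and $Q/(Q-p)$ to peel off the factor $\widehat l_{j,k}^p$ from the rest. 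Using the identities $p^*=Qp/(Q-p)$ and $p^*(Q-p)/p=Q$, and Jensen's inequality (valid since $p^*(Q-1)/Q\ge 1$ for all $p\ge 1$), the resulting $(\widehat l_{j,k}^p)^{Q/p}=\widehat l_{j,k}^Q$ terms are controlled via the lower bound in \eqref{eq:measure lower and upper bound 3} and the injectivity of $f$, exactly as around \eqref{eq:injectivity for balls}--\eqref{eq:finite nu measure estimate}: within each of the $C_M$ subfamilies the image balls $B(f(y_{j,k}),\widehat l_{j,k})$ are pairwise disjoint and contained in $V$ once $1/j<\beta$, giving $\sum_k\widehat l_{j,k}^Q\le C\nu(V)$. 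The complementary ``$B^{Q/(Q-p)}$'' terms reduce, via the choice of $D_j$ and a doubling argument, to at most $C\int_{M\widehat B_{j,k}}(h+\eps)a^{p(Q-1)/(Q-p)}\,d\mu$. Summing, choosing the Young parameter to be of order one, and letting $\eps\to 0$ gives \eqref{eq:C3 energy estimate}. The case $p=Q$ is more direct: the $L^\infty$ bounds on $a$ and $H_f^{a,M}$ yield $j^Q\widehat L_{j,k}^Q\mu(2\widehat B_{j,k})\le C\Vert H_f^{a,M}\Vert_{L^\infty(\Om)}^Q\Vert a\Vert_{L^\infty(\Om)}^{Q-1}\widehat l_{j,k}^Q$, and summing against the $\nu(V)$ bound yields \eqref{eq:C3 energy estimate Q}.

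To pass to the limit, when $p>1$ I use reflexivity of $L^p(\Om)$ to extract a weakly convergent subsequence of $\{g_j\}$ and then upgrade via Mazur's lemma (Theorem \ref{thm:Mazur lemma}) and Fuglede's lemma (Lemma \ref{lem:Fuglede lemma}) to obtain a $p$-weak upper gradient $g\in L^p(\Om)$ of $f$ in $\Om$, exactly as in the $1<p<\infty$ case of Theorem \ref{thm:main theorem 2}. For $p=1$ I need equi-integrability of $\{g_j\}$ before applying the Dunford--Pettis theorem (Theorem \ref{thm:dunford-pettis}); I split the indices into subsets $I_1^j,I_2^j$ according to the relative $\mu$-overlap of $(M+1)\widehat B_{j,k}$ with a test set $H\subset\Om$ of small measure, as in the proof of Theorem \ref{thm:main theorem 2}. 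The $I_1^j$ piece is small by the uniform $L^1$ bound scaled by $1/L$, and in the $I_2^j$ piece the smallness of $\mu(\bigcup_{k\in I_2^j}M\widehat B_{j,k})$ together with the absolute continuity of the integral of $(h+\eps)a$ handles the $B^{Q/(Q-1)}$ contribution from Young's inequality, while a small Young parameter $\delta$ takes care of the $\widehat l_{j,k}^Q$ contribution uniformly in $H$.

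The main technical obstacle is the exponent bookkeeping in the Young step when $1\le p<Q$: the appearance of $p^*$ in \eqref{eq:p star assumption} and of $p(Q-1)/(Q-p)=p^*(Q-1)/Q$ on $a$ in \eqref{eq:C3 energy estimate} reflect exactly the conjugate pair $(Q/p,Q/(Q-p))$ together with the identity $p^*=Qp/(Q-p)$, so that the two halves of Young's inequality land precisely on $\widehat l_{j,k}^Q$ (controlled by $\nu(V)$) and on $ha^{p(Q-1)/(Q-p)}$ (controlled by \eqref{eq:p star assumption}); any other choice of decoupling would fail to recover the stated energy bound. A secondary subtlety is the equi-integrability argument when $p=1$, which requires simultaneously a small Young parameter to absorb the $\nu(V)$ contribution and absolute continuity of $L^1$-integrals to kill the remainder.
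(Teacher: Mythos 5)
Your treatment of the case $1\le p<Q$ matches the paper's Part 1 essentially point for point: the decomposition $D=\bigcup_j D_j$ with the $\sup$ condition and the lower-semicontinuity buffer from Theorem \ref{thm:VitaliCar}, the covering by balls of radius $1/j$ with bounded overlap, the Young step with conjugate exponents $Q/p$ and $Q/(Q-p)$ peeling off $\widehat l_{j,k}^Q$ (controlled by $\nu(V)$ via \eqref{eq:measure lower and upper bound 3}, injectivity, and the disjointness in \eqref{eq:injectivity for balls}) from $ha^{p(Q-1)/(Q-p)}$, the Jensen/H\"older step (your exponent check $p(Q-1)/(Q-p)=p^*(Q-1)/Q$ is correct), the reflexivity/Mazur/Fuglede limit for $1<p<Q$, and the $I_1^j,I_2^j$ equi-integrability argument for $p=1$ using a small Young parameter $\delta$ for the $\widehat l_{j,k}^Q$ contribution. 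All of this is correct.

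Your argument for $p=Q$ has a genuine gap. You claim that
\[
j^Q\widehat L_{j,k}^Q\mu(2\widehat B_{j,k})\le C\,\Vert H_f^{a,M}\Vert_{L^\infty(\Om)}^Q\,\Vert a\Vert_{L^\infty(\Om)}^{Q-1}\,\widehat l_{j,k}^Q,
\]
but this requires $H_f^{a,M}(y_{j,k})\le \Vert H_f^{a,M}\Vert_{L^\infty(\Om)}$ at the specific center points $y_{j,k}\in D_j$. The $L^\infty$ norm is an essential supremum, so this pointwise inequality need not hold; the theorem only gives pointwise finiteness of $H_f^{a,M}$ on $D$, not a uniform pointwise bound. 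Moreover, you cannot simply move the bad $\mu$-null set $\{H_f^{a,M}>\Vert H_f^{a,M}\Vert_{L^\infty}\}$ into $N$, because the hypothesis is $\Mod_Q(\Gamma_N)=0$ and a $\mu$-null set need not be $\Mod_Q$-negligible; curves meeting that set would never satisfy $\ch_{W_j}\to 1$, so Lemma \ref{lem:line behavior} would not yield the upper gradient inequality for them.

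For exactly this reason the paper follows a longer route when $p=Q$. It defines $D^j$ using only the pointwise finiteness ($\sup_{0<s\le 1/j} L_f/l_f\le j$), obtaining $L^Q$ bounds on the $g_k$ that are finite but scale like $j^Q$ and hence are not uniform. Mazur and Fuglede then give, for each fixed $j$, the inequality $\int_{\gamma'}g\,ds\ge \mathcal H^1_{\infty}(f(\gamma'\cap D^j))$ on $Q$-a.e. subcurve, which is upgraded to the Lusin (N) property: for $Q$-a.e.\ $\gamma$, $\mathcal L^1(S)=0\Rightarrow\mathcal H^1(f(\gamma(S)))=0$. Combined with Lemma \ref{lem:discontinuity set} (continuity of $f\circ\gamma$ for $Q$-a.e. curve) and Lemma \ref{lem:abs cont}, this shows $\lip_f$ is a $Q$-weak upper gradient. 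The $L^Q$ bound on $\lip_f$ is then obtained from the $\mu$-a.e.\ pointwise inequality \eqref{eq:lip and lh} and an injectivity-based covering count; crucially, because $\lip_f$ enters only as a $\mu$-measurable function integrated against $\mu$, the $L^\infty$ hypothesis on $H_f^{a,M}$ only needs to hold $\mu$-a.e., which is precisely what the assumption provides. Without this detour, the energy estimate \eqref{eq:C3 energy estimate Q} does not follow.
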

\begin{proof}
	\textbf{Part 1: Case $1\le p<Q$.}\\
	Fix $0<\eps\le 1$.
	By the Vitali-Carath\'eodory theorem (Theorem \ref{thm:VitaliCar}),
	we can assume that $h$ is lower semicontinuous.
	We have $D=\bigcup_{j=1}^{\infty}D_j$, where $D_j$ consists of those points
	$y\in D\cap \Om((M+1)/j)$ for which
	\begin{equation}\label{eq:Dj def new}
	\sup_{0<s\le 1/j}\frac{L_f(y,s)}{l_f(y,s)}
	\left(\vint{B(y,Ms)}a\,d\mu\right)^{-(Q-1)/Q}
	\le H_f^{a,M}(y)+\eps
	\end{equation}
	and
	\begin{equation}\label{eq:Dj def new 2}
		(H_{f}^{a,M}(y))^{Qp/(Q-p)}
		\le h(z)+\eps\quad\textrm{for all }z\in B(y,M/j).
	\end{equation}
	As before, we find a collection $\{B_{j,k}=B(y_{j,k},1/j)\}_{k}$
	of balls covering $D_j$, with $y_{j,k}\in D_j$,
	and such that the balls $(M+1)B_{j,k}$ can be divided into at most $C_{M}$
	collections of pairwise disjoint balls.

	Denote $L_{j,k}:=L_f(y_{j,k},1/j)$ and $l_{j,k}:=l_f(y_{j,k},1/j)$. For each $j\in\N$, define
\[
g_j:=2j\sum_{k} L_{j,k}\ch_{2B_{j,k}}.
\]
Just as in the estimates leading to \eqref{eq:ug estimate}, for $p$-a.e. curve $\gamma$ in $\Om$ we get
\begin{equation}\label{eq:ug estimate in qc case}
	d_Y(f(\gamma(0)),f(\gamma(\ell_{\gamma})))
	\le \liminf_{j\to\infty}\int_\gamma g_j\,ds.
\end{equation}
Then we estimate
	\begin{equation}\label{eq:A2 estimate p}
		\begin{split}
			&j^p\sum_{k}L_{j,k}^p \mu(2B_{j,k})\\
			&\ \ \le j^p\sum_{k} l_{j,k}^p\mu(2B_{j,k})
			\left(\vint{MB_{j,k}}a\,d\mu\right)^{p(Q-1)/Q}
			(H_f^{a,M}(y_{j,k})+\eps)^p\quad\textrm{by }\eqref{eq:Dj def new}\\
			&\ \ \le j^p\sum_{k}l_{j,k}^p \mu(2B_{j,k})
			\left(\vint{M B_{j,k}}a^{p(Q-1)/(Q-p)}\,d\mu\right)^{(Q-p)/Q} (H_f^{a,M}(y_{j,k})
			+\eps)^p \quad\textrm{by H\"older}\\
			&\ \ \le C_d C_{\Om}^{p/Q}
			\sum_{k}l_{j,k}^p \left(\int_{M B_{j,k}}a^{p(Q-1)/(Q-p)}\,d\mu\right)^{(Q-p)/Q} (H_f^{a,M}(y_{j,k})+\eps)^p
		\end{split}
	\end{equation}
	by the doubling property of $\mu$ and \eqref{eq:measure lower and upper bound 3}.
	Using this and Young's inequality, we get
	\begin{equation}\label{eq:qc energy estimate p}
		\begin{split}
			&j^p\sum_{k}L_{j,k}^p \mu(2B_{j,k})\\
			& \le (C_d C_{\Om}^{p/Q})^{Q/p} \sum_{k}l_{j,k}^Q
			 +\sum_{k} \left(\int_{MB_{j,k}}a^{p(Q-1)/(Q-p)}\,d\mu\right) (H_f^{a,M}(y_{j,k})+\eps)^{Qp/(Q-p)}\\
			& \le C_d^{Q/p} C_{\Om} \sum_{k}l_{j,k}^Q\\
			&\qquad +2^{Qp/(Q-p)}\sum_{k} \Bigg(\int_{MB_{j,k}}a^{p(Q-1)/(Q-p)}\,d\mu\Bigg) \Big(\big(H_f^{a,M}(y_{j,k})\big)^{Qp/(Q-p)}+\eps^{Qp/(Q-p)}\Big)\\
			& \le  C_d^{Q/p} C_{\Om}^2 \sum_{k}\nu(B(f(y_{j,k}),l_{j,k}))
			\quad\textrm{by }\eqref{eq:measure lower and upper bound 3}\\
			&\qquad +2^{Qp/(Q-p)}\sum_{k} \int_{MB_{j,k}}a^{p(Q-1)/(Q-p)}\big(h+
			\eps+\eps^{Qp/(Q-p)}\big)\,d\mu\quad\textrm{by }\eqref{eq:Dj def new 2}\\
			& \le  C_d^{Q/p} C_{\Om}^2C_M\nu(V)+ 2^{Qp/(Q-p)}C_M\int_{\Om}a^{p(Q-1)/(Q-p)}(h+2\eps)\,d\mu
		\end{split}
	\end{equation}
	for all $j\in\N$ large enough that $1/j< \beta$,
	 by \eqref{eq:finite nu measure estimate}; note that the exact same estimate applies here.
	We get
	\begin{equation}\label{eq:g star energy estimate p}
		\begin{split}
			\int_{\Om}g_j^p\,d\mu
			&= \int_{\Om}\left(2j\sum_{k} L_{j,k}\ch_{2B_{j,k}}\right)^p\,d\mu\\
			&\le (2j)^pC_M^p \sum_{k}  L_{j,k}^p \mu(2B_{j,k})\\
			&\le C_3 \nu(V)+  C_3 \int_{\Om}a^{p(Q-1)/(Q-p)}(h+2\eps)\,d\mu
			\quad\textrm{by }\eqref{eq:qc energy estimate p}\\
			& <\infty\quad\textrm{by }\eqref{eq:p star assumption}
		\end{split}
	\end{equation}
	with $C_3:=2^{p+Qp/(Q-p)} C_d^{Q/p+(p+1)\lceil \log_2 (18M)\rceil } C_{\Om}^2$.
	
	\textbf{Part 1(a): Case $p=1$.}\\
	Recall that we are considering the sequence
	\[
	g_j=2j\sum_{k} L_{j,k}\ch_{2B_{j,k}}.
	\]
	By \eqref{eq:g star energy estimate p}, this is a bounded sequence in $L^1(\Om)$.
	We will show that it is equi-integrable.
	The first condition of Definition \ref{def:equiintegrability}
	holds automatically since $\Om$ as a bounded set has finite $\mu$-measure.
	We check the second condition.
	
	Suppose by contradiction that
	we find $0<\alpha<1$, an infinite set $J\subset \N$, and
	a sequence of $\mu$-measurable sets $H_j\subset \Om$ such that $\mu(H_j)\to 0$ and
	\begin{equation}\label{eq:equiintegrability contradiction 2}
		j\int_{H_j} \sum_{k} L_{j,k}\ch_{2B_{j,k}}\,d\mu
		\ge \alpha\quad\textrm{for all }j\in J.
	\end{equation}
	Choose $L$ to be the following (very large) number:
	\begin{equation}\label{eq:M choice 3}
		L:=\frac{4C_d^{\lceil \log_2 M\rceil}}{\alpha}\Bigg[C_d^{Q}C_{\Om}^2C_M\nu(V)
		+2^{Q/(Q-1)}C_M\int_{\Om}a(h+2)\,d\mu\Bigg].
	\end{equation}
	For every $j\in \N$, define two sets of indices $I_1^{j}$ and $I_2^{j}$ as follows:
	for $k\in I_1^{j}$, we have
	\[
	\frac{\mu((M+1)B_{j,k}\cap H_j)}{\mu((M+1)B_{j,k})}
	\le \frac{1}{L},
	\]
	and $I_2^{j}$ consists of the remaining indices.
	Now
	\begin{equation}\label{eq:I1 estimate}
		\begin{split}
			&j\sum_{k\in I^j_1}L_{j,k}\mu(2B_{j,k}\cap H_j)\\
			&\qquad \le j\sum_{k\in I^j_1}L_{j,k}\mu((M+1)B_{j,k}\cap H_j)\\
			&\qquad \le \frac{j}{L}\sum_{k\in I^j_1}L_{j,k}\mu((M+1)B_{j,k})\\
			&\qquad \le \frac{j}{L}C_d^{\lceil \log_2 M\rceil}\sum_{k\in I^j_1}L_{j,k}\mu(2B_{j,k})\\
			&\qquad \le \frac{C_d^{\lceil \log_2 M\rceil}}{L} \Bigg[C_d^{Q}C_{\Om}^2C_M\nu(V)
			+2^{Q/(Q-1)}C_M\int_{\Om}a(h+2)\,d\mu\Bigg]
			\quad\textrm{by }\eqref{eq:qc energy estimate p}\\
			&\qquad = \frac{\alpha}{4}\quad\textrm{by }\eqref{eq:M choice 3}.
		\end{split}
	\end{equation}
	Next, we estimate
	\begin{equation}\label{eq:I2 measure estimate hat}
		\begin{split}
		\mu\left(\bigcup_{k\in I_2^j}MB_{j,k}\right)
		&\le \sum_{k\in I_2^j}\mu((M+1)B_{j,k})\\
		&\le  L\sum_{k\in I_2^j}\mu((M+1)B_{j,k}\cap H_j)
		\le  L  C_M \mu(H_j)
		\to 0\quad\textrm{as }j\to\infty.
		\end{split}
	\end{equation}
	Just as in \eqref{eq:generalized Young}, for any $0\le \delta<1$ and any $b_1,b_2\ge 0$ we have
	\begin{equation}\label{eq:generalized Young 2}
		b_1 b_2 \le \delta b_1^Q + \delta^{-1/(Q-1)} b_2^{Q/(Q-1)}.
	\end{equation}
	We choose
	\begin{equation}\label{eq:delta definition}
		\delta:=\frac{1}{4}\frac{\alpha}{C_d^Q C_{\Om}^2 C_M \nu(V)+1}.
	\end{equation}
	By \eqref{eq:Dj def new}, for $j\in \N$ large enough that $1/j<\beta$ we get
	\begin{align*}
		&j\sum_{k\in I^j_2}L_{j,k}\mu(2B_{j,k})\\
		&\quad \le j\sum_{k\in I^j_2}l_{j,k}\mu(2B_{j,k})\left(
		\vint{MB_{j,k}}a\,d\mu\right)^{(Q-1)/Q}(H_f^{a,M}(y_{j,k})+1)\\
		&\quad \le C_d C_{\Om}^{1/Q} \sum_{k\in I^j_2}l_{j,k}\left(
		\int_{MB_{j,k}}a\,d\mu\right)^{(Q-1)/Q}(H_f^{a,M}(y_{j,k})+1)
		\quad\textrm{by }\eqref{eq:measure lower and upper bound 3}\\
		&\quad \le C_d^Q C_{\Om}\delta  \sum_{k\in I^j_2}l_{j,k}^Q + \delta^{-1/(Q-1)}\sum_{k\in I^j_2} (H_f^{a,M}(y_{j,k})+1)^{Q/(Q-1)}
		\int_{MB_{j,k}}a\,d\mu\quad\textrm{by }\eqref{eq:generalized Young 2}\\
		&\quad \le C_d^Q C_{\Om}^2 \delta  \sum_{k\in I^j_2}\nu(B(f(y_{j,k}),l_{j,k}))
		\quad\textrm{by }\eqref{eq:measure lower and upper bound 3}\\ 
		&\qquad\qquad + 2^{Q/(Q-1)}\delta^{-1/(Q-1)}\sum_{k\in I^j_2} \big(H_f^{a,M}(y_{j,k})^{Q/(Q-1)}+1\big)\int_{MB_{j,k}}a\,d\mu\\
		&\quad \le C_d^Q C_{\Om}^2 C_M \delta \nu(V)
		\quad\textrm{by }\eqref{eq:finite nu measure estimate}\textrm{ (the same estimate applies)}\\
		 &\qquad\qquad +  2^{Q/(Q-1)}\delta^{-1/(Q-1)}\sum_{k\in I^j_2} \int_{MB_{j,k}}(h+2)a\,d\mu\quad\textrm{by }\eqref{eq:Dj def new 2}\\
		&\quad \le \frac{\alpha}{4}
		+ 2^{Q/(Q-1)} C_{M}\delta^{-1/(Q-1)} \int_{\bigcup_{k\in I_2^j}MB_{j,k}}(h+2)a\,d\mu\quad\textrm{by }\eqref{eq:delta definition}.
	\end{align*}
	Here the latter term goes to zero as $j\to\infty$ by \eqref{eq:I2 measure estimate hat}.
	Combining this with \eqref{eq:I1 estimate}, we get
	\[
	\limsup_{j\to\infty}j
	\sum_{k}L_{j,k}\mu(2B_{j,k}\cap H_j)
	\le \frac{\alpha}{2}.
	\]
	This contradicts \eqref{eq:equiintegrability contradiction 2} and proves the equi-integrability of
	$\{g_j\}_{j=1}^{\infty}$.
	
	Now by the Dunford--Pettis theorem (Theorem \ref{thm:dunford-pettis}), we find $g\in L^1(\Om)$
	such that by passing to a subsequence (not relabeled), we have
	$g_j\to g$ weakly in $L^1(\Om)$.
	By Mazur's lemma (Theorem \ref{thm:Mazur lemma}), for suitable convex combinations we get the
	strong convergence $\sum_{l=j}^{N_j}a_{j,l} g_l\to g$ in $L^1(\Om)$.
	From \eqref{eq:ug estimate in qc case} and Fuglede's lemma (Lemma \ref{lem:Fuglede lemma}) we get
	\[
				d_Y(f(\gamma(0)),f(\gamma(\ell_{\gamma})))
			\le \liminf_{j\to\infty}\int_{\gamma}\sum_{l=j}^{N_j}a_{j,l}  g_l\,ds
			=\int_{\gamma} g \,ds
	\]
	for $1$-a.e. curve $\gamma$ in $\Om$.
	Thus $g\in L^1(\Om)$ is a $1$-weak upper gradient of $f$  in $\Om$, and so $f\in D^1(\Om;Y)$.
	
	\textbf{Part 1(b): Case $1<p<Q$.}\\
	By \eqref{eq:g star energy estimate p}, 
	$\{g_j\}_{j=1}^{\infty}$ is a bounded sequence in $L^p(\Om)$.
	By reflexivity of the space $L^p(\Om)$, we find a subsequence of $\{g_j\}_{j=1}^{\infty}$ (not relabeled)
	and $g\in L^p(\Om)$ such that $g_j\to g$ weakly in $L^p(\Om)$.
	By Mazur's lemma (Theorem \ref{thm:Mazur lemma}), for suitable convex combinations we get the
	strong convergence $\sum_{l=j}^{N_j}a_{j,l} g_l\to g$ in $L^p(\Om)$.
	By \eqref{eq:ug estimate in qc case} and Fuglede's lemma (Lemma \ref{lem:Fuglede lemma}),
	we obtain
	\begin{equation}
		\begin{split}
			d_Y(f(\gamma(0)),f(\gamma(\ell_{\gamma})))
			\le \liminf_{j\to\infty}\int_{\gamma}\sum_{l=j}^{N_j}a_{j,l} g_l\,ds
			= \int_{\gamma}g\,ds
		\end{split}
	\end{equation}
	for $p$-a.e. curve $\gamma$ in $\Om$.
	Hence $f\in D^p(\Om;Y)$.
	For all $1\le p<Q$, 
	by \eqref{eq:g star energy estimate p} we get
	\[
	\Vert f\Vert_{D^p(\Om;Y)}^p
	\le \Vert g\Vert_{L^p(\Om)}^p
	\le \limsup_{j\to\infty}\Vert g_j\Vert_{L^p(\Om)}^p
	\le C_3\nu(V)+C_3\Vert (h+2\eps) a^{p(Q-1)/(Q-p)}\Vert_{L^{1}(\Om)}.
	\]
	Letting $\eps\to 0$, we get \eqref{eq:C3 energy estimate}.
	
	\textbf{Part 2: Case $p=Q$.}\\
Now $\Om$ is the disjoint union of $N$ and $D$, where $\mu(N)=\Mod_Q(\Gamma_N)=0$, and
$H_f^{a,M}<\infty$ in $D$ and
$\Vert H_f^{a,M}\Vert_{L^{\infty}(\Om)}<\infty$, where $a\in L^{\infty}(\Om)$.
For every $y\in \Om$, we have
\begin{align*}
H_f(y)
&=\limsup_{r\to 0}\frac{L_f(y,r)}{l_f(y,r)}\\
&\le \Vert a\Vert_{L^{\infty}(\Om)}^{(Q-1)/Q}
\limsup_{r\to 0}\frac{L_f(y,r)}{l_f(y,r)}\left(\vint{B(y,Mr)}a\,d\mu\right)^{-(Q-1)/Q}\\
&= \Vert a\Vert_{L^{\infty}(\Om)}^{(Q-1)/Q} H_f^{a,M}(y).
\end{align*}
In particular,
\begin{equation}\label{eq:L infinity comparison}
\Vert H_f \Vert_{L^{\infty}(\Om)}
\le \Vert a\Vert_{L^{\infty}(\Om)}^{(Q-1)/Q} \Vert H_f^{a,M}\Vert_{L^{\infty}(\Om)}<\infty.
\end{equation}
Thus in fact $H_f<\infty$ in $D$, and $H_f\in L^{\infty}(\Om)$.
(Hence from now on, we can work with the ordinary distortion number $H_f$.)
We have $D=\bigcup_{j=1}^{\infty}D^j$, where $D^j$ consists of those points
$y\in D\cap \Om(2/j)$ for which
\begin{equation}\label{eq:finiteness j condition}
\sup_{0<s\le 1/j}\frac{L_f(y,s)}{l_f(y,s)} \le j.
\end{equation}
Fix $j\in\N$.
For each $k\ge j$,
we find a collection $\{B_{k,l}=B(y_{k,l},1/k)\}_{l}$ covering $D^j$, with
$y_{k,l}\in D^j$,
and such that the balls $2B_{k,l}$ can be divided into at most $C_{M}$
collections of pairwise disjoint balls.
Define
\[
g_k:=2k\sum_{l}L_f(y_{k,l},1/k)\ch_{2B_{k,l}},\quad k\in\N.
\]
Consider a curve $\gamma$ in $\Om$ with $\ell_{\gamma}\ge 1/k$.
If $\gamma$ intersects $B_{k,l}$, then $\mathcal H^1(\gamma\cap 2B_{k,l})\ge 1/k$.
Thus we have
\begin{equation}\label{eq:upper gradient property proved}
	\int_{\gamma}g_k\,ds
	\ge 2\sum_{l\colon \gamma\cap B_{k,l}\neq\emptyset}L_f(y_{k,l},1/k)
	\ge  \sum_{l\colon \gamma\cap B_{k,l}\neq\emptyset}\diam f(B_{k,l})
	\ge \mathcal H_{\infty}^{1}(f(\gamma\cap D^j)),
\end{equation}
where the last inequality holds since the balls $B_{k,l}$ satisfying
$\gamma\cap B_{k,l}\neq\emptyset$
cover $\gamma\cap D^j$ and so the sets $f(B_{k,l})$ with
$\gamma\cap B_{k,l}\neq\emptyset$ cover $f(\gamma\cap D^j)$.

Denote $L_{k,l}:=L_f(y_{k,l},1/k)$ and $l_{k,l}:=l_f(y_{k,l},1/k)$.
For every $k\in\N$ such that $1/k<\beta$, we get
\begin{align*}
	\int_\Om g_k^Q\,d\mu
	&=  \int_\Om \left(2k\sum_{l}L_{k,l}\ch_{2B_{k,l}}\right)^Q\,d\mu\\
	&\le C_M^Q (2k)^Q \sum_{l}L_{k,l}^Q\mu(2 B_{k,l})\\
	&\le  2^{2Q} C_M^Q C_{\Om}\sum_{l}L_{k,l}^{Q}\quad\textrm{by }\eqref{eq:measure lower and upper bound 3}\\
	&\le  2^{2Q} C_M^Q  C_{\Om} j^Q \sum_{l}l_{k,l}^{Q}\quad\textrm{by }\eqref{eq:finiteness j condition}\\
	&\le  2^{2Q} C_M^Q   C_{\Om}^2 j^Q\sum_{l}\nu(B(f(y_{j,l}),l_{k,l}))
	\quad\textrm{by }\eqref{eq:measure lower and upper bound 3}\\
	&\le  2^{2Q} C_M^{Q+1}  C_{\Om}^2 j^Q\nu(V)
	\quad\textrm{by }\eqref{eq:finite nu measure estimate}\textrm{ (the same estimate applies)}\\
	&<\infty\quad\textrm{by }\eqref{eq:finite image assumption 3};
\end{align*}
recall that we are keeping $j$ fixed.
By reflexivity of the space $L^Q(\Om)$, we find a subsequence (not relabeled) and
$g\in L^Q(\Om)$
such that $g_k\to g$ weakly in $L^Q(\Om)$.
By Mazur's and Fuglede's lemmas
(Theorem \ref{thm:Mazur lemma} and Lemma \ref{lem:Fuglede lemma}),
we find convex combinations $\sum_{l=k}^{N_k}a_{k,l}g_l$ converging to $g$ in $L^Q(\Om)$,
and such that for
$Q$-a.e. curve $\gamma'$ in $\Om$ we have
\begin{equation}\label{eq:gamma prime ineq}
	\begin{split}
		\int_{\gamma'}g\,ds
		=\liminf_{k\to\infty}\int_{\gamma'}\sum_{l=k}^{N_k}a_{k,l}g_l\,ds
		&\ge \mathcal H_{\infty}^{1}(f(\gamma'\cap D^j))
		\quad \textrm{by }\eqref{eq:upper gradient property proved}.
	\end{split}
\end{equation}
By the properties of modulus,
see e.g. \cite[Lemma 1.34]{BB},
for $Q$-a.e. curve $\gamma$ in $\Om$ we have that \eqref{eq:gamma prime ineq} holds
for every subcurve $\gamma'$ of $\gamma$.
For $Q$-a.e. curve $\gamma$ in $\Om$, we also have that
$\int_{\gamma}g\,ds<\infty$ (see e.g. \cite[Proposition 1.37]{BB}).
Fix a curve $\gamma$ satisfying these two conditions.
We can write any open $U\subset (0,\ell_{\gamma})$ as an at most countable union of pairwise disjoint intervals
$U=\bigcup_{l}(a_l,b_l)$, and then
\begin{equation}\label{eq:gamma A ineq}
	\begin{split}
		\int_{U}g(\gamma(s))\,ds
		=\sum_{l}\int_{(a_l,b_l)}g(\gamma(s))\,ds
		&\ge \sum_{l}\mathcal H_{\infty}^{1}(f(\gamma((a_l,b_l))\cap D^j))\quad\textrm{by }\eqref{eq:gamma prime ineq}\\
		&\ge \mathcal H_{\infty}^{1}(f(\gamma(U)\cap D^j))
	\end{split}
\end{equation}
by the subadditivity of $\mathcal H_{\infty}^1$.
Then for any Borel set $S\subset (0,\ell_{\gamma})$, we can take a small $\delta>0$
and find an open set $U$ such that
$S\subset U\subset (0,\ell_{\gamma})$ and
\begin{align*}
	\int_{S}g(\gamma(s))\,ds
	\ge \int_{U}g(\gamma(s))\,ds-\delta
	&\ge \mathcal H_{\infty}^{1}(f(\gamma(U)\cap D^j))-\delta\quad\textrm{by }\eqref{eq:gamma A ineq}\\
	&\ge \mathcal H_{\infty}^{1}(f(\gamma(S)\cap D^j))-\delta.
\end{align*}
Letting $\delta\to 0$, we get
\[
\int_{S}g(\gamma(s))\,ds
\ge \mathcal H_{\infty}^{1}(f(\gamma(S)\cap D^j)).
\]
Note that $\mathcal H^1_{\infty}$ and $\mathcal H^1$ have the same null sets (see e.g. \cite[Lemma 2.1]{EvGa}).
Thus, using also the Borel regularity of $\mathcal L^1$, we conclude that whenever $S\subset [0,\ell_{\gamma}]$
with $\mathcal L^1(S)=0$, we have
$\mathcal H^{1}(f(\gamma(S)\cap D^j))=0$.
Recall that $\Om=\bigcup_{j=1}^{\infty}D^j\cup N$,
and that $Q$-a.e. curve avoids $N$.
Thus for $Q$-a.e. curve $\gamma$ in $\Om$,
we have that if $S\subset [0,\ell_{\gamma}]$ with $\mathcal L^1(S)=0$, then
\[
	\mathcal H^{1}(f(\gamma(S)))
	 \le \sum_{j=1}^{\infty}\mathcal H^{1}(f(\gamma(S)\cap D^j))
	=0.
\]
By Lemma \ref{lem:discontinuity set}, there is an at most countable set $H\subset D$ such that
$f$ is continuous at every point $y\in D\setminus H$.
Note that $Q$-a.e. curve avoids $H$, see \cite[Corollary 5.3.11]{HKSTbook}.
In total, for $Q$-a.e. curve $\gamma$ in $\Om$, we know that
\begin{equation}\label{eq:cont and Lusin}
f\circ\gamma\ \textrm{ is continuous}\quad \textrm{and}
\quad \textrm{if }S\subset [0,\ell_{\gamma}] \textrm{ with }\mathcal L^1(S)=0,\textrm{ then }
\mathcal H^{1}(f(\gamma(S)))
=0.
\end{equation}
Fix a new $\delta>0$.
Recall that we define
\[
H_f(y)=\limsup_{s\to 0} H_f(y,s)=\limsup_{s\to 0} \frac{L_f(y,s)}{l_f(y,s)}.
\]
Note that $H_f(y)\ge 1$ for every $y\in\Om$, by the fact that the space $X$ is connected.
Thus for every $y\in \Om$, for all sufficiently small $s>0$ we have
	\[
		\frac{L_f(y,s)}{l_f(y,s)}=H_f(y,s)\le (1+\delta)H_f(y).
	\]
It follows that
\begin{equation}\label{eq:lip and lh}
\begin{split}
\lip_f(y)^Q
&= \liminf_{s\to 0}\left(\frac{L_f(y,s)}{s}\right)^Q\\
&= (1+\delta)^{Q}H_f(y)^Q\liminf_{s\to 0}\left(\frac{l_f(y,s)}{s}\right)^Q\\
&\le C_{\Om}^{2}(1+\delta)^{Q}H_f(y)^Q \liminf_{s\to 0} \frac{\nu(B(f(y),l_f(y,s)))}{\mu(B(y,s))}
\quad\textrm{by }\eqref{eq:measure lower and upper bound 3}\\
&\le  C_{\Om}^{2}(1+\delta)^{Q}\Vert H_f\Vert_{L^{\infty}(\Om)}^Q
\liminf_{s\to 0} \frac{\nu(B(f(y),l_f(y,s)))}{\mu(B(y,s))}
\end{split}
\end{equation}
for $\mu$-a.e. $y\in\Om$.
For each $j\in\N$, consider any $\mu$-measurable function $h_j$ on $\Om$ that is at least
\[
 \Om\ni y\mapsto \frac{\nu(B(f(y),l_f(y,1/j)))}{\mu(B(y,1/j))}.
\]
By Lemma \ref{lem:Borel for lip f}, $\lip_f$ is $\mu$-measurable on $\Om$.
We get
\begin{equation}\label{eq:lip f Q}
\begin{split}
\int_{\Om}\lip_f^Q\,d\mu
&\le C_{\Om}^{2}(1+\delta)^{Q}\Vert H_f\Vert_{L^{\infty}(\Om)}^Q\int_{\Om}\liminf_{j\to \infty} h_j(y)\,d\mu(y)
\quad\textrm{by }\eqref{eq:lip and lh}\\
&\le C_{\Om}^{2}(1+\delta)^{Q}\Vert H_f\Vert_{L^{\infty}(\Om)}^Q \liminf_{j\to\infty} \int_{\Om}h_j(y)\,d\mu(y)
\quad\textrm{by Fatou}.
\end{split}
\end{equation}
Consider $j\in\N$ with $1/j< \beta$ (recall \eqref{eq:finite image assumption 3}), and
take balls $\{B(y_l,1/j)\}_{l}$, with $y_l\in \Om$,
which cover $\Om$, and such that the balls $B(y_l,2/j)$ can be divided into at most
$L\le C_d^4$ collections of pairwise disjoint balls
$\{B(y_l,2/j)\}_{l\in I_1},\ldots,\{B(y_l,2/j)\}_{l\in I_L}$.
Note that
\begin{align*}
\frac{\nu(B(f(y),l_f(y,1/j)))}{\mu(B(y,1/j))}
&\le  \sum_{l}\ch_{B(y_l,1/j)}(y)
\frac{\nu
	\big(\bigcup_{z\in B(y_l,1/j)\cap\Om} B(f(z),l_f(z,1/j))\big)}{\mu(B(y,1/j))}\\
&\le C_d^2 \sum_{l}\ch_{B(y_l,1/j)}(y)
\frac{\nu
	\big(\bigcup_{z\in B(y_l,1/j)\cap\Om} B(f(z),l_f(z,1/j))\big)}{\mu(B(y_l,1/j))},
\end{align*}
which is clearly $\mu$-measurable, and so $h_j$ can be taken to be the right-hand side. Now, if $z\in B(y_l,1/j)\cap\Om$,
and $z'\in B(y_{l'},1/j)\cap\Om$, where $B(y_l,2/j)\cap B(y_{l'},2/j)=\emptyset$, then
from the definition of $l_f(\cdot,\cdot)$ and from the injectivity of $f$ it follows 
(just as in \eqref{eq:injectivity for balls}) that
\begin{equation}\label{eq:injectivity for balls 2}
	B(f(z),l_f(z,1/j))\cap B(f(z'),l_f(z',1/j)) = \emptyset.
\end{equation}
We get
\begin{align*}
 \int_{ \Om} h_j(y)\,d\mu(y)
	& =  C_d^2 \int_{\Om} \sum_{l}\ch_{B(y_l,1/j)}(y)
	\frac{\nu
	\left(\bigcup_{z\in B(y_l,1/j)\cap\Om} B(f(z),l_f(z,1/j))\right)}{\mu(B(y_l,1/j))}\,d\mu(y)\\
	&\le C_d^2 \sum_{l} \nu\left(\bigcup_{z\in B(y_l,1/j)\cap\Om} B(f(z),l_f(z,1/j))\right)\\
	&= C_d^2 \sum_{m=1}^L\sum_{l\in I_m} \nu\left(\bigcup_{z\in B(y_l,1/j)\cap\Om} B(f(z),l_f(z,1/j))\right)\\
	&\le C_d^2 \sum_{m=1}^L \nu\left(\bigcup_{z\in \Om} B(f(z),l_f(z,1/j))\right)
	\quad\textrm{by }\eqref{eq:injectivity for balls 2}\\
	&\le C_d^6 \nu\left(\bigcup_{y\in \Om} B(f(y),l_f(y,1/j))\right)\\
	&= C_d^6\nu(V)<\infty
	\quad\textrm{by }\eqref{eq:finite image assumption 3}.
\end{align*}
Combining this with \eqref{eq:lip f Q}, we get
\begin{align*}
\Vert \lip_f\Vert_{L^Q(\Om)}^Q
&\le C_{\Om}^2 (1+\delta)^Q C_d^6 \Vert H_f\Vert_{L^{\infty}(\Om)}^Q \nu(V)\\
&\le C_{\Om}^2 (1+\delta)^Q C_d^6 \nu(V)\Vert a\Vert_{L^{\infty}(\Om)}^{Q-1}
\Vert H_f^{a,M}\Vert_{L^{\infty}(\Om)}^Q
\quad\textrm{by }\eqref{eq:L infinity comparison}.
\end{align*}
Letting $\delta\to 0$, we conclude
\begin{equation}\label{eq:lipf infinity}
	\Vert \lip_f\Vert_{L^Q(\Om)}^Q\le C_{\Om}^2 C_d^6 \nu(V)\Vert a\Vert_{L^{\infty}(\Om)}^{Q-1}
	\Vert H_f^{a,M}\Vert_{L^{\infty}(\Om)}^Q.
\end{equation}	
Thus $\lip_f<\infty$ $\mu$-a.e. in $\Om$.
Using the function $\infty\ch_F$ where $F\supset \{\lip_f=\infty\}$
is a $\mu$-negligible Borel set, we see that
for $Q$-a.e. curve $\gamma$ in $\Om$ we have,
with $T:=[0,\ell_{\gamma}]\cap \gamma^{-1}(\{\lip_f<\infty\})$,
that $\mathcal L^1([0,\ell_{\gamma}]\setminus T)=0$.
Moreover, $f\circ\gamma$ is continuous by \eqref{eq:cont and Lusin}, and thus
\begin{align*}
d_Y(f(\gamma(0)),f(\gamma(\ell_{\gamma})))
&\le \mathcal H^1(f(\gamma([0,\ell_{\gamma}])))\\
&\le \mathcal H^1(f(\gamma(T)))\quad\textrm{by }\eqref{eq:cont and Lusin}\textrm{ (second part)}\\
&\le \int_{T}\lip_f(\gamma(t))\,dt\quad\textrm{by Lemma \ref{lem:abs cont}}\\
&\le \int_{\gamma}\lip_f\,dt.
\end{align*}
Hence $\lip_f\in L^Q(\Om)$ is a $Q$-weak upper gradient of $f$ in $\Om$.
In conclusion, $f \in D^Q(\Om;Y)$. Moreover, by \eqref{eq:lipf infinity} we have
\[
\Vert f\Vert_{D^Q(\Om;Y)}^Q \le \Vert \lip_f\Vert_{L^Q(\Om)}^Q
\le C_d^6 C_{\Om}^2 \nu(V)\Vert a\Vert_{L^{\infty}(\Om)}^{Q-1}
\Vert H_f^{a,M}\Vert_{L^{\infty}(\Om)}^Q,
\]
proving \eqref{eq:C3 energy estimate Q} with $C_4=C_d^6 C_{\Om}^2$.
\end{proof}

In Part 2 of the proof, the idea of first proving absolute continuity on curves,
and then examining $\lip_f$, comes from Williams \cite{Wi}.
This part of the proof was moreover quite similar to \cite[Section 5]{LaZh2}, but
as usual, extra difficulties arose from the fact that $f$ is not assumed to be a homeomorphism,
merely injective.

\section{Corollaries and examples}\label{sec:cor}

Since the formulations of Theorems \ref{thm:main theorem}, \ref{thm:main theorem 2}, and
\ref{thm:main theorem 3} are rather long and technical, we will consider three
corollaries, as well as a few examples.

\begin{corollary}\label{cor:main lip}
	Suppose $\Om\subset X$ is open and bounded,
	$f\colon \Om\to Y$ is bounded, $M\ge 1$,
	$\kappa\ge \mu$ is a finite Radon measure on $\Om$,
	$\Lip_f^{\kappa,M}(x)\le 1$ for $\widetilde{\mathcal{H}}^1$-a.e. $x\in\Om$,
	and there is $0<C_{\Om}<\infty$ such that
	\[
	\frac{\mu(B(x,r))}{r^Q}\le C_{\Om}\ \textrm{ for all }x\in \Om\ \textrm{and}\ r>0.
	\]
	Then  $f\in \BV(\Om;Y)$.
\end{corollary}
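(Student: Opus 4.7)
The plan is to derive the corollary directly from Theorem \ref{thm:main theorem} by taking $D$ empty and absorbing the entire failure locus of $\Lip_f^{\kappa,M}\le 1$ into the exceptional set $N$, and then promoting $D^{\BV}$ to $\BV$ using boundedness.

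First I would set
\[
N := \{x\in \Om : \Lip_f^{\kappa,M}(x) > 1\}, \quad A := \Om\setminus N, \quad D:=\emptyset, \quad h\equiv 1.
\]
By hypothesis $\widetilde{\mathcal H}^1(N)=0$, and so Lemma \ref{lem:curve intersects set} (applied with $p=1$) gives $\Mod_1(\Gamma_N)=0$. On $A$ we have $\Lip_f^{\kappa,M}\le 1=h$, and since $D=\emptyset$ the bound \eqref{eq:A1A2 assumption} reduces to $\int_{\Om}h\,d\kappa=\kappa(\Om)<\infty$, which holds because $\kappa$ is finite on $\Om$.

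Next I would verify the remaining structural hypotheses of Theorem \ref{thm:main theorem}. The support of $\kappa$ contains $\Om$ since $\kappa\ge\mu$ and the doubling property forces every ball in $\Om$ to have positive $\mu$-measure. The upper bound on $\mu(B(x,r))/r^Q$ is assumed. Because $D=\emptyset$, the $\nu$-lower bound in \eqref{eq:measure lower and upper bound}, the existence of an open set $W\supset D$ on which $f$ is injective (take $W=\emptyset$), and the finite-image condition \eqref{eq:finite image assumption} (take $V=\emptyset$, so $\nu(V)=0$) are all vacuously satisfied. Theorem \ref{thm:main theorem} therefore applies and gives $f\in D^{\BV}(\Om;Y)$.

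Finally I would upgrade $D^{\BV}$ to $\BV$. Since $\Om$ is bounded and $\mu$ is doubling, $\mu(\Om)<\infty$; since $f$ is bounded, $d_Y(f(\cdot),f(x_0))$ is a bounded function on $\Om$ for any fixed $x_0\in\Om$, hence lies in $L^1(\Om)$. Thus $f\in L^1(\Om;Y)$, and combined with $f\in D^{\BV}(\Om;Y)$ this yields $f\in\BV(\Om;Y)$. There is no real obstacle here: all of the analytic content is already packaged in Theorem \ref{thm:main theorem}, and the corollary is a specialisation in which $D$ is empty, $h$ is a constant, and Lemma \ref{lem:curve intersects set} is invoked precisely to convert the $\widetilde{\mathcal H}^1$-null exceptional set into a $\Mod_1$-null curve family, which is the pivotal step of the reduction.
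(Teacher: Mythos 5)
Your proof is correct and takes essentially the same route as the paper's: both apply Theorem \ref{thm:main theorem} with $N=\{\Lip_f^{\kappa,M}>1\}$, $A=\Om\setminus N$, $D=W=\emptyset$, $h\equiv 1$, invoke Lemma \ref{lem:curve intersects set} to get $\Mod_1(\Gamma_N)=0$, and upgrade to $\BV$ via $f\in L^\infty(\Om;Y)\subset L^1(\Om;Y)$. Your write-up is just more explicit about the vacuous hypotheses and the support condition on $\kappa$, which the paper leaves implicit.
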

\begin{proof}
This is obtained by applying Theorem \ref{thm:main theorem} with
the choices: $N$ is the subset of $\Om$ where $\Lip_f^{\kappa,M}>1$,
$A=\Om\setminus N$, $D=W=\emptyset$, $h=1$, and $\eps=1$.
Note that $\Mod_1(\Gamma_N)=0$ by Lemma \ref{lem:curve intersects set}, and that
$f\in L^{\infty}(\Om;Y)\subset L^1(\Om;Y)$.
\end{proof}

Many BV mappings, which may be highly discontinuous, can be seen as ``generalized Lipschitz''
in the sense of $\Lip_f^{\kappa,M}$ being bounded. Consider the following example.

\begin{example}\label{ex:lipschitz}
	Let the spaces $(X,d,\mu)$ and $(Y,d_Y,\nu)$ be $\R^2$ and $\R^1$, respectively,
	equipped with the Euclidean metrics and the Lebesgue measures $\mathcal L^2$ and $\mathcal L^1$.
	Let $\Om=(0,1)\times (0,1)$, and for $(x_1,x_2)\in \Om$ let
	\[
	f(x_1,x_2):=\lambda((0,x_2])
	\]
	for a finite Radon measure $\lambda\ge\mathcal L^1$.
	If $\lambda$ is not absolutely continuous with respect to $\mathcal L^1$, then $f$ is
	obviously not Lipschitz, and if $\lambda$ has Dirac masses, $f$ is not even continuous.
	Thus the existing literature
	results written for continuous functions, as described in the Introduction, are not applicable.
	On the other hand, define the Radon measure
	$\kappa:=\mathcal L^1\times \lambda$ on $\Om$. For every $x=(x_1,x_2)\in \Om$, we get
	\begin{equation}\label{eq:Lip 2 estimate}
	\begin{split}
		 \Lip_f^{\kappa,2}(x)
		&=\limsup_{r\to 0}\frac{L_f(x,r)}{r}\frac{\mathcal L^2(B(x,2r))}{\kappa(B(x,2r))}\\
		&\le  \limsup_{r\to 0}\frac{\lambda([x_2-r,x_2+r])}{r}
		\frac{\mathcal L^2(B(x,2r))}{\kappa(B(x,2r))}\\
		&\le \limsup_{r\to 0}\frac{\lambda([x_2-r,x_2+r])}{r}
		\frac{4\pi r^2}{2r\lambda ([x_2-r,x_2+r])}\\
		&\le 2\pi.
	\end{split}
	\end{equation}
	Now Corollary \ref{cor:main lip} gives $f\in \BV(\Om;\R)$.
\end{example}

Note that in \eqref{eq:Lip 2 estimate},
the factor ``$2$'' in $\kappa(B(x,2r))$ is crucial for obtaining
the quantity $\lambda([x_2-r,x_2+r])$ in the denominator on the third line.
This demonstrates that the number $M$ in the definition of $\Lip_f^{\kappa,M}$
provides crucial flexibility.

In the next corollary, the idea is that we choose $a=1$ and then the generalized Lipschitz number reduces to the
ordinary one.

\begin{corollary}\label{cor:literature}
	Suppose $\Om\subset X$ is open and bounded, $1\le p<\infty$,
	$f\colon \Om\to Y$ is bounded,
	$\Lip_f(x)<\infty$ for $\widetilde{\mathcal H}^p$-a.e. $x\in\Om$,
	$\Vert \Lip_f\Vert_{L^p(\Om)}<\infty$,
	and there is $0<C_{\Om}<\infty$ such that
	\[
	\frac{\mu(B(x,r))}{r^Q}\le C_{\Om}\ \textrm{ for all }x\in \Om\ \textrm{and}\ r>0.
	\]
	Then  $f\in N^{1,p}(\Om;Y)$.
\end{corollary}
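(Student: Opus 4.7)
The plan is to apply Theorem \ref{thm:main theorem 2} with the specific choices $a\equiv 1$ and $M=1$. With these choices, $d\kappa=a\,d\mu=d\mu$, so the generalized Lipschitz number reduces to the ordinary one: $\Lip_f^{a,1}=\Lip_f$. The assumptions of the corollary are then essentially the assumptions of Theorem \ref{thm:main theorem 2}, translated so that no auxiliary measure $\kappa$ appears.

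More concretely, I would set
\[
N:=\{x\in\Om\colon \Lip_f(x)=\infty\},\qquad A:=\Om\setminus N,
\]
and take $h:=\Lip_f$. First, since $\Om$ is bounded and $\mu$ is doubling, one has $\mu(\Om)<\infty$, so $a\equiv 1\in L^p(\Om)$; moreover every nonempty open subset of $\Om$ has positive $\mu$-measure (again by doubling), so $a$ satisfies the nondegeneracy requirement of Theorem \ref{thm:main theorem 2}. By construction $\Lip_f^{a,1}=\Lip_f<\infty$ on $A$, and the hypothesis $\widetilde{\mathcal H}^p(N)=0$ combined with Lemma \ref{lem:curve intersects set} yields $\Mod_p(\Gamma_N)=0$. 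The integrability assumption \eqref{eq:hap assumption} becomes
\[
\int_\Om (ha)^p\,d\mu=\int_\Om \Lip_f^p\,d\mu<\infty,
\]
which is exactly the hypothesis $\Vert\Lip_f\Vert_{L^p(\Om)}<\infty$. Finally, the measure upper bound \eqref{eq:measure lower and upper bound 2} is assumed directly. Theorem \ref{thm:main theorem 2} therefore applies and gives $f\in D^p(\Om;Y)$.

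To upgrade from the Dirichlet class to the Newton--Sobolev class, I only need to check that $f\in L^p(\Om;Y)$, i.e., that $d_Y(f(\cdot),f(x_0))\in L^p(\Om)$ for some (equivalently, any) $x_0\in\Om$. Since $f$ is bounded, $d_Y(f(\cdot),f(x_0))$ is bounded by a constant, and since $\mu(\Om)<\infty$, this function lies in $L^p(\Om)$. Combined with the $p$-weak upper gradient in $L^p(\Om)$ supplied by $D^p$-membership, this yields $f\in N^{1,p}(\Om;Y)$.

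There is essentially no substantive obstacle; the corollary is a clean specialization of the main theorem. The only minor point worth double-checking is that the function $h=\Lip_f$ is admissible in the role played by $h$ in Theorem \ref{thm:main theorem 2} (in particular that the integral of $h^p$ is well-defined), but this is guaranteed by the stated hypothesis $\Vert\Lip_f\Vert_{L^p(\Om)}<\infty$, which implicitly includes the requisite measurability. All other verifications are bookkeeping.
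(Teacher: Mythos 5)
Your proposal is correct and matches the paper's own proof essentially verbatim: apply Theorem \ref{thm:main theorem 2} with $a\equiv 1$, $M=1$, $N=\{\Lip_f=\infty\}$, $A=\Om\setminus N$, $h=\Lip_f$, invoke Lemma \ref{lem:curve intersects set} for $\Mod_p(\Gamma_N)=0$, and note $f\in L^\infty(\Om;Y)\subset L^p(\Om;Y)$ to upgrade from $D^p$ to $N^{1,p}$. No differences worth noting.
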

\begin{proof}
	This is obtained by applying Theorem \ref{thm:main theorem 2} with
	the choices: $M=1$, $a=1$, $N$ is the subset of $\Om$ where $\Lip_f=\infty$,
	$A=\Om\setminus N$, $D=W=\emptyset$, and $h=\Lip_f$.
	Note that $\Mod_p(\Gamma_N)=0$ by Lemma \ref{lem:curve intersects set}, and that
	$f\in L^{\infty}(\Om;Y)\subset L^p(\Om;Y)$.
\end{proof}

In this way, we recovered a result very similar to the known results in the literature,
see e.g. \cite[Theorem 1.5]{BC} and recall the results described in the Introduction.
However, the main advantage of the generalized Lipschitz and distortion numbers becomes apparent when choosing suitably
large measures $\kappa$ and functions $a$, which force these numbers to be
bounded. This was the idea in Corollary \ref{cor:main lip}, and also
in the next corollary.

\begin{corollary}\label{cor:main theorem}
	Suppose $\Om\subset X$ is open and bounded, $f\colon \Om\to Y$ is injective and bounded,
	$M\ge 1$, $1\le p\le Q$, 
	there is $a\colon \Om\to [1,\infty)$ in $L^{p^*(Q-1)/Q}(\Om)$ such that
		$H_f^{a,M}\le 1$ for $\widetilde{\mathcal H}^p$-a.e. $x\in\Om$,
		and there is $0<C_{\Om}<\infty$ such that
		\[
		\frac{\mu(B(x,r))}{r^Q}\le C_{\Om}
		\quad\textrm{and}\quad
		\frac{\nu(B(f(x),r))}{r^Q}\ge C_{\Om}^{-1}
		\ \textrm{ for all }x\in \Om.
		\]
		 Then
		$f\in N^{1,p}(\Om;Y)$.
\end{corollary}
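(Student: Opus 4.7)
The plan is to derive this from Theorem \ref{thm:main theorem 3}, proceeding exactly as in the short reductions used for Corollaries \ref{cor:main lip} and \ref{cor:literature}. Since the hypothesis gives $H_f^{a,M}\le 1$ outside an exceptional set of zero codimension $p$ Hausdorff measure, the natural partition is
\[
N := \{x\in\Om\colon H_f^{a,M}(x) > 1\},\qquad D := \Om\setminus N,
\]
with the constant function $h:=\ch_D$ absorbing the distortion-type term in \eqref{eq:p star assumption}. It remains to verify the hypotheses of Theorem \ref{thm:main theorem 3} one by one.

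First I would handle the smallness of $N$. By assumption $\widetilde{\mathcal H}^p(N)=0$, so Lemma \ref{lem:curve intersects set} immediately yields $\Mod_p(\Gamma_N)=0$. Moreover, any cover of $N$ by balls of radius at most $R$ with $\sum_{j,k}\mu(B_{j,k})/r_{j,k}^p$ small also gives $\mu(N)\le R^p\,\widetilde{\mathcal H}^p_R(N)$, whence $\mu(N)=0$; both measurability conditions required by Theorem \ref{thm:main theorem 3} are then in place, and $H_f^{a,M}\le 1$ pointwise on $D$. With $h=\ch_D$, the integrability condition \eqref{eq:p star assumption} holds in the case $1\le p<Q$ because
\[
\Vert h\, a^{p^*(Q-1)/Q}\Vert_{L^1(\Om)}\le \Vert a\Vert_{L^{p^*(Q-1)/Q}(\Om)}^{p^*(Q-1)/Q}<\infty,
\]
while in the case $p=Q$ we have $a\in L^{\infty}(\Om)$ and $\Vert H_f^{a,M}\Vert_{L^{\infty}(\Om)}\le 1$. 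The lower bound $a\ge 1$ guarantees that the integral of $a$ over every nonempty open subset is positive, and the bounds \eqref{eq:measure lower and upper bound 3} are precisely the corollary's hypotheses.

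The main obstacle is verifying the finiteness of $\nu(V)$ for some $\beta>0$, where $V=\bigcup_{y\in D}B(f(y),l_f(y,\beta))$. Since $\Om$ is open, nonempty, and bounded, I would pick any two distinct points $w_1,w_2\in\Om$ and set $\beta:=\tfrac{1}{2}d(w_1,w_2)>0$; for every $y\in\Om$ at least one of $d(y,w_i)\ge\beta$ must hold, so
\[
l_f(y,\beta)\le \max_{i=1,2} d_Y(f(y),f(w_i))\le 2\diam f(\Om)<\infty
\]
by boundedness of $f$. Consequently $V$ is contained in a single ball of $Y$, which has finite $\nu$-measure by the standing assumption that every ball in $Y$ has nonzero and finite measure. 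Theorem \ref{thm:main theorem 3} then produces a $p$-weak upper gradient of $f$ in $L^p(\Om)$, giving $f\in D^p(\Om;Y)$; combined with $f\in L^p(\Om;Y)$, which is immediate from the boundedness of $f$ together with $\mu(\Om)<\infty$, we conclude $f\in N^{1,p}(\Om;Y)$.
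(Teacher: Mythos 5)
Your proof is correct and follows the same route as the paper, namely invoking Theorem \ref{thm:main theorem 3} with $N=\{H_f^{a,M}>1\}$, $D=\Om\setminus N$, and $h=\ch_D$ (the paper takes $h\equiv1$, which is equivalent since $\mu(N)=0$), together with Lemma \ref{lem:curve intersects set} for $\Mod_p(\Gamma_N)=0$ and the boundedness of $f$ for \eqref{eq:finite image assumption 3}. You are in fact a bit more careful than the paper's short proof: you explicitly verify $\mu(N)=0$, which is a hypothesis of Theorem \ref{thm:main theorem 3} that the paper leaves implicit, and your two-point choice $\beta=\tfrac12 d(w_1,w_2)$ cleanly guarantees that $l_f(y,\beta)$ is a finite infimum for every $y\in\Om$.
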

\begin{proof}
	This is obtained by applying Theorem \ref{thm:main theorem 3}
	with the choices: $N$ is the subset of $\Om$ where
	$H_f^{a,M}>1$,
	$D=\Om\setminus N$,  and $h=1$.
	Note that $\Mod_p(\Gamma_N)=0$ by Lemma \ref{lem:curve intersects set}.
	Moreover, as long as $\Om$ consists of at least two points,
	we can choose $x_0\in \Om$ and a number $0<\beta<\diam \Om$, and then
	\begin{align*}
		\nu\left(\bigcup_{y\in D}B(f(y),l_f(y,\beta))\right)
		&\le \nu\left(\bigcup_{y\in \Om}B(f(y),\diam f(\Om))\right)\\
		&\le \nu(B(f(x_0),2\diam f(\Om))),
	\end{align*}
	which is finite by our standing assumption that balls have finite measure.
	Thus \eqref{eq:finite image assumption 3} is also fulfilled.
\end{proof}

Recall that Theorem \ref{thm:main intro} from the Introduction states that:

\begin{itemize}
	\item Suppose $(X,d,\mu)$ and $(Y,d_Y,\nu)$ are Ahlfors $Q$-regular, and $1\le p\le Q$.
	Let $f\colon X\to Y$ be injective and bounded, and $M\ge 1$.
	Then:
	\begin{enumerate}
		\item If there exists a Radon measure $\kappa\ge \mu$ such that
		$\Lip_f^{\kappa,M}(x)\le 1$ for $\widetilde{\mathcal{H}}^1$-a.e. $x\in X$,
		then $f\in \BV_{\loc}(X;Y)$;
		\item If  there exists a function $a\colon X\to [1,\infty)$ belonging to
		$L_{\loc}^{p^*(Q-1)/Q}(X)$ such that
		$H_f^{a,M}(x)\le 1$ for $\widetilde{\mathcal H}^p$-a.e. $x\in X$, then
		$f\in N_{\loc}^{1,p}(X;Y)$.
	\end{enumerate}
\end{itemize}

\begin{proof}[Proof of Theorem \ref{thm:main intro}]
(1):
For every $x\in X$ we find $r>0$ such that
$\kappa(B(x,r))<\infty$, and then from Corollary \ref{cor:main lip} we get $f\in \BV(B(x,r);Y)$.
In conclusion, $f\in \BV_{\loc}(X;Y)$.

(2):
For every $x\in X$ we find $r>0$ such that
$\Vert a\Vert_{L^{p^*(Q-1)/Q}(B(x,r))}<\infty$,
and then from Corollary \ref{cor:main theorem} we get $f\in N^{1,p}(B(x,r);Y)$.
In conclusion, $f\in N^{1,p}_{\loc}(X;Y)$.
\end{proof}

\begin{example}\label{ex:quasiconformal}
	Let $(X,d,\mu)$ and $(Y,d_Y,\nu)$ both be
	$\R^2$ equipped with the Euclidean metric and the Lebesgue measure $\mathcal L^2$.
	Let $\Om=(0,1)\times (0,1)$, and for $(x_1,x_2)\in \Om$ let
	\[
	f((x_1,x_2)):=(f_1(x_1),f_2(x_2)),
	\]
	where
	\[
	f_1(x_1):=x_1\quad\textrm{and}\quad f_2(x_2):=\int_0^{x_2}g(s)\,ds
	\]
	for a function $g\in L^2((0,1))$, $g\ge 1$.
	For every $x=(x_1,x_2)\in \Om$, we have
	\begin{equation}\label{eq:Hf is comparable to limsup}
	\limsup_{r\to 0}\vint{B(x_2,r)}g(s)\,ds
	\le H_f(x)\le 2\limsup_{r\to 0}\vint{B(x_2,r)}g(s)\,ds.
	\end{equation}
	Thus $H_f=\infty $ in a set $(0,1)\times D$, where $D\subset (0,1)$ may clearly be of dimension strictly
	greater than $0$, and then $E:=(0,1)\times D$ is of dimension strictly greater than $1$.
	Thus $E$ is not of $\sigma$-finite $\mathcal H^1$-measure, and the results in the literature,
	as described in the Introduction, are not applicable.
	(Note that $\mathcal H^1$ and $\widetilde{\mathcal H}^1$ are now comparable.)
	On the other hand, let $\widehat{g}(x_1,x_2):=g(x_2)$ and $a:= \widehat{g}^2$,
	so that $a\in L^1(\Om)$. For every $x=(x_1,x_2)\in \Om$, we get
	\begin{align*}
		 H_f^{a,2}(x)
		&= \limsup_{r\to 0}\frac{L_f(x,r)}{l_f(x,r)}
		\left(\vint{B(x,2r)}a\,d\mathcal L^2\right)^{(1-2)/2}\\
		&\le 2\limsup_{r\to 0}\vint{(x_2-r,x_2+r)}g\,ds
		\left(\vint{B(x,2r)}a\,d\mathcal L^2\right)^{-1/2}\\
		&\le 2\limsup_{r\to 0}\vint{(x_2-r,x_2+r)}g\,ds
		\left(\vint{B(x,2r)}\widehat{g}\,d\mathcal L^2\right)^{-1}\quad\textrm{by H\"older}\\
		&\le 2\limsup_{r\to 0}\vint{(x_2-r,x_2+r)}g\,ds
		\left(\frac{1}{4\pi r^2}\int_{(x_2-r,x_2+r)}\int_{(x_1-r,x_1+r)}g(s)\,dt\, ds\right)^{-1}\\
		&= 2\pi.
	\end{align*}
Now Corollary \ref{cor:main theorem} gives $f\in N^{1,1}(\Om;\R^2)$.
\end{example}

In the literature, it is known that $E=\{H_f=\infty\}$ cannot in general be allowed to be larger than $\sigma$-finite with respect
to $\widetilde{\mathcal H}^1$, see e.g. \cite[Remark 1.2(b)]{KoRo} and \cite[Remark 1.9]{Wi}.
However, Example \ref{ex:quasiconformal} demonstrates that we can sometimes get around this limitation
by using the generalized distortion number $H_f^{a,M}$.
A similar example was already given previously
in \cite[Example 5.27]{LahGFQ}, but there we were limited 
to being able to handle functions $g$ with a suitable structure, whereas
above, $1\le g\in L^2((0,1))$ can be arbitrary.
This flexibility is again achieved by means of the factor $M$ in $H_f^{a,M}$;
in this case $M=2$ sufficed.

On the other hand, note that clearly the mapping $f$ in Example \ref{ex:quasiconformal} is actually in $N^{1,2}(\Om;\R^2)$
and not only in $N^{1,1}(\Om;\R^2)$. This raises the question of whether there could be a sharper version of
Corollary \ref{cor:main theorem}.

Finally we return to the question of the connection between the exceptional set $E$ 
and the approximate jump set of a BV function.
Our reasoning will be similar to \cite[Corollary 5.22]{LahGFQ}, but in the theory developed in that paper,
$f$ was always assumed to be continuous, which is of course not natural from the viewpoint of BV theory.

\begin{example}\label{ex:set E}
	Suppose $f\colon \R^n\to \R^n$ is a homeomorphism, with $n\ge 2$, and
	there exists a set $E$ of $\sigma$-finite $\mathcal H^{n-1}$-measure such that
	$H_f(x)<\infty$ for every $x\in\R^n\setminus E$.
	Assume also that $H_f\in L_{\loc}^{n/(n-1)}(\R^n)$.

	Additionally, assume that $E$ can be presented as a
	union $E=\bigcup_{j=0}^{\infty}E_j$, where each $E_j$ is a Borel set with
	$\mathcal H^{n-1}(E_0)=0$, $0<\mathcal H^{n-1}(E_j)<\infty$ for all $j\in\N$, and
	\begin{equation}\label{eq:lower density condition}
	\liminf_{r\to 0}\frac{\mathcal H^{n-1}(B(x,r)\cap E_j)}{r^{n-1}}>0
	\end{equation}
	for $\mathcal H^{n-1}$-a.e. $x\in E_j$, for each $j\in\N$.
	This is a regularity assumption that is satisfied for example if $E$ is an $n-1$-rectifiable set
	(see e.g. \cite[Theorem 2.83]{AFP}).

	Define $\kappa$ by
	\begin{equation}\label{eq:choice of kappa}
	\kappa :=\mathcal L^n +\sum_{j=1}^{\infty} 2^{-j}\mathcal H^{n-1}(E_j)^{-1}\mathcal H^{n-1}\mres_{E_j}.
	\end{equation}
	At points $x\in E$ where \eqref{eq:lower density condition} is satisfied, by the continuity of $f$ we get
	\[
	\Lip_f^{\kappa,1}(x)=
	\limsup_{r\to 0}\frac{L_f(x,r)}{r}\frac{\mathcal L^n(B(x,r))}{\kappa(B(x,r))}=0.
	\]
	Thus we obtain $\Lip_f^{\kappa,1}(x)=0$ for $\mathcal H^{n-1}$-a.e. $x\in E$;
	this gives an exceptional (Borel) $N\subset E$ with $\mathcal H^{n-1}(N)=0$. Meanwhile,
	$H_f^{\kappa,1}(x)\le H_f(x)<\infty$ for
	every $x\in \R^n\setminus E$.
	Now, for a bounded open set $\Om\subset \R^n$, we apply the first part of Theorem \ref{thm:main theorem}
	with the choices $A=\Om\cap E\setminus N$, $D=\Om\setminus E$, and
	\[
	h=\Lip_f^{\kappa,1}\ch_{A}+(H_f^{\kappa,1})^{n/(n-1)}\ch_D;
	\]
	measurability is now straightforward to show, since $f$ is continuous.
	Note also that since $f$ is a homeomorphism, it is locally bounded and then
	\eqref{eq:finite image assumption} is satisfied with every choice
	of $\beta>0$.
	For any $0<\eps\le 1$, we obtain
	\begin{equation}\label{eq:Df estimate}
	\begin{split}
	\Vert Df\Vert(\Om)
	&\le
	C_1\eps^{-1/(n-1)}\left(\int_{A}\Lip_f^{\kappa,1}\,d\kappa+ \int_{D}(H_f^{\kappa,1})^{n/(n-1)}\,d\kappa\right)
	+C_1\eps(\kappa(\Om)+\mathcal L^n(V))\\
	&\le C_1\left(\eps^{-1/(n-1)}\int_{\Om}H_f^{n/(n-1)}\,d\mathcal L^n
	+\eps \kappa(\Om)+\eps\mathcal L^n(V)\right).
	\end{split}
	\end{equation}
Thus $f\in \BV_{\loc}(\R^n;\R^n)$; for this, we can simply choose $\eps=1$.
On the other hand, using the fact that we can choose arbitrarily small $\eps>0$, we also see that
$\Vert Df\Vert$ is absolutely continuous with respect to $\mathcal L^n$.
In the Euclidean BV theory (see e.g. \cite[Section 3]{AFP}), this implies that in fact
$f\in W_{\loc}^{1,1}(\R^n;\R^n)$.

The measure  $\kappa$ given in \eqref{eq:choice of kappa} charges the set $E$,
which is $\sigma$-finite with respect to $\mathcal H^{n-1}$, just like the
approximate jump set $J_f$ of a BV function;
recall \eqref{eq:jump value 1}.
Thus the first line of \eqref{eq:Df estimate} appears to give an upper bound for the jump part
of the variation measure,
$\Vert Df\Vert^j=|f^+-f^-|\, d\mathcal H^{n-1}\mres{J_f}$ (recall \eqref{eq:jump part representation}).
In conclusion, the exceptional set $E$ can be interpreted to correspond
to the approximate jump set $J_f$ of the BV function $f$.
Since $f$ is now continuous by assumption, however, 
the approximate jump set is empty and so we actually obtained $f\in W_{\loc}^{1,1}(\R^n;\R^n)$.
\end{example}

\end{document}